\documentclass[11pt,a4paper]{article}

\pdfoutput=1

\usepackage[T1]{fontenc}
\usepackage[utf8]{inputenc}
\usepackage[english]{babel}
\usepackage{amsmath,amssymb,amsthm}
\usepackage[margin=2.6cm]{geometry}
\usepackage{array}
\usepackage{booktabs}
\usepackage{enumitem}
\usepackage[hidelinks]{hyperref}
\usepackage[expansion=false]{microtype}

\theoremstyle{plain}
\newtheorem{theorem}{Theorem}[section]
\newtheorem{proposition}[theorem]{Proposition}
\newtheorem{lemma}[theorem]{Lemma}
\newtheorem{corollary}[theorem]{Corollary}
\theoremstyle{definition}
\newtheorem{definition}[theorem]{Definition}
\newtheorem{example}[theorem]{Example}
\theoremstyle{remark}
\newtheorem{remark}[theorem]{Remark}

\newcommand{\R}{\mathbb{R}}
\newcommand{\norm}[1]{\left\lVert #1\right\rVert}
\newcommand{\abs}[1]{\left\lvert #1\right\rvert}
\newcommand{\LL}{L^{2}(\Omega)}
\newcommand{\HH}{H_{0}^{1}(\Omega)}
\newcommand{\HM}{H^{-1}(\Omega)}
\newcommand{\LLtwo}{L^{2}\bigl((0,T);L^{2}(\Omega)\bigr)}
\newcommand{\LLr}{L^{r}\bigl((0,T);L^{2}(\Omega)\bigr)}
\newcommand{\LLq}{L^{q}\bigl((0,T);L^{2}(\Omega)\bigr)}
\newcommand{\LHone}{L^{2}\bigl((0,T);H_{0}^{1}(\Omega)\bigr)}
\newcommand{\LHmone}{L^{2}\bigl((0,T);H^{-1}(\Omega)\bigr)}
\newcommand{\LinfL}{L^{\infty}\bigl((0,T);L^{2}(\Omega)\bigr)}
\newcommand{\Wscale}{W^{\alpha}_{2,2}\bigl((0,T);H_{0}^{1}(\Omega),H^{-1}(\Omega)\bigr)}
\newcommand{\WscaleG}{W^{\alpha}_{2,2}(g)}

\newcommand{\IL}[1]{I^{#1}_{0+}}            
\newcommand{\IR}[1]{I^{#1}_{T-}}            
\newcommand{\DL}{D^{\alpha}_{0+}}           
\newcommand{\RLD}{D^{\alpha}_{T-}}          
\newcommand{\CD}{{}^{C}\!D^{\alpha}_{0+}}   

\newcommand{\Hone}{\textup{(H1)}}
\newcommand{\Htwo}{\textup{(H2)}}
\newcommand{\Hthree}{\textup{(H3)}}
\newcommand{\Hthreep}{\textup{(H3$'$)}}
\newcommand{\Hfour}{\textup{(H4)}}

\numberwithin{equation}{section}

\begin{document}

\title{\bfseries Source integrability and the frontier $\alpha=\tfrac12$\\
in time-fractional diffusion}

\author{Edison A. V\'asquez-Mart\'inez\thanks{Instituto de Matem\'aticas, Universidad de
Antioquia, Medell\'in, Colombia. \texttt{edison.vasquez@udea.edu.co}.
ORCID: 0009-0003-6659-1122.}
\qquad
Mauricio A. Londo\~no-Arboleda\thanks{Instituto de Matem\'aticas, Universidad de Antioquia,
Medell\'in, Colombia. \texttt{alejandro.londono@udea.edu.co}.
ORCID: 0000-0002-2913-9044.}}

\date{October 2026}

\maketitle

\begin{abstract}
Let $\Omega\subset\R^{d}$ be a bounded Lipschitz domain, let $\alpha\in(0,1)$, and let
$\partial_{t}^{\alpha}$ denote the Caputo derivative. For
$\partial_{t}^{\alpha}u-\nabla\cdot\bigl(A(x)\nabla u\bigr)=f$ with homogeneous Dirichlet
condition and initial datum $g\in L^{2}(\Omega)$, we identify which of the thresholds appearing
in the energy method belong to the method and which to the equation. We compare
$f\in L^{2}\bigl((0,T);L^{2}(\Omega)\bigr)$ and $f\in L^{r}\bigl((0,T);L^{2}(\Omega)\bigr)$ with
$r>2/\alpha$; in the subdiffusive range the second implies the first and not conversely.
Existence is not at issue and we claim none of it: it is known, for both hypotheses and under
weaker ones, from the work of Zacher. What we determine is the exact set of exponents each route
admits and what the exponents left outside it cost. Under the weaker hypothesis the construction
below reaches $\alpha\in(\frac12,1)$, under the stronger one every $\alpha\in(0,1)$, and in both
cases the solution lies in
$L^{\infty}\bigl((0,T);L^{2}(\Omega)\bigr)$, by the mode representation and not by the energy
route, which yields no such bound under the weaker hypothesis. The restriction $\alpha>\frac12$
comes from the compactness criterion, whose admissible exponents form $[1,2\alpha)$; the exponent
$r>2/\alpha$ is exactly what the Gr\"onwall step consumes. A concentrating family shows that
nothing weaker suffices along that route, and the mode representation lowers the requirement to
$1/\alpha$. The frontier $\alpha=\frac12$ belongs to the equation: for every
$\alpha\in(0,\frac12]$ there is $f\in L^{2}\bigl((0,T);L^{2}(\Omega)\bigr)$ whose weak solution
leaves $L^{\infty}\bigl((0,T);L^{2}(\Omega)\bigr)$. Uniqueness and Hadamard well-posedness
follow.
\end{abstract}

\noindent\textbf{Keywords.} Time-fractional diffusion equation; Caputo derivative; weak solution;
Galerkin approximation; fractional Gr\"onwall inequality; Mittag-Leffler function; sharp
regularity threshold; well-posedness.

\medskip
\noindent\textbf{2020 Mathematics Subject Classification.} Primary 35R11; Secondary 26A33, 35A01,
35A02, 35B30, 35D30, 45D05, 45K05.

\section{The problem}\label{sec:problem}

\subsection{The equation}

Let $T>0$ be a fixed horizon. For
$\alpha\in(0,1)$ we study the time-fractional diffusion problem
\begin{equation}\label{eq:P}
(P_{\alpha})\qquad
\begin{cases}
\partial_{t}^{\alpha}u-\nabla\cdot\bigl(A(x)\nabla u\bigr)=f, & (x,t)\in\Omega\times(0,T),\\[2pt]
u=0, & (x,t)\in\partial\Omega\times[0,T],\\[2pt]
u(\cdot,0)=g, & \text{in }\Omega,
\end{cases}
\end{equation}
where $\partial^{\alpha}_{t}$ is the Caputo derivative of Definition~\ref{def:caputo} below, and
under the following hypotheses:

\begin{itemize}[leftmargin=3.4em]
\item[\Hone] $\Omega\subset\R^{d}$ is a bounded domain with Lipschitz boundary.
\item[\Htwo] $A:\Omega\to\R^{d\times d}$ is measurable, symmetric for a.e. $x$, bounded and
uniformly elliptic: there are constants $a_{1}\ge a_{0}>0$ such that, for a.e. $x\in\Omega$ and
every $\xi\in\R^{d}$,
\[
A(x)\xi\cdot\xi\ge a_{0}\abs{\xi}^{2},
\qquad
\abs{A(x)\xi}\le a_{1}\abs{\xi} .
\]
\end{itemize}

We impose exactly one initial condition, as the order requires: it is the datum that appears in
the fundamental theorem of fractional calculus. Equations of this type model anomalous transport
in disordered and porous media, where the mean-square displacement grows like $t^{\alpha}$
instead of linearly. We fix the order once and for all in $(0,1)$, and we state and claim
nothing outside that range.

Existence and uniqueness for \eqref{eq:P} are known. Zacher \cite{zacher09} proved both in an
abstract Hilbert-space formulation, for every $\alpha\in(0,1)$, with merely bounded measurable
coefficients and under the weaker hypothesis $f\in L^{2}\bigl((0,T);\HM\bigr)$, see
\cite{zacher19}. Kubica and Yamamoto \cite{kubica} recast the theory in
the scale $W^{\alpha}_{2,2}$ used here and extended it to time-dependent coefficients, and
Sakamoto and Yamamoto \cite{sakamoto} treated the problem through the spectral representation.
Compactness in this setting rests on the criteria of Li and Liu \cite{liliu}, which replace the
Aubin--Lions--Simon lemma \cite{simon,lions} when the time derivative has order $\alpha<1$.
Regularity of a different kind is studied by Loreti and Sforza \cite{loreti}, and smoothing
estimates under pointwise weighted bounds on the source by McLean, Mustapha, Ali and Knio
\cite{mclean}.

Several recent works address the question of how integrable the source has to be. Kubica,
Ryszewska and Zacher \cite{kubicaRZ} prove a weak Harnack inequality and interior H\"older
continuity for $\partial_{t}(k*(u-u_{0}))-\nabla\cdot(A(t,x)\nabla u)=f$ with a general kernel,
admitting $f$ in a mixed Lebesgue space $L^{q_{1}}\bigl((0,T);L^{q_{2}}(\Omega)\bigr)$. Floridia,
G\"olgeleyen and Yamamoto \cite{floridia} construct strong solutions in Banach spaces for
$F\in L^{q}(0,T;X)$. In the numerical analysis of the same equation the integrability of the
source in time governs the attainable order of convergence, and sources singular in time are
treated separately there \cite{zhoutian,shichen}. None settles the questions we answer below, and it is worth
stating at the outset what those are, so that the nature of the contribution is not mistaken.
Existence under either hypothesis is known, and none of it is claimed here. What we establish is,
first, the exact window $[1,2\alpha)$ of exponents with which the compactness criterion can be
applied under \Hthree, both halves of that identification being proved; second, that $2/\alpha$ is
precisely the integrability the energy route consumes, while the conclusion it is used to reach
requires only $1/\alpha$; and third, that the threshold $\alpha=\frac12$ which follows is no
artefact of either argument, since for every $\alpha\in(0,\frac12]$ there is a source satisfying
\Hthree{} whose solution leaves $\LinfL$.

In addition, we will consider the following hypotheses for the source term and the initial
condition:

\begin{itemize}[leftmargin=3.4em]
\item[\Hthree] $f\in\LLtwo$.
\item[\Hthreep] $f\in\LLr$ for some $r>2/\alpha$.
\item[\Hfour] $g\in\LL$.
\end{itemize}

In all of the works just cited the source is taken in $\LLtwo$, or in $L^{2}\bigl((0,T);\HM\bigr)$,
or subject to a weighted pointwise bound, and the estimates the argument needs are then read off
from that choice. The direct route, which would bound the fractional average of the source under
\Hthree{} alone, fails for the reason Proposition~\ref{prop:nobound} makes precise.

\subsection{Plan of the paper}

The paper is organised as follows. Section~\ref{sec:hyp} comments on the hypotheses stated above
and Section~\ref{sec:relation} orders the two on the source. Section~\ref{sec:common} collects the common development: Galerkin scheme, energy inequality,
compactness, uniqueness, and the spectral bound on the approximations.
Sections~\ref{sec:caseA} and~\ref{sec:caseB} prove the two theorems,
Section~\ref{sec:comparison} compares them, and Section~\ref{sec:sharp} settles which thresholds
belong to the problem. Section~\ref{sec:examples} contains one example and
Section~\ref{sec:conclusions} the conclusions. The remaining subsections of the present section
fix notation, the fractional operators, and the notion of weak solution.

We treat \Hthree{} first because it is the weaker hypothesis, so that the order exhibits what
can be obtained from the weaker assumption before any of the cost of strengthening it is
incurred.

\subsection{Notation}

We write $(\cdot,\cdot)_{\LL}$ for the inner product of $\LL$ and $\langle\cdot,\cdot\rangle$ for
the $\HM$--$\HH$ duality, and set
\begin{equation}\label{eq:bilinear}
a(u,v):=\int_{\Omega}A(x)\nabla u\cdot\nabla v\,dx,\qquad u,v\in\HH .
\end{equation}
On $\HH$ we use $\norm{u}_{\HH}=\norm{\nabla u}_{\LL}$, equivalent to the usual norm by
Poincar\'e's inequality, whose constant is written $C_{P}$, so that
$\norm{u}_{\LL}\le C_{P}\norm{u}_{\HH}$. Under \Hone{} and \Htwo{} the form
$a$ is coercive and bounded,
\begin{equation}\label{eq:coercive}
a(u,u)\ge a_{0}\norm{u}^{2}_{\HH},
\qquad
\abs{a(u,v)}\le a_{1}\norm{u}_{\HH}\norm{v}_{\HH},
\end{equation}
where $a_{0}$ is the ellipticity constant of \Htwo{} and $a_{1}$ the bound of the coefficient.
The form is symmetric, $a(u,v)=a(v,u)$, because $A(x)$ is; symmetry enters only once, in
Lemma~\ref{lem:spectral}, but there it is decisive.

We always write $\alpha$ for the order of the equation. The letters $\gamma$ and $\delta$ denote
no quantity of the problem: they are the dummy variables in which we state the auxiliary results
and which we instantiate later with concrete values. We write $\varepsilon$ for the free
parameter of Young's inequality in \eqref{eq:diffineq}, which the Gr\"onwall step carries into the
stability constant of Theorem~\ref{thm:stability}, and $\varrho$ for the radius of the
concentrating families of Sections~\ref{sec:source} and~\ref{sec:windowA}; the two are kept
apart. Finally, we write $C$ for a
constant that may change from line to line and that depends only on $T$, $\alpha$, $a_{0}$,
$a_{1}$, $C_{P}$ and the norms of the data, never on the index $n$ of the Galerkin approximation.

Fractional operators are always written with their argument and their point of evaluation
displayed, as in $\bigl(\IL{\gamma}\psi\bigr)(t)$. The two operators of
Section~\ref{sec:operators} differ only in the endpoint over which they integrate, and
suppressing either the argument or the point would make them indistinguishable.

\subsection{The fractional operators}\label{sec:operators}

We use the classical definitions of Kilbas, Srivastava and Trujillo \cite{kilbas}, specialised to
the finite interval $[0,T]$ and to real orders, which is all we need, and we keep their
subscripts $0+$ and $T-$.

\begin{definition}[Riemann--Liouville fractional integrals]\label{def:RL}
Let $\gamma>0$ and let $\psi\in L^{1}(0,T)$. The left- and right-sided Riemann--Liouville
fractional integrals of order $\gamma$ are
\begin{align}
\bigl(\IL{\gamma}\psi\bigr)(t)&:=\frac{1}{\Gamma(\gamma)}\int_{0}^{t}
\frac{\psi(s)}{(t-s)^{1-\gamma}}\,ds,\qquad\text{a.e. }t\in(0,T],\label{eq:RLleft}\\[2pt]
\bigl(\IR{\gamma}\psi\bigr)(t)&:=\frac{1}{\Gamma(\gamma)}\int_{t}^{T}
\frac{\psi(s)}{(s-t)^{1-\gamma}}\,ds,\qquad\text{a.e. }t\in[0,T),\label{eq:RLright}
\end{align}
which are \cite[Eqs.~(2.1.1) and~(2.1.2)]{kilbas}, and we set
$\IL{0}:=\IR{0}:=\mathrm{Id}$. Both extend componentwise to functions with values in a Banach
space. Only $\IL{\gamma}$ acts on the solution; $\IR{\gamma}$ acts on test functions and appears
solely in \eqref{eq:intbyparts} and in the terms that identity produces.
\end{definition}

\begin{lemma}[Properties of the fractional integrals]\label{lem:RLprops}
Let $\gamma,\gamma'>0$ and let $p\in[1,\infty]$. Then
\begin{enumerate}[label=\textup{(\roman*)},leftmargin=2.4em]
\item $\IL{\gamma}$ and $\IR{\gamma}$ map $L^{p}(0,T)$ into itself, and
\begin{equation}\label{eq:RLbounded}
\norm{\IL{\gamma}\psi}_{L^{p}(0,T)}\le\frac{T^{\gamma}}{\Gamma(\gamma+1)}
\norm{\psi}_{L^{p}(0,T)} ;
\end{equation}
\item $\IL{\gamma}\IL{\gamma'}=\IL{\gamma+\gamma'}$ and
$\IR{\gamma}\IR{\gamma'}=\IR{\gamma+\gamma'}$ on $L^{p}(0,T)$;
\item $\IL{\gamma}$ is linear and order preserving, and
$\bigl(\IL{\gamma}\psi\bigr)(t)$ depends only on $\psi|_{(0,t)}$; in particular $\IL{\gamma}\psi$
is unchanged on $(0,T_{1})$ if $\psi$ is modified on $(T_{1},T)$;
\item if $\varphi\in L^{\infty}(0,T)$ and $\psi\in L^{1}(0,T)$, then
\begin{equation}\label{eq:intbypartsI}
\int_{0}^{T}\varphi(t)\,\bigl(\IL{\gamma}\psi\bigr)(t)\,dt
=\int_{0}^{T}\psi(t)\,\bigl(\IR{\gamma}\varphi\bigr)(t)\,dt .
\end{equation}
\end{enumerate}
\end{lemma}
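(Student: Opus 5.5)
All four properties follow from one observation: $\IL{\gamma}\psi=k_{\gamma}*\psi$ is a convolution on $(0,T)$ with the locally integrable kernel $k_{\gamma}(t):=t^{\gamma-1}/\Gamma(\gamma)$, extended by zero for $t\le0$. The right-sided operator is the same object after the reflection $(R\psi)(t):=\psi(T-t)$, since $\IR{\gamma}=R\,\IL{\gamma}R$. For this reason I will prove each statement for $\IL{\gamma}$ and transfer it to $\IR{\gamma}$ by $R$, which is an isometry of every $L^{p}(0,T)$. The only analytic tool needed is Tonelli--Fubini, applied to nonnegative integrands first and then to $\abs{\psi}$ to license the signed computations.

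For (i), extend $\psi$ by zero outside $(0,T)$, so that $\IL{\gamma}\psi=(k_{\gamma}\mathbf 1_{(0,T)})*\psi$ on $(0,T)$. Young's convolution inequality gives
\[
\norm{\IL{\gamma}\psi}_{L^{p}(0,T)}\le\norm{k_{\gamma}}_{L^{1}(0,T)}\norm{\psi}_{L^{p}(0,T)},\qquad \norm{k_{\gamma}}_{L^{1}(0,T)}=\frac{T^{\gamma}}{\gamma\Gamma(\gamma)}=\frac{T^{\gamma}}{\Gamma(\gamma+1)} .
\]
For $1<p<\infty$ one can equally use Minkowski's integral inequality, and the cases $p=1,\infty$ are direct. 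This also shows that the defining integral converges absolutely for a.e. $t$.

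For (ii), Tonelli applied to $\abs{\psi}$ justifies exchanging the order of integration in $\IL{\gamma}\IL{\gamma'}\psi(t)$, which becomes $\int_{0}^{t}\psi(\tau)\,(k_{\gamma}*k_{\gamma'})(t-\tau)\,d\tau$. The Beta integral, obtained by substituting $s=\tau+\theta(t-\tau)$, gives $k_{\gamma}*k_{\gamma'}=k_{\gamma+\gamma'}$, because $B(\gamma,\gamma')=\Gamma(\gamma)\Gamma(\gamma')/\Gamma(\gamma+\gamma')$. I expect this to be the only step requiring care: the main obstacle is checking the absolute convergence that licenses Fubini. That convergence is exactly (i) applied to $\abs{\psi}$ with $p=1$. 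The a.e. qualifier is harmless because both sides define the same $L^{p}$ class.

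For (iii), linearity is immediate. Order preservation holds because $k_{\gamma}\ge0$, so $\psi\le\phi$ a.e. implies $\IL{\gamma}\psi\le\IL{\gamma}\phi$ a.e. The causality statement is read off directly from the domain of integration $(0,t)$ in \eqref{eq:RLleft}.

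For (iv), consider the product $\varphi(t)\psi(s)k_{\gamma}(t-s)\mathbf 1_{\{0<s<t<T\}}$. Its absolute value has integral at most $\norm{\varphi}_{L^{\infty}}\norm{\IL{\gamma}\abs{\psi}}_{L^{1}}<\infty$ by (i) with $p=1$. Fubini then allows integration in $t$ over $(s,T)$ first, which produces $\bigl(\IR{\gamma}\varphi\bigr)(s)$ and hence \eqref{eq:intbypartsI}. The right-sided half of each statement in (i) and (ii) then follows from the reflection identity $\IR{\gamma}=R\,\IL{\gamma}R$.
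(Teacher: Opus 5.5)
Your argument is correct: the convolution structure with Young's inequality, Tonelli--Fubini justified through the finiteness of $\IL{\gamma+\gamma'}\abs{\psi}$ and $\IL{\gamma}\abs{\psi}$, the Beta integral $k_{\gamma}*k_{\gamma'}=k_{\gamma+\gamma'}$, and the reflection $\IR{\gamma}=R\,\IL{\gamma}R$ together give every claim, including the constant $T^{\gamma}/\Gamma(\gamma+1)$. The paper does not write these proofs out; it cites Kilbas, Srivastava and Trujillo (Lemmas~2.1, 2.3 and~2.7) for (i), (ii) and (iv) and reads (iii) off the domain of integration exactly as you do, so your proposal is the standard argument behind those citations, made self-contained.
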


\begin{proof}
Parts (i), (ii) and (iv) are \cite[Lemmas~2.1, 2.3 and~2.7]{kilbas}; see also
\cite[Ch.~1, \S\S2--3]{samko}, \cite[Sec.~2.3]{podlubny}, \cite[Ch.~2]{gripenberg}. Part (iii) is
immediate from \eqref{eq:RLleft}, the integration running over $(0,t)$ and over nothing else.
\end{proof}

\begin{definition}[Riemann--Liouville fractional derivatives]\label{def:RLder}
Let $\alpha\in(0,1)$. For $y$ with $\IL{1-\alpha}y\in AC([0,T])$, and for $\phi$ with
$\IR{1-\alpha}\phi\in AC([0,T])$,
\begin{align}
\bigl(\DL y\bigr)(t)&:=\frac{d}{dt}\bigl(\IL{1-\alpha}y\bigr)(t)
=\frac{1}{\Gamma(1-\alpha)}\frac{d}{dt}\int_{0}^{t}\frac{y(s)}{(t-s)^{\alpha}}\,ds,
\label{eq:RLderleft}\\[2pt]
\bigl(\RLD\phi\bigr)(t)&:=-\frac{d}{dt}\bigl(\IR{1-\alpha}\phi\bigr)(t)
=-\frac{1}{\Gamma(1-\alpha)}\frac{d}{dt}\int_{t}^{T}\frac{\phi(s)}{(s-t)^{\alpha}}\,ds ,
\label{eq:RLderright}
\end{align}
the operators of \cite[Sec.~2.1]{kilbas}. The definitions extend componentwise to Banach-space
valued functions.
\end{definition}

\begin{definition}[Caputo derivative]\label{def:caputo}
Let $X$ be a Banach space, $\alpha\in(0,1)$, and let $u:[0,T]\to X$ be such that $u(0)$ is
defined. Following \cite[Eqs.~(2.4.1)--(2.4.5)]{kilbas} we set
\begin{equation}\label{eq:caputodef}
\partial^{\alpha}_{t}u:=\DL\bigl[u-u(0)\bigr],
\end{equation}
which is the Caputo derivative $\CD u$ of \cite{kilbas}; we keep the shorter symbol
$\partial^{\alpha}_{t}$ throughout.
\end{definition}

\begin{lemma}[Identities for the Caputo derivative]\label{lem:caputoprops}
Let $\alpha\in(0,1)$ and $u\in AC\bigl([0,T];X\bigr)$. Then $\partial^{\alpha}_{t}u$ exists a.e.
and
\begin{align}
\bigl(\partial^{\alpha}_{t}u\bigr)(t)&=\bigl(\IL{1-\alpha}u'\bigr)(t)
=\frac{1}{\Gamma(1-\alpha)}\int_{0}^{t}\frac{u'(s)}{(t-s)^{\alpha}}\,ds ,\label{eq:caputo}\\[2pt]
\bigl(\DL u\bigr)(t)&=\bigl(\partial^{\alpha}_{t}u\bigr)(t)
+\frac{t^{-\alpha}}{\Gamma(1-\alpha)}\,u(0),\label{eq:RLvsC}\\[2pt]
\bigl(\IL{\alpha}\partial^{\alpha}_{t}u\bigr)(t)&=u(t)-u(0),\qquad t\in[0,T].\label{eq:ftfc}
\end{align}
\end{lemma}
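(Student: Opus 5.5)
The plan is to reduce everything to the scalar Riemann--Liouville calculus of Lemma~\ref{lem:RLprops}. The reduction rests on the absolute continuity of $u$ on $[0,T]$ with values in $X$. We assume $X$ has the Radon--Nikod\'ym property, which holds for the Hilbert spaces in which the lemma is applied. Then $u'\in L^{1}\bigl((0,T);X\bigr)$ exists and $u(t)=u(0)+\int_{0}^{t}u'(s)\,ds$. Each identity can be tested against functionals $x^{*}\in X^{*}$ or proved directly with Bochner integrals; we take the latter route, since Fubini's theorem holds for Bochner integrals.

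For \eqref{eq:caputo} we compute $\IL{1-\alpha}\bigl[u-u(0)\bigr]$. Writing $u(s)-u(0)=\int_{0}^{s}u'(\tau)\,d\tau$ and exchanging the order of integration, we get
\[
\bigl(\IL{1-\alpha}[u-u(0)]\bigr)(t)=\bigl(\IL{1-\alpha}\IL{1}u'\bigr)(t)=\bigl(\IL{1}\IL{1-\alpha}u'\bigr)(t)=\int_{0}^{t}\bigl(\IL{1-\alpha}u'\bigr)(s)\,ds .
\]
This is the semigroup property, part (ii) of Lemma~\ref{lem:RLprops}, applied with $\gamma=1$, and it is valid for $L^{1}$ functions. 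By part (i), $\IL{1-\alpha}u'\in L^{1}\bigl((0,T);X\bigr)$. The right-hand side is therefore absolutely continuous, so the definition of $\DL$ applies. Its derivative equals $\IL{1-\alpha}u'$ almost everywhere, by the Lebesgue differentiation theorem for Bochner integrals. This gives existence a.e. and \eqref{eq:caputo}.

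For \eqref{eq:RLvsC} we use linearity: $\DL u=\DL[u-u(0)]+\DL[u(0)]$. The constant term is explicit, since $\bigl(\IL{1-\alpha}1\bigr)(t)=t^{1-\alpha}/\Gamma(2-\alpha)$. Differentiating gives $t^{-\alpha}/\Gamma(1-\alpha)$ for $t>0$, by the identity $\Gamma(2-\alpha)=(1-\alpha)\Gamma(1-\alpha)$. One technical point: $t^{1-\alpha}$ is absolutely continuous on $[0,T]$, so $\IL{1-\alpha}u$ is absolutely continuous as well and $\DL u$ is defined in the sense of Definition~\ref{def:RLder}.

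For \eqref{eq:ftfc} we apply $\IL{\alpha}$ to \eqref{eq:caputo}. By the semigroup property,
\[
\IL{\alpha}\IL{1-\alpha}u'=\IL{1}u'=u(t)-u(0).
\]
A priori this equality holds for a.e.\ $t$. Both sides are continuous, however: the right side because $u\in AC$, and the left side because it equals $\IL{1}u'$. Hence the identity holds for every $t\in[0,T]$.

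The main obstacle is justifying that the derivative of $\int_{0}^{t}\IL{1-\alpha}u'$ equals the integrand a.e., and the use of Fubini, in the vector-valued setting. This is where the Radon--Nikod\'ym assumption on $X$, or a reduction to $\LL$, $\HH$ or $\HM$, is needed. Once it is settled, the remaining steps are the standard scalar computations.
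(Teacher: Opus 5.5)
Your proof is correct, but it is not the paper's proof in form. The paper treats \eqref{eq:RLvsC} exactly as you do, by linearity and the value of $\DL$ on a constant. For \eqref{eq:caputo} and \eqref{eq:ftfc} it only cites the scalar results of Kilbas, Srivastava and Trujillo. You instead derive both from the semigroup property of Lemma~\ref{lem:RLprops}(ii) and Fubini for Bochner integrals.

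What your route buys is a genuinely $X$-valued argument, and it makes explicit a hypothesis the statement tacitly needs. In a general Banach space an absolutely continuous function may be nowhere differentiable; $t\mapsto\mathbf{1}_{[0,t]}$ in $L^{1}(0,1)$ is an example. So the $u'$ in \eqref{eq:caputo} presupposes either the Radon--Nikod\'ym property of $X$, or the reading $u=u(0)+\IL{1}v$ with $v\in L^{1}\bigl((0,T);X\bigr)$. The paper's appeal to scalar results does not address this. It is harmless for the Hilbert spaces and for $\R$, where the lemma is actually used.

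Two small corrections.

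\emph{Where the Radon--Nikod\'ym property enters.} Your closing paragraph misplaces it. Fubini and the Lebesgue differentiation theorem hold for Bochner integrals in every Banach space. The Radon--Nikod\'ym property is needed only to produce $u'$ with $u=u(0)+\IL{1}u'$, as you correctly said at the start.

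\emph{The upgrade to every $t$ in \eqref{eq:ftfc}.} As written it is circular. You cannot infer continuity of the left side from an equality that is known only almost everywhere. Nor is $\IL{\alpha}\psi$ automatically continuous, or even finite everywhere, for $\psi\in L^{1}$; compare Remark~\ref{rem:endpoint}. Argue instead as follows. Tonelli and the Beta integral $\int_{\tau}^{t}(t-s)^{\alpha-1}(s-\tau)^{-\alpha}\,ds=\Gamma(\alpha)\Gamma(1-\alpha)$ give $\IL{\alpha}\IL{1-\alpha}\norm{u'}_{X}=\IL{1}\norm{u'}_{X}<\infty$ at every $t$. Fubini then yields $\IL{\alpha}\IL{1-\alpha}u'=\IL{1}u'=u-u(0)$ pointwise on $[0,T]$.
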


\begin{proof}
Identity \eqref{eq:caputo} is \cite[Eqs.~(2.4.1)--(2.4.5)]{kilbas}, where it is a theorem and not
the definition: on $AC$ the two coincide. Identity \eqref{eq:RLvsC} follows from
\eqref{eq:caputodef} and $\bigl(\DL c\bigr)(t)=c\,t^{-\alpha}/\Gamma(1-\alpha)$ for a constant
$c$, and \eqref{eq:ftfc} is the fundamental theorem of fractional calculus,
\cite[Lemma~2.22]{kilbas}; see also \cite[Ch.~3]{diethelm}, \cite[Sec.~2.4]{podlubny}.
\end{proof}

Identity \eqref{eq:RLvsC} separates the two possible readings of $\partial^{\alpha}_{t}$ on the
scale of Definition~\ref{def:space}. The function $t\mapsto t^{-\alpha}$ belongs to $L^{2}(0,T)$
if and only if $\alpha<\frac12$; see Remark~\ref{rem:pairunique}.

We let $\RLD$ act only on test functions and never on the solution: by \eqref{eq:RLderright} its
value at $t$ is determined by $\phi$ on $[t,T]$. Differentiating under the integral sign, which
is licit because $\phi\in C^{\infty}_{0}\bigl((0,T);\HH\bigr)$ has compact support, and using
$\phi(T)=0$, we obtain
\begin{equation}\label{eq:RLDrep}
\bigl(\RLD\phi\bigr)(t)=-\frac{1}{\Gamma(1-\alpha)}\int_{0}^{T-t}
\sigma^{-\alpha}\,\phi'(t+\sigma)\,d\sigma ,
\end{equation}
Hence $\IR{1-\alpha}\phi\in C^{1}\bigl([0,T];\HH\bigr)$, and the function $\RLD\phi$ is
continuous on $[0,T]$ and vanishes to the right of the support of $\phi$. Moreover
\begin{equation}\label{eq:rightRLbound}
\norm{\RLD\phi}_{L^{\infty}((0,T);\HH)}
\le\frac{T^{1-\alpha}}{\Gamma(2-\alpha)}\norm{\partial_{t}\phi}_{L^{\infty}((0,T);\HH)} .
\end{equation}
In particular $\RLD\phi\in L^{q}\bigl((0,T);\HH\bigr)$ for every $q\in[1,\infty]$, a fact we use
in the passage to the limit.

\begin{lemma}[Fractional integration by parts for
\texorpdfstring{$\partial^{\alpha}_{t}$}{the Caputo derivative}]\label{lem:ibp}
Let $v\in AC\bigl([0,T];\LL\bigr)$ and $\phi\in C^{\infty}_{0}\bigl((0,T);\HH\bigr)$. Then
\begin{equation}\label{eq:intbyparts}
\int_{0}^{T}\bigl\langle\partial^{\alpha}_{t}v,\phi\bigr\rangle\,dt
=\int_{0}^{T}\bigl\langle v,\RLD\phi\bigr\rangle\,dt
-\bigl\langle v(0),\bigl(\IR{1-\alpha}\phi\bigr)(0)\bigr\rangle .
\end{equation}
\end{lemma}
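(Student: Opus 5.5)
The plan is to reduce the identity to the scalar integration by parts of Lemma~\ref{lem:RLprops}(iv), applied to $\psi=(v'(t),\phi(t))$, and then to integrate by parts once more in classical time; the boundary term at $t=0$ is what produces the trace term. By \eqref{eq:caputo}, since $v\in AC([0,T];\LL)$, we have $\partial^{\alpha}_{t}v=\IL{1-\alpha}v'$ with $v'\in L^{1}\bigl((0,T);\LL\bigr)$. Because $\partial^\alpha_t v(t)\in\LL$ and $\phi(t)\in\HH\subset\LL$, the duality pairing reduces to the $\LL$ inner product. Expanding $\phi$ (or testing componentwise against an orthonormal basis of $\LL$, which is legitimate since $\phi$ is smooth with values in $\HH$), I will first prove the vector-valued analogue of \eqref{eq:intbypartsI}:
\[
\int_{0}^{T}\bigl(\IL{1-\alpha}v',\phi\bigr)_{\LL}\,dt=\int_{0}^{T}\bigl(v'(t),(\IR{1-\alpha}\phi)(t)\bigr)_{\LL}\,dt .
\]
The cleanest route is to write both sides as the double integral of $\Gamma(1-\alpha)^{-1}(t-s)^{-\alpha}\bigl(v'(s),\phi(t)\bigr)_{\LL}$ over $\{0<s<t<T\}$ and apply Fubini. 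Absolute integrability follows from $\norm{v'}_{L^{1}(\LL)}\norm{\phi}_{L^{\infty}(\LL)}\,T^{1-\alpha}/\Gamma(2-\alpha)$, exactly as in the proof of \eqref{eq:RLbounded} with $p=1$.

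Next I integrate by parts in $t$ on the right-hand side. By the remark after \eqref{eq:RLDrep}, $w:=\IR{1-\alpha}\phi\in C^{1}\bigl([0,T];\HH\bigr)$, and by \eqref{eq:RLderright} we have $w'=-\RLD\phi$. Since $t\mapsto (v(t),w(t))_{\LL}$ is absolutely continuous, as the product of an $AC$ function and a $C^{1}$ function, the product rule gives
\[
\int_{0}^{T}(v',w)_{\LL}\,dt=(v(T),w(T))_{\LL}-(v(0),w(0))_{\LL}+\int_{0}^{T}(v,\RLD\phi)_{\LL}\,dt .
\]
It remains to see that $w(T)=(\IR{1-\alpha}\phi)(T)=0$. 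This holds because the integral in \eqref{eq:RLright} runs over an empty interval at $t=T$; alternatively, $\phi$ vanishes near $T$, so $w$ vanishes to the right of $\operatorname{supp}\phi$. Rewriting the inner products as dualities gives \eqref{eq:intbyparts}.

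The main obstacle is justifying the product rule for $(v,w)_{\LL}$ and the claim $w\in C^{1}$ with $w'=-\RLD\phi$. The paper asserts both after \eqref{eq:RLDrep}, but I would verify them by the substitution $s=t+\sigma$: this gives $w(t)=\Gamma(1-\alpha)^{-1}\int_{0}^{T-t}\sigma^{-\alpha}\phi(t+\sigma)\,d\sigma$. Differentiating, the boundary contribution is $-(T-t)^{-\alpha}\phi(T)=0$, which leaves $\Gamma(1-\alpha)^{-1}\int_0^{T-t}\sigma^{-\alpha}\phi'(t+\sigma)\,d\sigma=-\RLD\phi$, in agreement with \eqref{eq:RLDrep}. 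Continuity of this expression follows by dominated convergence, since $\phi'$ is bounded and $\sigma^{-\alpha}$ is integrable. The product rule for an $AC$ function paired with a $C^1$ function is standard, via difference quotients or by density of $C^{1}$ in $AC$ in the $W^{1,1}$ norm.
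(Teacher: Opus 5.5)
Your proposal is correct and takes essentially the same route as the paper. The paper combines \eqref{eq:caputo}, Lemma~\ref{lem:RLprops}(iv) with $\gamma=1-\alpha$, a classical integration by parts using $\frac{d}{dt}\IR{1-\alpha}\phi=-\RLD\phi$ from \eqref{eq:RLDrep} and \eqref{eq:RLderright}, and the vanishing of $(\IR{1-\alpha}\phi)(T)$, citing Kubica--Yamamoto for the details; your Fubini bound, your check that $\IR{1-\alpha}\phi\in C^{1}\bigl([0,T];\HH\bigr)$, and your product-rule argument supply the steps the paper leaves implicit.
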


\begin{proof}
This is the integration by parts of \cite[Sec.~3]{kubica}; see also \cite[Ch.~2]{kubicabook}. It
combines \eqref{eq:caputo}, Lemma~\ref{lem:RLprops}(iv) with $\gamma=1-\alpha$, the classical
integration by parts, which \eqref{eq:RLDrep} makes available, and \eqref{eq:RLderright}. The
endpoint term at $T$ disappears because $\bigl(\IR{1-\alpha}\phi\bigr)(T)=0$.
\end{proof}

\begin{remark}[The datum survives the compact support]\label{rem:datumterm}
The term that \eqref{eq:intbyparts} leaves behind is the \emph{right-sided} integral evaluated at
the \emph{left} endpoint,
\[
\bigl(\IR{1-\alpha}\phi\bigr)(0)=\frac{1}{\Gamma(1-\alpha)}\int_{0}^{T}s^{-\alpha}\phi(s)\,ds ,
\]
which does not vanish although $\operatorname{supp}\phi\Subset(0,T)$; the left-sided integral
evaluated at $0$ would vanish identically, and with it the initial datum would disappear from the
weak formulation \eqref{eq:weak} and every solution would be independent of $g$.
\end{remark}

\subsection{The solution space and the weak formulation}

\begin{definition}[Solution space]\label{def:space}
Let $u\in\LHone$. We say that a pair $(\chi,u_{0})\in\LHmone\times\LL$ \emph{represents the
Caputo derivative of $u$ with initial value $u_{0}$} if
\begin{equation}\label{eq:spacedef}
\int_{0}^{T}\langle\chi,\phi\rangle\,dt
=\int_{0}^{T}\bigl\langle u,\RLD\phi\bigr\rangle\,dt
-\bigl\langle u_{0},\bigl(\IR{1-\alpha}\phi\bigr)(0)\bigr\rangle
\qquad\forall\,\phi\in C^{\infty}_{0}\bigl((0,T);\HH\bigr),
\end{equation}
and we then write $\chi=\partial^{\alpha}_{t}u$ and $u(0)=u_{0}$; by Lemma~\ref{lem:ibp} this is
consistent with Definition~\ref{def:caputo}, the classical pair qualifying whenever
$u\in AC\bigl([0,T];\LL\bigr)$. We set
\begin{equation}\label{eq:space}
\Wscale:=\bigl\{u\in\LHone:\ \text{some pair }(\chi,u_{0})\text{ satisfies }
\eqref{eq:spacedef}\bigr\},
\end{equation}
which is the scale of Kubica and Yamamoto \cite{kubica}, to which we refer for its completeness.
Alongside it we use, for a prescribed datum $g\in\LL$, the subset of \eqref{eq:space} on which
$g$ is the represented initial value: we write $\WscaleG$ for the set of those $u\in\LHone$ for
which \eqref{eq:spacedef} holds with the pair $(\chi,g)$ for some $\chi\in\LHmone$, and we norm it
by $\norm{u}_{\LHone}+\norm{\chi}_{\LHmone}$. On $\WscaleG$ that second summand is unambiguous for
every $\alpha\in(0,1)$, because by Remark~\ref{rem:pairunique} two admissible pairs sharing their
initial value share their first entry as well; on \eqref{eq:space} it is unambiguous only for
$\alpha\ge\frac12$. Every statement below that places a solution in a normed space places it in
$\WscaleG$, with the datum of \eqref{eq:P}, which is the datum the weak formulation
\eqref{eq:weak} prescribes, so that nothing is asserted about a norm the order could leave
undetermined.
\end{definition}

\begin{remark}[The pair, and a third occurrence of
\texorpdfstring{$\alpha=\frac12$}{alpha=1/2}]\label{rem:pairunique}
Suppose $(\chi,u_{0})$ and $(\tilde\chi,\tilde u_{0})$ both satisfy \eqref{eq:spacedef} for the
same $u$, and write $h:=u_{0}-\tilde u_{0}$. Subtracting the two identities,
\[
\int_{0}^{T}\langle\chi-\tilde\chi,\phi\rangle\,dt
=-\bigl\langle h,\bigl(\IR{1-\alpha}\phi\bigr)(0)\bigr\rangle
=-\frac{1}{\Gamma(1-\alpha)}\int_{0}^{T}t^{-\alpha}\langle h,\phi(t)\rangle\,dt ,
\]
that is $\chi-\tilde\chi=-t^{-\alpha}h/\Gamma(1-\alpha)$ in $\mathcal{D}'\bigl((0,T);\HM\bigr)$,
which is exactly the discrepancy \eqref{eq:RLvsC} between the Riemann--Liouville and the Caputo
readings. That function lies in $\LHmone$ if and only if $h=0$ or $\alpha<\frac12$. For $\alpha\in[\frac12,1)$ the pair is therefore determined by $u$ alone, and the norm on
\eqref{eq:space} is unambiguous. For $\alpha\in(0,\frac12)$ it is not: two pairs with different
initial values may represent one and the same $u$. Prescribing the initial value removes the
indeterminacy at every order, since $h=0$ forces $\chi=\tilde\chi$ whatever $\alpha$ may be, and
that is the reason the space $\WscaleG$ carries a norm for all $\alpha\in(0,1)$ while
\eqref{eq:space} does not. Prescribing it is also what the weak formulation \eqref{eq:weak} does,
the datum appearing on its right-hand side. The uniqueness proof of
Theorem~\ref{thm:uniqueness} is in any case independent of all this: it never uses the pair, only
the weak formulation itself.

We follow \cite{kubica} in building the scale on $\partial^{\alpha}_{t}$ rather than on $\DL$,
though the choice is not forced. The same computation shows what it buys. By \eqref{eq:RLvsC} the
Riemann--Liouville reading differs from the Caputo one by $t^{-\alpha}u(0)/\Gamma(1-\alpha)$,
which for $u(0)=g\neq0$ leaves $\LHmone$ exactly when $\alpha\ge\frac12$, so the solutions
constructed below would fall outside the space for every order above one half. This occurrence of
$\alpha=\frac12$ concerns the representation of the derivative, and is distinct from the two the
paper is about; see Section~\ref{sec:comparison}.
\end{remark}

\begin{definition}[Weak solution]\label{def:weak}
A function $u\in\WscaleG$ is a weak solution of \eqref{eq:P} if
\begin{equation}\label{eq:weak}
\int_{0}^{T}\!\!\int_{\Omega}u\,\RLD\phi\,dx\,dt
+\int_{0}^{T}\!\!\int_{\Omega}A\nabla u\cdot\nabla\phi\,dx\,dt
=\int_{0}^{T}\!\!\int_{\Omega}f\phi\,dx\,dt
+\int_{\Omega}g\,\bigl(\IR{1-\alpha}\phi\bigr)(x,0)\,dx
\end{equation}
for every $\phi\in C^{\infty}_{0}\bigl((0,T);\HH\bigr)$.
\end{definition}

We do not impose the initial datum as a trace, since a function of $\Wscale$ need not have one.
It enters instead through the last term of \eqref{eq:weak}, which is what the integration by
parts \eqref{eq:intbyparts} leaves behind, and which by Remark~\ref{rem:pairunique} prescribes
the initial value of the pair representing $\partial^{\alpha}_{t}u$.

\subsection{The hypotheses}\label{sec:hyp}

Hypothesis \Hone{} is what makes the embedding $\HH\hookrightarrow\LL$ compact, by
Rellich--Kondrachov \cite[Thm.~9.16]{brezis}, and hence what makes the spectral basis of
Lemma~\ref{lem:spectral} and the compactness argument of Section~\ref{sec:common} available.
Hypothesis \Htwo{} gives the coercivity and boundedness \eqref{eq:coercive}; the scalar
operator $-\nabla\cdot\bigl(C^{2}(x)\nabla u\bigr)$ with $C\in L^{\infty}(\Omega)$ bounded
below by $c_{0}>0$ is a particular case $A=C^{2}I$, with $a_{0}=c_{0}^{2}$ and
$a_{1}=\norm{C}^{2}_{L^{\infty}}$. Hypothesis \Hfour{}
is the minimal requirement for the datum term of \eqref{eq:weak} to make sense. The two alternatives
\Hthree{} and \Hthreep{} are compared in the next section. Note that \Hthreep{} is existential:
it fixes no exponent, and demands only that some exponent exceed the threshold $2/\alpha$.

\subsection{The relation between \texorpdfstring{\Hthree}{(H3)} and
\texorpdfstring{\Hthreep}{(H3')}}\label{sec:relation}

We work throughout in the range $0<\alpha<1$, and in that range the threshold $2/\alpha$ is
strictly larger than $2$. This single arithmetic fact orders the two hypotheses.

\begin{proposition}[Hierarchy of the source hypotheses]\label{prop:hierarchy}
Let $\alpha\in(0,1)$ and $T<\infty$.
\begin{enumerate}[label=\textup{(\roman*)},leftmargin=2.4em]
\item $\Hthreep\Longrightarrow\Hthree$: if $f\in\LLr$ with $r>2/\alpha$, then $f\in\LLtwo$ and
\begin{equation}\label{eq:inclusion}
\norm{f}_{\LLtwo}\le T^{\frac12-\frac1r}\norm{f}_{\LLr};
\end{equation}
\item $\Hthree\not\Longrightarrow\Hthreep$: if $w\in\LL$ has $\norm{w}_{\LL}=1$, the source
$f(x,t):=t^{-\alpha/2}w(x)$ satisfies \Hthree{} and lies in $\LLr$ for no $r>2/\alpha$.
\end{enumerate}
\end{proposition}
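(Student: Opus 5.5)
Part (i) is Hölder's inequality in time, applied to the scalar function $t\mapsto\norm{f(t)}_{\LL}$. Since $r>2/\alpha>2$, the exponents $p=r/2>1$ and $p'=r/(r-2)$ are conjugate. Hölder on $(0,T)$ then gives
\[
\int_{0}^{T}\norm{f(t)}_{\LL}^{2}\,dt\le\Bigl(\int_{0}^{T}1\,dt\Bigr)^{1-\frac2r}\Bigl(\int_{0}^{T}\norm{f(t)}_{\LL}^{r}\,dt\Bigr)^{\frac2r}
=T^{1-\frac2r}\norm{f}_{\LLr}^{2}.
\]
Taking square roots yields \eqref{eq:inclusion}, with exponent $\frac12-\frac1r$. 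Measurability of $f$ as an $\LL$-valued map is part of the hypothesis $f\in\LLr$ and is inherited. The only input is $r>2$, which follows from $r>2/\alpha$ and $\alpha<1$; this is the arithmetic fact singled out before the statement.

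Part (ii) is a direct computation. Since $\norm{w}_{\LL}=1$, we have $\norm{f(t)}_{\LL}=t^{-\alpha/2}$ for every $t\in(0,T]$. The map $t\mapsto t^{-\alpha/2}w$ is continuous from $(0,T]$ into $\LL$, so it is strongly measurable. Hence for any $p\in[1,\infty)$,
\[
\norm{f}^{p}_{L^{p}((0,T);\LL)}=\int_{0}^{T}t^{-\alpha p/2}\,dt ,
\]
which is finite if and only if $\alpha p/2<1$, that is $p<2/\alpha$. Taking $p=2$, the condition $2<2/\alpha$ holds because $\alpha<1$. The integral then equals $T^{1-\alpha}/(1-\alpha)$, so $f$ satisfies \Hthree. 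Taking $p=r$ with any $r>2/\alpha$ gives $\alpha r/2>1$, the integral diverges at $t=0$, and $f\notin\LLr$. Since \Hthreep{} only asks for some such $r$, and every $r>2/\alpha$ fails, \Hthreep{} fails.

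Neither part has a genuine obstacle. The one point needing care is the endpoint in (ii): the family $t^{-\alpha p/2}$ is integrable exactly for $p<2/\alpha$, so the example misses $L^{2/\alpha}$ itself as well. This is consistent with \Hthreep{} being stated with a strict inequality, and even shows slightly more, namely that $f\notin L^{r}\bigl((0,T);\LL\bigr)$ for all $r\ge 2/\alpha$.
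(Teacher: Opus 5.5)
Your proposal is correct and follows the paper's proof essentially verbatim: H\"older with $p=r/2$ and its conjugate exponent for (i), and the direct computation that $\int_{0}^{T}t^{-q\alpha/2}\,dt$ is finite exactly when $q<2/\alpha$ for (ii). Your remarks on strong measurability and on the endpoint $r=2/\alpha$ are harmless refinements that the paper leaves implicit.
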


\begin{proof}
(i) Since $\alpha<1$, every admissible exponent satisfies $r>2/\alpha>2$. We put $p:=r/2>1$ and
let $p'$ be its conjugate. H\"older's inequality on the bounded interval $(0,T)$, applied to
$\int_{0}^{T}\norm{f(t)}^{2}_{\LL}\cdot1\,dt$, gives
\[
\int_{0}^{T}\norm{f(t)}^{2}_{\LL}dt
\le\left(\int_{0}^{T}\norm{f(t)}^{2p}_{\LL}dt\right)^{1/p}T^{1/p'}
=\norm{f}^{2}_{\LLr}\,T^{1-2/r},
\]
which is \eqref{eq:inclusion} after taking square roots.

(ii) Here $\norm{f(t)}_{\LL}=t^{-\alpha/2}$, so that
$\norm{f}^{2}_{\LLtwo}=\int_{0}^{T}t^{-\alpha}dt<\infty$ because $\alpha<1$, and $f$ satisfies
\Hthree. For $q\ge1$ we have $\norm{f}^{q}_{L^{q}((0,T);\LL)}=\int_{0}^{T}t^{-q\alpha/2}dt$,
finite exactly when $q\alpha/2<1$, that is when $q<2/\alpha$. No exponent above the threshold is
therefore admissible, and \Hthreep{} fails for this $f$.
\end{proof}

Hypothesis \Hthree{} is therefore strictly weaker than \Hthreep, and two consequences will be
used repeatedly. Everything proved under \Hthree{} is available under
\Hthreep{} as well, so in the common development of Section~\ref{sec:common} we may assume
$f\in\LLtwo$ once and for all. The implication does not reverse, and the gap between the two
hypotheses has mathematical content. Section~\ref{sec:source} shows that for sources admitted by
\Hthree{} alone the quantity an energy argument must control is unbounded, and
Section~\ref{sec:sharp} that for $\alpha\le\frac12$ the conclusion $u\in\LinfL$ is false for some
of them.

\begin{remark}[Correction of a claim in the literature]\label{rem:correction}
It is sometimes asserted, for this pair of hypotheses, that neither implies the other, with a
function $f\in L^{r}$ having $1/\alpha<r<2$ offered as a witness against
$\Hthreep\Rightarrow\Hthree$. The assertion is false in the range considered here and the witness
does not qualify. Hypothesis \Hthreep{} requires $r>2/\alpha$, and since $\alpha<1$ this forces
$r>2$; an exponent below $2$ never satisfies \Hthreep{} in the first place, so no such function
is a counterexample to anything. For $\alpha\le\frac12$ the interval $(1/\alpha,2)$ is empty, so in half of the range the
proposed witness cannot be written down at all. The
correct statement is the one we prove in Proposition~\ref{prop:hierarchy}:
$\Hthreep\Rightarrow\Hthree$ always, and the converse fails. The hypotheses are ordered, and the
order is what dictates the arrangement of Sections~\ref{sec:caseA} and~\ref{sec:caseB}: the
weaker hypothesis first.
\end{remark}

That the two theorems we prove here are not ordered, although their hypotheses are, is a separate
matter, and we discuss it in Section~\ref{sec:comparison} once both have been proved.

\section{The common development}\label{sec:common}

Here we assemble, once, everything that both cases use: the analytic tools, the Galerkin scheme,
the energy inequality from which both routes start, the exact point at which they separate, the compactness machinery, uniqueness, and a bound on the approximations that the energy route
does not give. By Proposition~\ref{prop:hierarchy}(i) the weaker
hypothesis \Hthree{} holds in both cases, so we assume it throughout together with \Hone, \Htwo{}
and \Hfour, and we use nothing stronger anywhere in this section. Sections~\ref{sec:caseA}
and~\ref{sec:caseB} then have only to supply uniform bounds, each from its own hypothesis.

\subsection{Analytic tools}\label{sec:tools}

The only property of the kernel $(t-s)^{\gamma-1}$ we need is its integrability near
the singularity: a power $\sigma^{m}$ is integrable at the origin exactly when $m>-1$, and then
$\int_{0}^{a}\sigma^{m}d\sigma=a^{m+1}/(m+1)$. Applied to exponents fixed by the order of the
equation this is arithmetic. It becomes a genuine condition only twice, when the exponent carries
a free factor, and those two occasions are where the threshold $r>2/\alpha$ and the window
$p<2\alpha$ come from.

\begin{lemma}[Mittag-Leffler functions]\label{lem:ML}
For $\gamma,\delta>0$ let $E_{\gamma,\delta}(z):=\sum_{k\ge0}z^{k}/\Gamma(\gamma k+\delta)$, and
write $E_{\gamma}:=E_{\gamma,1}$. Then
\begin{enumerate}[label=\textup{(\roman*)},leftmargin=2.4em]
\item $E_{\gamma,\delta}$ is entire, hence bounded on compact subsets of $\R$;
\item if $\gamma\in(0,1]$ and $\delta\ge\gamma$, then $x\mapsto E_{\gamma,\delta}(-x)$ is
completely monotone on $[0,\infty)$, and in particular
\begin{equation}\label{eq:MLbound}
0<E_{\gamma,\delta}(-x)\le\frac{1}{\Gamma(\delta)}\qquad(x\ge0);
\end{equation}
\item for every $\lambda>0$,
\begin{equation}\label{eq:MLder}
\frac{d}{dt}E_{\gamma,1}\bigl(-\lambda t^{\gamma}\bigr)
=-\lambda\,t^{\gamma-1}E_{\gamma,\gamma}\bigl(-\lambda t^{\gamma}\bigr),\qquad t>0 .
\end{equation}
\end{enumerate}
\end{lemma}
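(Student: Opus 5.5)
Part (i) is immediate from the series. Since $\Gamma(\gamma k+\delta)$ grows faster than any geometric sequence, we use Stirling, $\log\Gamma(\gamma k+\delta)\sim\gamma k\log k$. The root test then gives $\abs{z^{k}/\Gamma(\gamma k+\delta)}^{1/k}\to0$ for every $z$. So the radius of convergence is infinite, $E_{\gamma,\delta}$ is entire, and continuity gives boundedness on compact subsets of $\R$.

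Part (iii) is a termwise differentiation, licit because the differentiated series converges locally uniformly on $(0,\infty)$ by the same Stirling estimate. The $k=0$ term is constant. For $k\ge1$ we use $\Gamma(\gamma k+1)=\gamma k\,\Gamma(\gamma k)$, which gives
\[
\frac{d}{dt}\frac{(-\lambda)^{k}t^{\gamma k}}{\Gamma(\gamma k+1)}
=\frac{(-\lambda)^{k}t^{\gamma k-1}}{\Gamma(\gamma k)} .
\]
Shifting $k\mapsto k+1$ turns the sum into $-\lambda t^{\gamma-1}\sum_{k\ge0}(-\lambda t^{\gamma})^{k}/\Gamma(\gamma k+\gamma)$, which is \eqref{eq:MLder}.

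Part (ii) is the substantive claim, and we would cite it rather than reprove it. Complete monotonicity of $E_{\gamma}(-x)$ for $\gamma\in(0,1]$ is Pollard's theorem. Its extension to $E_{\gamma,\delta}(-x)$ with $\delta\ge\gamma$ is due to Schneider and to Miller--Samko; see also Gorenflo--Kilbas--Mainardi--Rogosin. If a self-contained argument were wanted, we would proceed as follows.
\begin{itemize}[leftmargin=2.4em]
\item[1.] For $\delta=\gamma$, write $E_{\gamma,\gamma}(-x)$ as the Laplace transform of a nonnegative density. This is the Hankel-contour representation collapsed onto the negative real axis, where the integrand has the explicit sign $\sin(\pi\gamma)\ge0$.
\item[2.] For $\delta>\gamma$, obtain the general case from the identity
\[
E_{\gamma,\delta}(-x)=\frac{1}{\Gamma(\delta-\gamma)}\int_{0}^{1}(1-s)^{\delta-\gamma-1}s^{\gamma-1}E_{\gamma,\gamma}(-xs^{\gamma})\,ds .
\]
This identity follows termwise from the Beta integral. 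Complete monotonicity is preserved because the weights are positive and $x\mapsto xs^{\gamma}$ is a positive dilation.
\end{itemize}
Handling the contour deformation in step 1 is the main obstacle, which is why we cite it.

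Granting complete monotonicity, \eqref{eq:MLbound} follows quickly. A completely monotone function is nonincreasing, so $E_{\gamma,\delta}(-x)\le E_{\gamma,\delta}(0)=1/\Gamma(\delta)$. Strict positivity comes from the Bernstein representation $E_{\gamma,\delta}(-x)=\int e^{-xs}\,d\mu(s)$ with $\mu\ge0$. This integral vanishes at some finite $x$ only if $\mu=0$, which would contradict $E_{\gamma,\delta}(0)=1/\Gamma(\delta)>0$.
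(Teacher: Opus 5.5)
Your proof is correct and follows the paper's: the paper also obtains (iii) by termwise differentiation with the same index shift, treats (i) as standard, and for (ii) cites the Pollard--Schneider theorem. Your root test and your derivation of \eqref{eq:MLbound} from the Bernstein measure only fill in details the paper leaves to its references.

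One caution about the optional self-contained sketch: its step~1 would not work as described. For $x>0$ and $\gamma\in(0,1)$, collapsing the Hankel contour in $E_{\gamma,\gamma}(-x)=\frac{1}{2\pi i}\int_{\mathrm{Ha}}\frac{e^{s}}{s^{\gamma}+x}\,ds$ onto the negative axis gives
\[
E_{\gamma,\gamma}(-x)=\frac{\sin(\pi\gamma)}{\pi}\int_{0}^{\infty}
\frac{r^{\gamma}e^{-r}}{r^{2\gamma}+2xr^{\gamma}\cos(\pi\gamma)+x^{2}}\,dr .
\]
This proves $E_{\gamma,\gamma}(-x)>0$, but it is not a Laplace representation in $x$. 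After the substitutions $x=t^{\gamma}$ and $r=\rho t$, it says exactly that $t\mapsto t^{\gamma-1}E_{\gamma,\gamma}(-t^{\gamma})$ is the Laplace transform \emph{in $t$} of the nonnegative density $\frac{\sin(\pi\gamma)}{\pi}\rho^{\gamma}\bigl(\rho^{2\gamma}+2\rho^{\gamma}\cos(\pi\gamma)+1\bigr)^{-1}$.

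That is complete monotonicity in the time variable, the property of kernels like $k_{j}$ in \eqref{eq:modesofu}. It is implied by complete monotonicity of $x\mapsto E_{\gamma,\gamma}(-x)$ but does not imply it. In particular it does not by itself show that $E_{\gamma,\gamma}(-x)$ is non-increasing in $x$, which is what the proof of Lemma~\ref{lem:modesofu} uses. The Laplace density in $x$ is Pollard's, built from the one-sided stable law, and its positivity cannot be read off a sign on a collapsed contour.

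The repair is cheap. Differentiating the series exactly as in (iii) gives $E_{\gamma,\gamma}(-x)=-\gamma\,\frac{d}{dx}E_{\gamma}(-x)$. Minus the derivative of a completely monotone function is completely monotone, so the case $\delta=\gamma$ follows from Pollard's theorem alone. Your step~2, whose Beta-integral identity is correct, then gives every $\delta>\gamma$. Pollard's theorem is then the only input that has to be cited.
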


\begin{proof}
For the definition and for (i) see \cite[Sec.~1.8]{kilbas}, \cite[Sec.~1.2]{podlubny} and
\cite[Ch.~4]{gorenflo}. Part (ii) is the Pollard--Schneider theorem, \cite[Ch.~4]{gorenflo}.
Part (iii) follows by differentiating the series term by term, which is licit because it is
entire:
\[
\frac{d}{dt}E_{\gamma,1}\bigl(-\lambda t^{\gamma}\bigr)
=\sum_{k\ge1}\frac{(-\lambda)^{k}\gamma k\,t^{\gamma k-1}}{\Gamma(\gamma k+1)}
=-\lambda t^{\gamma-1}\sum_{m\ge0}\frac{\bigl(-\lambda t^{\gamma}\bigr)^{m}}
{\Gamma(\gamma m+\gamma)}
=-\lambda t^{\gamma-1}E_{\gamma,\gamma}\bigl(-\lambda t^{\gamma}\bigr).
\]
\end{proof}

\begin{lemma}[Alikhanov inequality]\label{lem:alikhanov}
Let $\gamma\in(0,1)$ and $v\in AC\bigl([0,T];\LL\bigr)$. Then
\begin{equation}\label{eq:alikhanov}
\bigl(\partial^{\gamma}_{t}v(t),v(t)\bigr)_{\LL}
\ge\tfrac12\,\partial^{\gamma}_{t}\norm{v(t)}^{2}_{\LL},\qquad\text{a.e. }t\in(0,T).
\end{equation}
\end{lemma}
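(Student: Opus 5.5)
We would prove the Alikhanov inequality by reducing it to the positivity of an explicit integral, written in terms of $v'$ and the Caputo kernel. Fix $t\in(0,T)$ and write $k(s):=s^{-\gamma}/\Gamma(1-\gamma)$, which is positive, decreasing and integrable on $(0,T]$. By \eqref{eq:caputo} applied to $v$ and to the scalar function $w(t):=\norm{v(t)}^{2}_{\LL}$, both absolutely continuous with $w'(s)=2(v'(s),v(s))_{\LL}$ a.e., we have
\[
\bigl(\partial^{\gamma}_{t}v(t),v(t)\bigr)_{\LL}-\tfrac12\,\partial^{\gamma}_{t}w(t)
=\int_{0}^{t}k(t-s)\,\bigl(v'(s),v(t)-v(s)\bigr)_{\LL}\,ds .
\]
So the claim is that this integral is nonnegative for a.e. $t$. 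We will do the bookkeeping at the Lebesgue points $t$ at which both Caputo derivatives exist, which is a.e. by Lemma~\ref{lem:caputoprops}.

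Set $y(s):=v(t)-v(s)$, so that $y'(s)=-v'(s)$ and $y(t)=0$. The integrand is then $-k(t-s)\,(y'(s),y(s))_{\LL}=-\tfrac12 k(t-s)\frac{d}{ds}\norm{y(s)}^{2}_{\LL}$. We integrate by parts on $(0,t-\eta)$ and let $\eta\downarrow0$. This gives
\[
\tfrac12 k(t)\norm{y(0)}^{2}_{\LL}
-\tfrac12 k(\eta)\norm{y(t-\eta)}^{2}_{\LL}
+\tfrac12\int_{0}^{t-\eta}\bigl(-k'(t-s)\bigr)\norm{y(s)}^{2}_{\LL}\,ds ,
\]
where we have used $\frac{d}{ds}k(t-s)=-k'(t-s)\ge0$. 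The first term and the integral are nonnegative, because $k>0$ and $k'<0$. Hence everything rests on showing that the boundary term at $s=t$ vanishes.

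The main obstacle is this boundary term, $k(\eta)\norm{v(t)-v(t-\eta)}^{2}_{\LL}\sim\eta^{-\gamma}\norm{v(t)-v(t-\eta)}^{2}$. At a point $t$ where $v$ is differentiable, which for an absolutely continuous $L^{2}$-valued function (Hilbert spaces have the Radon--Nikodym property) is a.e., the increment is $O(\eta)$. The term is therefore $O(\eta^{2-\gamma})\to0$.

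We also need the limits of the other terms to exist. The monotone integral converges by monotone convergence. Since the left-hand side is finite at points where both Caputo derivatives exist, the integral limit is finite, and the identity passes to the limit $\eta\downarrow0$, the left-hand integral converging because the integrand is integrable. Collecting the terms, the difference in the first display equals $\tfrac12k(t)\norm{v(t)-v(0)}^{2}+\tfrac12\int_{0}^{t}(-k'(t-s))\norm{v(t)-v(s)}^{2}ds\ge0$, which is \eqref{eq:alikhanov}.
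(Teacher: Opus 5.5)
Your proof is correct and is the argument of Alikhanov's Lemma~1, which the paper cites rather than proves (taking the Hilbert-space-valued version from Kubica's book): you write the defect as $\int_{0}^{t}k(t-s)\bigl(v'(s),v(t)-v(s)\bigr)_{\LL}\,ds$, integrate by parts against the decreasing kernel, and reach the nonnegative representation $\tfrac12k(t)\norm{v(t)-v(0)}^{2}_{\LL}+\tfrac12\int_{0}^{t}\bigl(-k'(t-s)\bigr)\norm{v(t)-v(s)}^{2}_{\LL}\,ds$. Working at points where $v$ is differentiable and $\int_{0}^{t}k(t-s)\norm{v'(s)}_{\LL}\,ds<\infty$ (both full-measure sets) is exactly what kills the boundary term $k(\eta)\norm{v(t)-v(t-\eta)}^{2}_{\LL}$ and justifies passing to the limit $\eta\downarrow0$, which is why the inequality is asserted only almost everywhere.
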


\begin{proof}
See \cite[Lemma~1]{alikhanov}, and \cite[Ch.~2]{kubicabook} for the Hilbert-space valued version.
\end{proof}

\begin{lemma}[Fractional Gr\"onwall inequality]\label{lem:gronwall}
Let $\gamma>0$ and $\varepsilon>0$, let $a:[0,T]\to[0,\infty)$ be non-decreasing and locally
integrable and let $y:[0,T]\to[0,\infty)$ be locally integrable with
\begin{equation}\label{eq:gronwall-hyp}
y(t)\le a(t)+\varepsilon\,\bigl(\IL{\gamma}y\bigr)(t),\qquad t\in[0,T].
\end{equation}
Then $y(t)\le a(t)E_{\gamma}\bigl(\varepsilon t^{\gamma}\bigr)$ on $[0,T]$.
\end{lemma}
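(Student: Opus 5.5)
We argue by iteration of the inequality \eqref{eq:gronwall-hyp}, in the spirit of Henry's lemma. Write $B\psi:=\varepsilon\,\IL{\gamma}\psi$. By Lemma~\ref{lem:RLprops}(iii) the operator $B$ is linear and order preserving on nonnegative locally integrable functions. By Lemma~\ref{lem:RLprops}(ii) we have $B^{k}=\varepsilon^{k}\IL{k\gamma}$. Applying $B$ repeatedly to $y\le a+By$ therefore gives, for every $n\ge1$,
\[
y\le\sum_{k=0}^{n-1}\varepsilon^{k}\bigl(\IL{k\gamma}a\bigr)+\varepsilon^{n}\bigl(\IL{n\gamma}y\bigr),
\]
pointwise a.e.\ on $[0,T]$. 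The plan then has two steps: bound the finite sum using the monotonicity of $a$, and show that the remainder tends to zero.

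\textbf{The sum.} Since $a$ is non-decreasing and $a\ge0$, for $s\le t$ we have $a(s)\le a(t)$. Hence
\[
\bigl(\IL{k\gamma}a\bigr)(t)\le a(t)\,\frac{1}{\Gamma(k\gamma)}\int_{0}^{t}(t-s)^{k\gamma-1}ds=a(t)\,\frac{t^{k\gamma}}{\Gamma(k\gamma+1)},
\]
where the kernel integral is the elementary power computation recalled at the start of Section~\ref{sec:tools}. Summing over $k$ gives
\[
\sum_{k=0}^{n-1}\varepsilon^{k}\bigl(\IL{k\gamma}a\bigr)(t)\le a(t)\sum_{k\ge0}\frac{(\varepsilon t^{\gamma})^{k}}{\Gamma(k\gamma+1)}=a(t)E_{\gamma}(\varepsilon t^{\gamma}),
\]
and the series converges by Lemma~\ref{lem:ML}(i).

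\textbf{The remainder.} This is the only step that needs care, since $y$ is merely locally integrable, so $y\in L^{1}(0,T)$, and not bounded. Once $n\gamma\ge1$ the kernel $(t-s)^{n\gamma-1}$ is bounded by $T^{n\gamma-1}$ on $[0,T]$. This yields
\[
\varepsilon^{n}\bigl(\IL{n\gamma}y\bigr)(t)\le\frac{\varepsilon^{n}T^{n\gamma-1}}{\Gamma(n\gamma)}\norm{y}_{L^{1}(0,T)},
\]
uniformly in $t$. The right-hand side tends to zero as $n\to\infty$, because $\Gamma(n\gamma)$ grows superexponentially while $(\varepsilon T^{\gamma})^{n}$ grows only geometrically; equivalently, these are terms of the convergent series for $E_{\gamma,\gamma}(\varepsilon T^{\gamma})$, again by Lemma~\ref{lem:ML}(i). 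Letting $n\to\infty$ then gives $y(t)\le a(t)E_{\gamma}(\varepsilon t^{\gamma})$.

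The iterated inequality holds a.e. Because $a$ is monotone, the bound holds at every point where \eqref{eq:gronwall-hyp} holds, and the hypothesis is stated for all $t\in[0,T]$.
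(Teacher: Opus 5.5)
Your argument is correct. It is essentially the proof of the result the paper cites from Ye, Gao and Ding (their Theorem~1 and Corollary~2): the Henry-type iteration $y\le\sum_{k<n}\varepsilon^{k}\IL{k\gamma}a+\varepsilon^{n}\IL{n\gamma}y$, with the remainder sent to zero by the growth of $\Gamma(n\gamma)$. The paper only quotes that reference and fixes the normalisation; with your constant coefficient $\varepsilon$, the semigroup property of Lemma~\ref{lem:RLprops}(ii) takes the place of their inductive kernel estimate.

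One small correction to your last paragraph. The iterated inequality holds at every $t$ where \eqref{eq:gronwall-hyp} holds, not only a.e., because applying $\IL{\gamma}$ to an a.e.\ inequality between nonnegative functions gives an inequality at every point. Monotonicity of $a$ plays a different role: it is what makes the bound on the finite sum pointwise. The conclusion therefore does hold on all of $[0,T]$, as you claim.
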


\begin{proof}
See \cite[Thm.~1 and Cor.~2]{ye}. In \cite{ye} the hypothesis is written without the factor
$1/\Gamma(\gamma)$, so that \eqref{eq:gronwall-hyp} corresponds to $b=\varepsilon/\Gamma(\gamma)$
there and the argument of $E_{\gamma}$ in the conclusion is $\varepsilon t^{\gamma}$, with no
factor $\Gamma(\gamma)$. The constants below are computed with this normalisation.
\end{proof}

Under \Hthree{} there is no pointwise control in $t$, and the energy inequality will have to be
integrated in time before Gr\"onwall is applied. Integration could destroy the Gr\"onwall
structure. It does not, because integrating $\IL{\gamma}h$ produces a fractional integral of the
primitive of $h$.

\begin{lemma}[Integration in time]\label{lem:timeint}
Let $\gamma>0$, $\tau\in(0,T]$ and $h\in L^{1}(0,\tau)$, and let
$H(\sigma):=\int_{0}^{\sigma}h$. Then
\begin{enumerate}[label=\textup{(\roman*)},leftmargin=2.4em]
\item the identity
\begin{equation}\label{eq:timeint}
\int_{0}^{\tau}\bigl(\IL{\gamma}h\bigr)(t)\,dt
=\frac{1}{\Gamma(\gamma+1)}\int_{0}^{\tau}h(s)(\tau-s)^{\gamma}ds
=\bigl(\IL{\gamma}H\bigr)(\tau)
\end{equation}
holds;
\item if in addition $h\ge0$, then
$\int_{0}^{\tau}\bigl(\IL{\gamma}h\bigr)(t)\,dt\le\tau^{\gamma}
\Gamma(\gamma+1)^{-1}\norm{h}_{L^{1}(0,\tau)}$.
\end{enumerate}
\end{lemma}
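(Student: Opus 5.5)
The plan is a direct Fubini computation for (i), with (ii) following from (i) by a one-line estimate.

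For (i) I will write out the definition \eqref{eq:RLleft} and integrate over $t\in(0,\tau)$:
\[
\int_{0}^{\tau}\bigl(\IL{\gamma}h\bigr)(t)\,dt=\frac{1}{\Gamma(\gamma)}\int_{0}^{\tau}\!\int_{0}^{t}(t-s)^{\gamma-1}h(s)\,ds\,dt .
\]
The first step is to justify exchanging the order of integration by Tonelli applied to $\abs{h}$. The double integral of $(t-s)^{\gamma-1}\abs{h(s)}$ over the triangle $0<s<t<\tau$ equals
\[
\frac{1}{\gamma}\int_{0}^{\tau}\abs{h(s)}(\tau-s)^{\gamma}\,ds\le\frac{\tau^{\gamma}}{\gamma}\norm{h}_{L^{1}(0,\tau)}<\infty .
\]
This is exactly the integrability of $\sigma^{\gamma-1}$ at the origin, which holds because $\gamma>0$. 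Fubini then applies, and the inner integral is $\int_{s}^{\tau}(t-s)^{\gamma-1}dt=(\tau-s)^{\gamma}/\gamma$. Using $\gamma\Gamma(\gamma)=\Gamma(\gamma+1)$ gives the first equality in \eqref{eq:timeint}.

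For the second equality I will evaluate $\bigl(\IL{\gamma}H\bigr)(\tau)$. There are two routes.
\begin{itemize}
\item Lemma~\ref{lem:RLprops}(ii) gives $H=\IL{1}h$, so $\IL{\gamma}H=\IL{\gamma+1}h$. Its value at $\tau$ is, by \eqref{eq:RLleft} with order $\gamma+1$, the expression $\Gamma(\gamma+1)^{-1}\int_{0}^{\tau}(\tau-s)^{\gamma}h(s)\,ds$.
\item Alternatively, one can write $H(\sigma)=\int_{0}^{\sigma}h(s)\,ds$ inside the defining integral of $\IL{\gamma}H$ at $\tau$ and apply Fubini once more with the same justification.
\end{itemize}
The only delicate point in this step is that the semigroup identity in Lemma~\ref{lem:RLprops}(ii) is stated as an identity in $L^{p}$, hence a.e., while here it must hold at the particular point $\tau$. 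I will avoid this by using the direct Fubini computation. Both sides are then given by absolutely convergent integrals defined for every $\tau$, since $H$ is continuous and bounded, so no a.e. ambiguity arises.

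For (ii) I will assume $h\ge0$. Then $0\le(\tau-s)^{\gamma}\le\tau^{\gamma}$ on $(0,\tau)$, and inserting this bound into the middle expression of \eqref{eq:timeint} gives the stated inequality immediately.

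I expect no real obstacle in this lemma. The one thing to get right is the Tonelli justification together with the pointwise-versus-a.e. issue noted above, and both are handled by working from the absolutely convergent double integral.
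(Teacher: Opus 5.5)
Your proof is correct and is essentially the paper's argument: the paper identifies all three expressions with $\bigl(\IL{\gamma+1}h\bigr)(\tau)$ via the semigroup property of Lemma~\ref{lem:RLprops}(ii) and then upgrades the resulting a.e.\ identity to every $\tau$ by continuity in $\tau$. You instead carry out the underlying Fubini computation directly, which gives the identity at every $\tau$ without the continuity step, and your part (ii) coincides with the paper's.
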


\begin{proof}
The three expressions are one and the same object, $\bigl(\IL{\gamma+1}h\bigr)(\tau)$.
Integrating over $(0,\tau)$ amounts to applying $\IL{1}$, so from the semigroup property of
Lemma~\ref{lem:RLprops}(ii) we obtain
$\int_{0}^{\tau}\bigl(\IL{\gamma}h\bigr)(t)\,dt=\bigl(\IL{\gamma+1}h\bigr)(\tau)$, which written
out is the middle expression; applying the semigroup in the other order, with
$H=\IL{1}h$, we obtain
$\bigl(\IL{\gamma}H\bigr)(\tau)=\bigl(\IL{\gamma+1}h\bigr)(\tau)$. All three are continuous in
$\tau$, so the identity holds at every point and not merely almost everywhere. The final bound
follows from $(\tau-s)^{\gamma}\le\tau^{\gamma}$.
\end{proof}

\subsection{The Galerkin scheme}\label{sec:galerkin}

\begin{lemma}[Spectral basis]\label{lem:spectral}
Under \Hone--\Htwo{} the operator $A:\HH\to\HM$ defined by $\langle Au,v\rangle:=a(u,v)$ is
linear, continuous, self-adjoint and coercive, with compact resolvent. There are therefore
eigenvalues $0<\lambda_{1}\le\lambda_{2}\le\cdots\to\infty$ and eigenfunctions
$\{w_{j}\}_{j\ge1}\subset\HH$ with
\begin{equation}\label{eq:eigen}
a(w_{j},v)=\lambda_{j}(w_{j},v)_{\LL}\qquad\forall v\in\HH,\ \forall j\ge1,
\end{equation}
the family $\{w_{j}\}$ being an orthonormal basis of $\LL$ and an orthogonal basis of $\HH$; in
particular $a(w_{j},w_{k})=\lambda_{j}\delta_{jk}$.
\end{lemma}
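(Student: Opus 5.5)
The plan is the standard route through the Lax--Milgram theorem and the spectral theorem for compact self-adjoint operators.

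\emph{The operator $A$.} Linearity of $A$ is immediate from the bilinearity of $a$. Continuity follows from the bound in \eqref{eq:coercive}:
\[
\norm{Au}_{\HM}\le a_{1}\norm{u}_{\HH}.
\]
Self-adjointness, in the sense $\langle Au,v\rangle=\langle Av,u\rangle$, is exactly the symmetry $a(u,v)=a(v,u)$, which holds because $A(x)$ is symmetric by \Htwo. Coercivity is the first inequality in \eqref{eq:coercive}. By Lax--Milgram, $A:\HH\to\HM$ is therefore an isomorphism.

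\emph{A compact self-adjoint solution operator on $\LL$.} Define $K:\LL\to\LL$ by $K:=\iota\circ A^{-1}\circ j$. Here $j:\LL\hookrightarrow\HM$ is the continuous embedding $h\mapsto(h,\cdot)_{\LL}$, and $\iota:\HH\hookrightarrow\LL$ is the Rellich--Kondrachov embedding, which is compact by \Hone{} (as recalled in Section~\ref{sec:hyp}). Then $K$ is compact, being a composition of bounded maps with one compact factor; this is the ``compact resolvent'' of the statement. It is self-adjoint on $\LL$: if $u=Kh$ and $v=Kk$, then
\[
(Kh,k)_{\LL}=a(v,u)=a(u,v)=(h,Kk)_{\LL}.
\]
It is positive and injective, since $(Kh,h)_{\LL}=a(u,u)\ge a_{0}\norm{u}_{\HH}^{2}>0$ whenever $h\neq0$ (if $u=0$ then $h=Au=0$).

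\emph{The spectral theorem.} The Hilbert--Schmidt spectral theorem applied to $K$ gives an orthonormal basis $\{w_{j}\}$ of $\overline{\operatorname{ran}K}=(\ker K)^{\perp}=\LL$. The eigenvalues are $\mu_{j}>0$ with $\mu_{j}\to0$; the space $\LL$ is infinite-dimensional, so there are infinitely many of them. Set $\lambda_{j}:=1/\mu_{j}$, ordered nondecreasingly, so that $0<\lambda_{1}\le\lambda_{2}\le\cdots\to\infty$. Since $w_{j}=\lambda_{j}Kw_{j}\in\HH$ and $Aw_{j}=\lambda_{j}j(w_{j})$, we obtain \eqref{eq:eigen}.

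\emph{Orthogonality and completeness in $\HH$.} Taking $v=w_{k}$ in \eqref{eq:eigen} gives $a(w_{j},w_{k})=\lambda_{j}\delta_{jk}$, so the $w_{j}$ are orthogonal in $\HH$ for the inner product $a$. It remains to show completeness in $\HH$. Suppose $v\in\HH$ satisfies $a(w_{j},v)=0$ for all $j$. Then $\lambda_{j}(w_{j},v)_{\LL}=0$ for all $j$, hence $v=0$ by completeness of $\{w_{j}\}$ in $\LL$.

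\emph{The main obstacle.} The paper's norm on $\HH$ is $\norm{\nabla u}_{\LL}$, not the energy norm, so orthogonality with respect to $\norm{\nabla\cdot}$ itself holds only when $A=I$. The statement should therefore be read with $\HH$ carrying the equivalent inner product $a(\cdot,\cdot)$, equivalent by \eqref{eq:coercive}. With that reading completeness is norm-independent, and this is the only point needing care. I would add a sentence making this reading explicit.
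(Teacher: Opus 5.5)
Your proof is correct and follows the paper's route: the paper's proof consists only of the citation of Lax--Milgram, Rellich--Kondrachov and the spectral theorem for compact self-adjoint operators, and you supply the details it leaves implicit (the compact, positive, self-adjoint operator $\iota\circ A^{-1}\circ j$ on $\LL$, the inversion $\lambda_{j}=1/\mu_{j}$, and completeness in $\HH$ from $a(w_{j},v)=\lambda_{j}(w_{j},v)_{\LL}$). Your caveat is right, and the paper does not make it: orthogonality in $\HH$ holds for the energy product $a(\cdot,\cdot)$ and can fail for $(\nabla\cdot,\nabla\cdot)_{\LL}$ when $A$ is not a constant multiple of the identity (so ``only when $A=I$'' slightly overstates it); consequently the paper's later claims that the $\LL$-projection onto $V_{n}$ is an $\HH$-contraction are exact in the $a$-norm but hold only up to the factor $\sqrt{a_{1}/a_{0}}$ in the gradient norm.
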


\begin{proof}
This is the Lax--Milgram lemma together with the Rellich--Kondrachov theorem, available because
$\partial\Omega$ is Lipschitz, \cite[Thm.~9.16]{brezis}, and the spectral theorem for compact
self-adjoint operators, \cite[Thm.~6.11]{brezis}.
\end{proof}

For $n\in\mathbb{N}$ we set $V_{n}:=\operatorname{span}\{w_{1},\dots,w_{n}\}$ and look for
\begin{equation}\label{eq:un}
u_{n}(t):=\sum_{j=1}^{n}c_{j,n}(t)\,w_{j}
\end{equation}
solving the equation projected onto $V_{n}$: for $j=1,\dots,n$ and a.e. $t\in(0,T)$,
\begin{equation}\label{eq:galerkin}
\bigl(\partial^{\alpha}_{t}u_{n}(t),w_{j}\bigr)_{\LL}+a\bigl(u_{n}(t),w_{j}\bigr)
=\bigl(f(t),w_{j}\bigr)_{\LL} .
\end{equation}
Orthonormality in $\LL$ and $a(w_{k},w_{j})=\lambda_{j}\delta_{jk}$ let us identify the sums
termwise, so that \eqref{eq:galerkin} is equivalent to $n$ decoupled scalar equations,
\begin{equation}\label{eq:modes}
\partial^{\alpha}_{t}c_{j,n}(t)+\lambda_{j}c_{j,n}(t)=f_{j,n}(t),
\qquad f_{j,n}:=\bigl(f(\cdot),w_{j}\bigr)_{\LL},
\end{equation}
where we take the initial values from the orthogonal projection of the datum,
\begin{equation}\label{eq:gn}
c_{j,n}(0)=(g,w_{j})_{\LL},\qquad g_{n}:=\sum_{j\le n}(g,w_{j})_{\LL}w_{j} .
\end{equation}
These projections are onto increasing subspaces whose union is dense, so that
\begin{equation}\label{eq:gnconv}
g_{n}\to g\ \text{ in }\LL,\qquad\norm{g_{n}}_{\LL}\le\norm{g}_{\LL} .
\end{equation}
Observe that the system is genuinely decoupled: neither \eqref{eq:modes} nor \eqref{eq:gn}
involves $n$, so $c_{j,n}$ is the same function for every $n\ge j$, and $u_{n}$ is the $n$-th
partial sum of one fixed expansion. We use this in Section~\ref{sec:spectral}.

\begin{lemma}[The scalar fractional equation]\label{lem:scalar}
Let $\lambda>0$, $F\in L^{1}(0,T)$ and $\eta_{0}\in\R$, and let us consider the Volterra
equation
\begin{equation}\label{eq:volterra}
c(t)+\lambda\bigl(\IL{\alpha}c\bigr)(t)=\eta_{0}+\bigl(\IL{\alpha}F\bigr)(t),
\qquad\text{a.e. }t\in(0,T).
\end{equation}
\begin{enumerate}[label=\textup{(\roman*)},leftmargin=2.4em]
\item The function
\begin{equation}\label{eq:cexplicit}
c(t)=\eta_{0}E_{\alpha,1}\bigl(-\lambda t^{\alpha}\bigr)+(K*F)(t),
\qquad K(\sigma):=\sigma^{\alpha-1}E_{\alpha,\alpha}\bigl(-\lambda\sigma^{\alpha}\bigr),
\end{equation}
solves \eqref{eq:volterra}, and $c\in C([0,T])$ whenever $F\in L^{r}(0,T)$ with $r>1/\alpha$;
\item equation \eqref{eq:volterra} has at most one solution in $L^{1}(0,T)$;
\item if moreover $c\in AC([0,T])$, then \eqref{eq:volterra} is equivalent to
$\partial^{\alpha}_{t}c+\lambda c=F$ a.e. together with $c(0)=\eta_{0}$.
\end{enumerate}
\end{lemma}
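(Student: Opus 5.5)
For (i) I would verify the two terms of \eqref{eq:cexplicit} separately, using linearity of \eqref{eq:volterra}. For the homogeneous part I expand $E_{\alpha,1}(-\lambda t^{\alpha})=\sum_{k}(-\lambda)^{k}t^{\alpha k}/\Gamma(\alpha k+1)$ and apply $\IL{\alpha}$ termwise. The identity $\IL{\alpha}t^{\alpha k}/\Gamma(\alpha k+1)=t^{\alpha(k+1)}/\Gamma(\alpha(k+1)+1)$ holds by the Beta integral, and termwise application is licit because the series converges uniformly on $[0,T]$ by Lemma~\ref{lem:ML}(i). This gives $\lambda\IL{\alpha}E_{\alpha,1}(-\lambda t^{\alpha})=1-E_{\alpha,1}(-\lambda t^{\alpha})$, which is \eqref{eq:volterra} with $F=0$.

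For the forced part I expand $K(\sigma)=\sum_{k}(-\lambda)^{k}\sigma^{\alpha k+\alpha-1}/\Gamma(\alpha k+\alpha)$ and read it as $K=\sum_{k}(-\lambda)^{k}\kappa_{\alpha(k+1)}$, where $\kappa_{\beta}(\sigma)=\sigma^{\beta-1}/\Gamma(\beta)$. Then $K*F=\sum_{k}(-\lambda)^{k}\IL{\alpha(k+1)}F$. The series converges absolutely in $L^{1}(0,T)$ by \eqref{eq:RLbounded}, since $T^{\alpha(k+1)}/\Gamma(\alpha(k+1)+1)$ is summable against $\lambda^{k}$. By the semigroup property of Lemma~\ref{lem:RLprops}(ii), $\lambda\IL{\alpha}(K*F)=\IL{\alpha}F-K*F$ by telescoping, which is the equation with $\eta_{0}=0$.

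For continuity, the first term is continuous because $E_{\alpha,1}$ is entire. For the second I would note that $K(\sigma)\le\sigma^{\alpha-1}/\Gamma(\alpha)$ by \eqref{eq:MLbound}, with $\delta=\gamma=\alpha$. Then $K\in L^{r'}(0,T)$ exactly when $r'(\alpha-1)>-1$, which is equivalent to $r>1/\alpha$. The convolution of an $L^{r'}$ function with an $L^{r}$ function, both restricted to a bounded interval, is continuous by continuity of translations in $L^{r'}$ (Young/Hölder), and it vanishes at $0$. This exponent count is the only non-routine point, and it is where the threshold $1/\alpha$ quoted in the abstract originates.

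For (ii) I take the difference $d$ of two $L^{1}$ solutions. It satisfies $d=-\lambda\IL{\alpha}d$, so $\abs{d}\le\lambda\IL{\alpha}\abs{d}$, and iterating $m$ times gives $\abs{d}\le\lambda^{m}\IL{m\alpha}\abs{d}$. By \eqref{eq:RLbounded} with $p=1$, $\norm{d}_{L^{1}}\le\lambda^{m}T^{m\alpha}\Gamma(m\alpha+1)^{-1}\norm{d}_{L^{1}}$, and the right side tends to $0$ as $m\to\infty$, so $d=0$. Alternatively this follows from Lemma~\ref{lem:gronwall} with $a=0$.

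For (iii), assume $c\in AC$. Given the equation, I apply $\IL{1-\alpha}$ to \eqref{eq:volterra} and use Lemma~\ref{lem:RLprops}(ii). Since $\IL{\alpha}c$ and $\IL{\alpha}F$ lie in $\IL{\alpha}L^{1}$, letting $t\to0$ in \eqref{eq:volterra} forces $c(0)=\eta_{0}$; this is clear for the $c$-term since $c$ is continuous, and for the $F$-term it holds at least along a sequence, which suffices by continuity of $c$. Then $\IL{1-\alpha}(c-\eta_{0})=\IL{1}(F-\lambda c)$, and differentiating gives $\partial^{\alpha}_{t}c=F-\lambda c$ a.e. by \eqref{eq:caputodef}. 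Conversely, applying $\IL{\alpha}$ to $\partial^{\alpha}_{t}c+\lambda c=F$ and using \eqref{eq:ftfc} with $c(0)=\eta_{0}$ returns \eqref{eq:volterra}.

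The step I expect to need most care is the recovery of $c(0)=\eta_{0}$ in (iii) when $F$ is merely $L^{1}$, because $\IL{\alpha}F$ need not tend to $0$ pointwise. I would handle it by integrating \eqref{eq:volterra} over $(0,\varepsilon)$ and dividing by $\varepsilon$. Lemma~\ref{lem:timeint}(ii) gives that the mean of $\IL{\alpha}\abs{F}$ over $(0,\varepsilon)$ is at most $\varepsilon^{\alpha}\norm{F}_{L^{1}(0,\varepsilon)}/(\varepsilon\,\Gamma(\alpha+1))$. That bound does not obviously vanish, so I would instead use absolute continuity of $\IL{\alpha+1}\abs{F}$ to show that the averages tend to $0$.
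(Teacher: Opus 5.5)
Your proofs of (i) and (ii) are correct, and they take a more elementary route than the paper, which only cites Laplace operational calculus and the resolvent theory of Volterra equations. The termwise Mittag-Leffler series, the $L^{1}(0,T)$-convergent expansion $K=\sum_{k}(-\lambda)^{k}\kappa_{\alpha(k+1)}$ combined with the semigroup property, continuity from $K\in L^{r'}(0,T)$, and the iteration $\abs{d}\le\lambda^{m}\IL{m\alpha}\abs{d}$ all work directly on the finite interval, with no extension to $(0,\infty)$. The converse half of (iii) is also fine.

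The gap is the forward half of (iii), exactly at the step you flagged. It cannot be closed, because for $F$ merely in $L^{1}(0,T)$ the implication is false. Take $F(t):=t^{-\alpha}/\Gamma(1-\alpha)\in L^{1}(0,T)$. Euler's Beta integral gives $(\IL{\alpha}F)(t)=1$ for every $t>0$. Then $c\equiv0$ is absolutely continuous and solves \eqref{eq:volterra} with $\eta_{0}=-1$, yet $c(0)=0\neq\eta_{0}$ and $\partial^{\alpha}_{t}c+\lambda c=0\neq F$.

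So there is no sequence $t_{k}\to0$ with $(\IL{\alpha}F)(t_{k})\to0$, and the averaging repair fails too. Here $\varepsilon^{-1}\int_{0}^{\varepsilon}(\IL{\alpha}F)(t)\,dt=\varepsilon^{-1}(\IL{\alpha+1}F)(\varepsilon)=1$ for every $\varepsilon$. Absolute continuity of $\IL{\alpha+1}\abs{F}$ only gives $(\IL{\alpha+1}\abs{F})(\varepsilon)=o(1)$, whereas you would need $o(\varepsilon)$.

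What your computation does prove for $F\in L^{1}(0,T)$ and $c\in AC([0,T])$ is $\IL{1-\alpha}(c-\eta_{0})=\IL{1}(F-\lambda c)$. By \eqref{eq:RLvsC} this is
$\partial^{\alpha}_{t}c+\lambda c=F-(c(0)-\eta_{0})\,t^{-\alpha}/\Gamma(1-\alpha)$ a.e.,
where the defect is $c(0)-\eta_{0}=\Gamma(2-\alpha)\lim_{t\to0}t^{\alpha-1}\int_{0}^{t}F(s)\,ds$. This defect is nonzero in the example above.

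To obtain the stated equivalence you need a hypothesis that forces $\IL{\alpha}F$ to vanish at $0$. One option is $F\in L^{r}(0,T)$ with $r>1/\alpha$, where H\"older gives $\abs{(\IL{\alpha}F)(t)}\le C\,t^{\alpha-1/r}\norm{F}_{L^{r}(0,T)}$. Another is $F$ continuous, as in the lemma of Diethelm that the paper cites. Under either hypothesis your argument (let $t\to0$ in \eqref{eq:volterra}, then differentiate) is complete.

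The paper's own proof gives nothing beyond that citation and a pointer to book chapters for general $F\in L^{1}$. So part (iii) as stated overreaches in the same way your proposal does. Its essential application, Lemma~\ref{lem:modes}(ii), has $F$ smooth and vanishing near $0$, where the repaired version suffices. The side remark after \eqref{eq:defdalpha} is affected for $\alpha<\frac12$, since your $F$ then lies in $L^{2}(0,T)$.
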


\begin{proof}
Part (i) is Laplace operational calculus, \cite[Secs.~1.2, 4.1]{podlubny},
\cite[Sec.~4.1]{kilbas}, \cite[Sec.~2]{sakamoto}, the regularity being the mapping property of
$\IL{\alpha}$ recorded in Proposition~\ref{prop:threshold}(i). Part (ii) is the resolvent theory
of linear Volterra equations of the second kind with locally integrable kernel,
\cite[Ch.~2]{gripenberg}, \cite[Ch.~I]{pruss}. Part (iii) is \cite[Lemma~6.2]{diethelm} for
continuous $F$, and \cite[Ch.~2]{gripenberg}, \cite[Ch.~2]{kubicabook} in the generality needed
here.
\end{proof}

Applying parts (i) and (ii) mode by mode with $\lambda=\lambda_{j}$ and $F=f_{j,n}$, we determine
$c_{j,n}$, and with it $u_{n}$, uniquely. Since $\abs{f_{j,n}(t)}\le\norm{f(t)}_{\LL}$ by
Cauchy--Schwarz, each mode inherits the integrability of $f$; under \Hthree{} with
$\alpha>\frac12$ the exponent $r=2$ already satisfies $r>1/\alpha$, and under \Hthreep{} we have
$r>2/\alpha>1/\alpha$, so that in both of those cases
\begin{equation}\label{eq:unC}
u_{n}\in C\bigl([0,T];V_{n}\bigr)
\qquad\text{(under \Hthree{} with }\alpha>\tfrac12,\text{ or under \Hthreep).}
\end{equation}

The integral form \eqref{eq:volterra} mentions no derivative, so we must say what
$\partial^{\alpha}_{t}u_{n}$ denotes in \eqref{eq:galerkin}. We define it by solving for it in
\eqref{eq:modes},
\begin{equation}\label{eq:defdalpha}
\partial^{\alpha}_{t}u_{n}:=f_{n}-Au_{n}\in L^{2}\bigl((0,T);V_{n}\bigr),
\qquad f_{n}:=\sum_{j\le n}f_{j,n}w_{j},
\end{equation}
so that \eqref{eq:galerkin} holds by construction, for every admissible $f$ and with no extra
regularity. This is not an abuse of notation: by Lemma~\ref{lem:scalar}(iii), whenever $u_{n}$ is
absolutely continuous the object \eqref{eq:defdalpha} is the Caputo derivative of
Definition~\ref{def:caputo}.

\begin{lemma}[Regular modes and extension by density]\label{lem:modes}
Let $\mathcal{D}:=C^{\infty}_{0}\bigl((0,T);\LL\bigr)$ and let $n\in\mathbb{N}$ be fixed. Then
\begin{enumerate}[label=\textup{(\roman*)},leftmargin=2.4em]
\item $\mathcal{D}$ is dense in $L^{r}\bigl((0,T);\LL\bigr)$ for every $r\in[1,\infty)$;
\item if $f\in\mathcal{D}$, then $u_{n}\in AC\bigl([0,T];V_{n}\bigr)$, the object
\eqref{eq:defdalpha} is the classical Caputo derivative, and \eqref{eq:galerkin} holds
pointwise;
\item the map $f\mapsto u_{n}$ is continuous from $\LLtwo$ to $L^{2}\bigl((0,T);V_{n}\bigr)$ for
every $\alpha\in(0,1)$, and from $L^{r}\bigl((0,T);\LL\bigr)$ to $C\bigl([0,T];V_{n}\bigr)$
whenever $r>1/\alpha$.
\end{enumerate}
\end{lemma}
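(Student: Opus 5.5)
Part (i) is standard and I would dispatch it first. In $L^{r}\bigl((0,T);\LL\bigr)$ with $r<\infty$, simple functions $\sum_{k}\mathbf 1_{E_{k}}v_{k}$ with $v_{k}\in\LL$ are dense, by the definition of the Bochner space. Each indicator $\mathbf 1_{E_{k}}$ can be approximated in $L^{r}(0,T)$ by scalar functions in $C^{\infty}_{0}(0,T)$: first approximate by finite unions of intervals, then mollify and cut off near the endpoints. Multiplying by the fixed $v_{k}$ gives elements of $\mathcal{D}$. This is where $r<\infty$ is used.

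For (ii) I would work mode by mode with $F=f_{j,n}$. Since $f\in\mathcal{D}$, $F\in C^{\infty}_{c}(0,T)$, in particular $F\in C^{1}([0,T])$ with $F(0)=0$. The explicit solution \eqref{eq:cexplicit} is $c=\eta_{0}E_{\alpha,1}(-\lambda t^{\alpha})+K*F$.

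The first term is absolutely continuous on $[0,T]$. By \eqref{eq:MLder} its derivative is $-\eta_{0}\lambda t^{\alpha-1}E_{\alpha,\alpha}(-\lambda t^{\alpha})$, which is bounded by $\eta_{0}\lambda t^{\alpha-1}/\Gamma(\alpha)$ by \eqref{eq:MLbound}. This lies in $L^{1}(0,T)$, and the function is continuous at $0$ because $E_{\alpha,1}$ is entire.

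For the second term I would write $(K*F)(t)=\int_{0}^{t}K(\sigma)F(t-\sigma)\,d\sigma$ and differentiate under the integral. Because $F(0)=0$, this gives $(K*F)'=K*F'$. Since $K\in L^{1}$ and $F'\in L^{\infty}$, this derivative is continuous and bounded, so $K*F\in C^{1}\subset AC$.

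Hence each $c_{j,n}$ is in $AC([0,T])$, and so is $u_{n}$, being a finite sum. Lemma~\ref{lem:scalar}(iii) then shows that $\partial^{\alpha}_{t}c_{j,n}+\lambda_{j}c_{j,n}=f_{j,n}$ holds a.e. with the classical Caputo derivative. This identifies \eqref{eq:defdalpha} with the Caputo derivative of Definition~\ref{def:caputo}. Because every term is continuous on $(0,T]$, the equality holds pointwise there, so \eqref{eq:galerkin} holds pointwise.

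For (iii) the map $f\mapsto u_{n}$ is linear, so boundedness suffices. The datum contribution is fixed and does not depend on $f$, so only the map $F\mapsto K*F$ needs to be estimated. By \eqref{eq:MLbound}, $0\le K(\sigma)\le\sigma^{\alpha-1}/\Gamma(\alpha)$, hence $\abs{K*F}\le\IL{\alpha}\abs{F}$ pointwise.

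Lemma~\ref{lem:RLprops}(i) with $p=2$ then gives $\norm{K*f_{j,n}}_{L^{2}}\le T^{\alpha}\Gamma(\alpha+1)^{-1}\norm{f}_{\LLtwo}$ for every $\alpha\in(0,1)$, using $\abs{f_{j,n}(t)}\le\norm{f(t)}_{\LL}$. Summing the $n$ modes yields the $L^{2}\bigl((0,T);V_{n}\bigr)$ bound.

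For the $C([0,T];V_{n})$ bound with $r>1/\alpha$, I would apply H\"older's inequality:
\[
\abs{(K*F)(t)}\le\Gamma(\alpha)^{-1}\norm{\sigma^{\alpha-1}}_{L^{r'}(0,T)}\norm{F}_{L^{r}}.
\]
The norm of $\sigma^{\alpha-1}$ is finite exactly when $(1-\alpha)r'<1$, which is equivalent to $r>1/\alpha$. This gives a uniform sup bound. Continuity in $t$ of $K*F$ holds because $K\in L^{r'}$, $F\in L^{r}$, and the convolution of such functions is continuous, e.g. by continuity of translations in $L^{r'}$, or as stated in Lemma~\ref{lem:scalar}(i).

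I expect the main obstacle to be the absolute continuity claim in (ii), specifically justifying the differentiation of $K*F$ when $K$ has its $\sigma^{\alpha-1}$ singularity. I would handle it by writing the convolution with the shift placed on $F$ as above, and checking $K*F(t)-K*F(0)=\int_{0}^{t}(K*F')$ directly by Fubini. This avoids any appeal to differentiation under the integral.
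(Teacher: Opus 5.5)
Your proposal is correct and is essentially the paper's argument. It uses the same mode-by-mode treatment via \eqref{eq:cexplicit}, absolute continuity of the datum part from \eqref{eq:MLder} and of $K*F$ from $(K*F)'=K*F'$ with $F(0)=0$, Lemma~\ref{lem:scalar}(iii) to identify the Caputo derivative, and Young/H\"older bounds on $K*F$ for (iii); your domination $0\le K(\sigma)\le\sigma^{\alpha-1}/\Gamma(\alpha)$ merely makes $\norm{K}_{L^{1}}$ and $\norm{K}_{L^{r'}}$ explicit. The one step that needs an extra line is your claim that the Caputo term is continuous, which is what upgrades \emph{a.e.}\ to \emph{pointwise}. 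It holds because $\IL{1-\alpha}K=E_{\alpha,1}(-\lambda t^{\alpha})$, so that $\IL{1-\alpha}c'=-\eta_{0}\lambda E_{\alpha,1}(-\lambda t^{\alpha})+E_{\alpha,1}(-\lambda\,\cdot^{\alpha})*F'$, but it is not automatic for $\IL{1-\alpha}$ of an $L^{1}$ function; the paper's own proof stops at \emph{a.e.}
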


\begin{proof}
(i) is the density of $C^{\infty}_{0}$ in $L^{r}$ for $r<\infty$.

(ii) Since $V_{n}$ is finite-dimensional it suffices to treat one coefficient; we write $c$, $F$,
$\lambda$, $K$ as in \eqref{eq:cexplicit} and note first that $K\in L^{1}(0,T)$, because
$E_{\alpha,\alpha}$ is bounded on $[-\lambda T^{\alpha},0]$ and $\sigma^{\alpha-1}$ is integrable.
For the source part, $f\in\mathcal{D}$ makes $F$ smooth with $F(0)=0$, so that $(K*F)'=K*F'$ is
continuous \cite[Ch.~2]{gripenberg}; iterating, we obtain $K*F\in C^{\infty}([0,T])$. For the
data part, \eqref{eq:MLder} gives
$\frac{d}{dt}E_{\alpha,1}(-\lambda t^{\alpha})=-\lambda K(t)\in L^{1}(0,T)$, so that
$t\mapsto E_{\alpha,1}(-\lambda t^{\alpha})$ is absolutely continuous. Adding the two, we obtain
$c\in AC([0,T])$, and Lemma~\ref{lem:scalar}(iii) finishes the proof.

(iii) Only the dependence on $F$ is at issue, and by \eqref{eq:cexplicit} it is confined to
$K*F$. From Young's inequality for convolutions \cite[Thm.~4.15]{brezis} we obtain
$\norm{K*F}_{L^{2}}\le\norm{K}_{L^{1}}\norm{F}_{L^{2}}$, and $K\in L^{1}(0,T)$ for every
$\alpha>0$: integrability of the kernel is all we need. Control of the pointwise value is more
demanding. H\"older with conjugate exponents $r'$ and $r$ gives
$\abs{(K*F)(t)}\le\norm{K}_{L^{r'}(0,T)}\norm{F}_{L^{r}(0,T)}$, and $\norm{K}_{L^{r'}}$ is finite
exactly when $r'(\alpha-1)>-1$, that is when $r>1/\alpha$. Here the threshold is born: for the
averaged statement $K\in L^{1}$ suffices, while for the pointwise one we need $K\in L^{r'}$.
\end{proof}

\begin{remark}[How the density argument is used, and where]\label{rem:density}
Two of the tools below require $u_{n}\in AC\bigl([0,T];\LL\bigr)$, which \eqref{eq:unC} does not
give: the Alikhanov inequality and the integration by parts \eqref{eq:intbyparts}. The usual remedy applies in two
stages. We first prove everything for $f\in\mathcal{D}$, where Lemma~\ref{lem:modes}(ii) supplies
the regularity, and then extend by density. The extension is legitimate because the first stage
produces either an inequality $\norm{u_{n}}_{X}\le\Phi\bigl(\norm{f},\norm{g}\bigr)$, with $\Phi$
continuous and $\norm{\cdot}_{X}$ weakly lower semicontinuous, or an identity linear in
$(u_{n},\partial^{\alpha}_{t}u_{n},f)$. In the
case \Hthree{} we use only the $L^{2}$ branch of Lemma~\ref{lem:modes}(iii), which restricts
nothing; in the case \Hthreep{} a supremum in $t$ is involved and we use the $C([0,T])$ branch,
whose requirement $r>1/\alpha$ is met since $r>2/\alpha$.

From here on the energy estimates of Sections~\ref{sec:basic} and~\ref{sec:source}, and those of
Sections~\ref{sec:caseA} and~\ref{sec:caseB} that rest on them, are proved for $f\in\mathcal{D}$
and extended by density without further mention. Section~\ref{sec:spectral} is not subject to
this reduction: it works with the explicit representation \eqref{eq:modesofu}, which needs no
absolute continuity, and it states its own hypotheses on $f$. Neither is
Proposition~\ref{prop:sharp}(ii), whose source is constructed, not approximated.
\end{remark}

\subsection{The basic energy inequality}\label{sec:basic}

We multiply the $j$-th equation of \eqref{eq:modes} by $c_{j,n}(t)$ and sum over $j\le n$; the
same two identifications as before give
\begin{equation}\label{eq:basic}
\bigl(\partial^{\alpha}_{t}u_{n}(t),u_{n}(t)\bigr)_{\LL}+a\bigl(u_{n}(t),u_{n}(t)\bigr)
=\bigl(f(t),u_{n}(t)\bigr)_{\LL},\qquad\text{a.e. }t\in(0,T).
\end{equation}
We bound the three terms in turn. The first from below by Lemma~\ref{lem:alikhanov} with
$\gamma=\alpha$, applicable because $u_{n}\in AC\bigl([0,T];\LL\bigr)$ by
Lemma~\ref{lem:modes}(ii), which requires $f\in\mathcal{D}$; we work under that restriction for
the remainder of this subsection and remove it in Lemma~\ref{lem:dens}. The second from below by
coercivity \eqref{eq:coercive}; the third from
above by Cauchy--Schwarz followed by Young's inequality with a parameter $\varepsilon>0$,
\[
\bigl(f(t),u_{n}(t)\bigr)_{\LL}\le\frac{1}{2\varepsilon}\norm{f(t)}^{2}_{\LL}
+\frac{\varepsilon}{2}\norm{u_{n}(t)}^{2}_{\LL} .
\]
We keep $\varepsilon$ free because the stability constant of Theorem~\ref{thm:stability} depends
on it. Substituting and multiplying by $2$, we obtain
\begin{equation}\label{eq:diffineq}
\partial^{\alpha}_{t}\norm{u_{n}(t)}^{2}_{\LL}+2a_{0}\norm{u_{n}(t)}^{2}_{\HH}
\le\frac{1}{\varepsilon}\norm{f(t)}^{2}_{\LL}+\varepsilon\norm{u_{n}(t)}^{2}_{\LL} .
\end{equation}
From here on $\partial^{\alpha}_{t}$ acts on the scalar function $\norm{u_{n}(t)}^{2}_{\LL}$
rather than on a function with values in $V_{n}$, so that \eqref{eq:diffineq} is a scalar
fractional differential inequality, the form Lemma~\ref{lem:gronwall} requires.

We now apply $\IL{\alpha}$ to both sides. The inequality is preserved because $\IL{\alpha}$ is
order preserving, Lemma~\ref{lem:RLprops}(iii), and we treat the first term by \eqref{eq:ftfc}
with $u=\norm{u_{n}(\cdot)}^{2}_{\LL}$, which is absolutely continuous: it equals
$\sum_{j\le n}c_{j,n}^{2}$, each $c_{j,n}$ is absolutely continuous and bounded, and products of
such functions are again absolutely continuous. Since
$\norm{u_{n}(0)}^{2}_{\LL}=\norm{g_{n}}^{2}_{\LL}$, we obtain
\begin{equation}\label{eq:integralform}
\norm{u_{n}(t)}^{2}_{\LL}-\norm{g_{n}}^{2}_{\LL}
+2a_{0}\bigl(\IL{\alpha}\norm{u_{n}}^{2}_{\HH}\bigr)(t)
\le\frac{1}{\varepsilon}\bigl(\IL{\alpha}\norm{f}^{2}_{\LL}\bigr)(t)
+\varepsilon\bigl(\IL{\alpha}\norm{u_{n}}^{2}_{\LL}\bigr)(t) .
\end{equation}
The elliptic term on the left is non-negative, so that we may do two different things with
\eqref{eq:integralform}, and we use both: discard that term and keep information on the
$\LL$ norm, or discard instead $\norm{u_{n}(t)}^{2}_{\LL}$ and keep information on the $\HH$
norm. That is,
\begin{align}
\norm{u_{n}(t)}^{2}_{\LL}&\le\norm{g_{n}}^{2}_{\LL}
+\frac{1}{\varepsilon}\bigl(\IL{\alpha}\norm{f}^{2}_{\LL}\bigr)(t)
+\varepsilon\bigl(\IL{\alpha}\norm{u_{n}}^{2}_{\LL}\bigr)(t),\label{eq:energyL2}\\[2pt]
2a_{0}\bigl(\IL{\alpha}\norm{u_{n}}^{2}_{\HH}\bigr)(t)&\le\norm{g_{n}}^{2}_{\LL}
+\frac{1}{\varepsilon}\bigl(\IL{\alpha}\norm{f}^{2}_{\LL}\bigr)(t)
+\varepsilon\bigl(\IL{\alpha}\norm{u_{n}}^{2}_{\LL}\bigr)(t) .\label{eq:energyH1}
\end{align}
Both inequalities contain the same term in the datum,
$\bigl(\IL{\alpha}\norm{f}^{2}_{\LL}\bigr)(t)$, and everything we can extract from them turns on
whether that term is bounded uniformly in $t$.

\begin{lemma}[Extension of the identities and of the energy inequalities]\label{lem:dens}
Let $n\in\mathbb{N}$ be fixed and let $f\in\LLtwo$. Then
\begin{enumerate}[label=\textup{(\roman*)},leftmargin=2.4em]
\item $\partial^{\alpha}_{t}u_{n}=P_{n}f-A_{n}u_{n}$ in $L^{2}\bigl((0,T);V_{n}\bigr)$, where
$P_{n}$ is the orthogonal projection of $\LL$ onto $V_{n}$ and $A_{n}$ the restriction to $V_{n}$
of the operator of Lemma~\ref{lem:spectral}; in consequence
$f\mapsto\partial^{\alpha}_{t}u_{n}$ is continuous from $\LLtwo$ into
$L^{2}\bigl((0,T);V_{n}\bigr)$;
\item for every $\psi\in C^{\infty}_{0}\bigl((0,T);\HH\bigr)$,
\begin{equation}\label{eq:ibp-n}
\int_{0}^{T}\langle\partial^{\alpha}_{t}u_{n},\psi\rangle\,dt
=\int_{0}^{T}\bigl\langle u_{n},\RLD\psi\bigr\rangle\,dt
-\bigl\langle u_{n}(0),\bigl(\IR{1-\alpha}\psi\bigr)(0)\bigr\rangle ;
\end{equation}
\item the energy inequalities \eqref{eq:energyL2} and \eqref{eq:energyH1} hold for a.e.
$t\in(0,T)$, with the same constants.
\end{enumerate}
\end{lemma}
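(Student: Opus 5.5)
The plan is a density argument at fixed $n$. We approximate $f\in\LLtwo$ by $f^{k}\in\mathcal{D}$ with $f^{k}\to f$ in $\LLtwo$, which Lemma~\ref{lem:modes}(i) allows, and write $u_{n}^{k}$ for the corresponding Galerkin approximations. Each $u_{n}^{k}$ is absolutely continuous by Lemma~\ref{lem:modes}(ii), so (i)–(iii) hold for it: (i) by the construction \eqref{eq:defdalpha}, (ii) by Lemma~\ref{lem:ibp}, and (iii) by the computation of Section~\ref{sec:basic}. It remains to pass to the limit $k\to\infty$ in each statement, using only the $L^{2}$ branch of Lemma~\ref{lem:modes}(iii). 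By that branch, $u_{n}^{k}\to u_{n}$ in $L^{2}\bigl((0,T);V_{n}\bigr)$ for every $\alpha\in(0,1)$, and since $V_{n}$ is finite-dimensional all norms on it are equivalent.

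For (i), the definition \eqref{eq:defdalpha} already reads $\partial^{\alpha}_{t}u_{n}=P_{n}f-A_{n}u_{n}$. Indeed, $f_{n}=\sum_{j\le n}(f,w_{j})w_{j}=P_{n}f$, and $Au_{n}=\sum_{j\le n}\lambda_{j}c_{j,n}w_{j}=A_{n}u_{n}$ by \eqref{eq:eigen}. Continuity follows because $P_{n}$ is a contraction on $\LL$, $A_{n}$ is a bounded map on $V_{n}$ with norm $\lambda_{n}$, and $f\mapsto u_{n}$ is continuous into $L^{2}\bigl((0,T);V_{n}\bigr)$. In particular $\partial^{\alpha}_{t}u_{n}^{k}\to\partial^{\alpha}_{t}u_{n}$ in $L^{2}\bigl((0,T);V_{n}\bigr)$.

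For (ii), the identity \eqref{eq:ibp-n} holds for each $u_{n}^{k}$, and all of its terms are linear in $(u_{n}^{k},\partial^{\alpha}_{t}u_{n}^{k})$. The initial value $u_{n}^{k}(0)=g_{n}$ does not depend on $k$ at all. We pair against $\psi$ and $\RLD\psi$, both of which lie in $L^{2}\bigl((0,T);\HH\bigr)$ by \eqref{eq:rightRLbound}. Then $L^{2}$ convergence of $u_{n}^{k}$ and of $\partial^{\alpha}_{t}u_{n}^{k}$ lets us pass to the limit term by term.

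For (iii), each inequality has the form $\Lambda(u_{n}^{k})(t)\le\Phi(f^{k})(t)+\varepsilon\bigl(\IL{\alpha}\norm{u_{n}^{k}}^{2}_{\LL}\bigr)(t)$. By Lemma~\ref{lem:RLprops}(i) with $p=1$, $\IL{\alpha}$ is bounded on $L^{1}(0,T)$. We also have $\norm{f^{k}}^{2}_{\LL}\to\norm{f}^{2}_{\LL}$ and $\norm{u_{n}^{k}}^{2}_{\LL}\to\norm{u_{n}}^{2}_{\LL}$ in $L^{1}(0,T)$, and likewise for the $\HH$ norms, since on $V_{n}$ the $\HH$ norm is controlled by $\sqrt{\lambda_{n}/a_{0}}$ times the $\LL$ norm. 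Hence both sides converge in $L^{1}(0,T)$. Passing to an a.e.\ convergent subsequence then gives \eqref{eq:energyL2} and \eqref{eq:energyH1} for a.e.\ $t$, with the same $\varepsilon$ and $a_{0}$.

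I expect the main obstacle to be in (iii), and it is purely one of bookkeeping. The left side of \eqref{eq:energyL2} is the pointwise quantity $\norm{u_{n}(t)}^{2}_{\LL}$, which only makes sense a.e.\ under \Hthree{}, since $u_{n}$ need not be continuous when $\alpha\le\frac12$. This is why the statement asserts the inequalities only for a.e.\ $t$, and why we need $L^{1}$ convergence plus a subsequence rather than uniform convergence.
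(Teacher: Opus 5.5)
Your proposal is correct and follows essentially the same route as the paper. Part (i) is read off mode by mode from \eqref{eq:defdalpha}, with continuity from the $L^{2}$ branch of Lemma~\ref{lem:modes}(iii); parts (ii) and (iii) approximate $f$ by sources in $\mathcal{D}$, use that $u_{n}^{k}(0)=g_{n}$ does not move and that $\IL{\alpha}$ is bounded on $L^{1}(0,T)$, and extract an a.e.\ convergent subsequence, with your explicit constant $\sqrt{\lambda_{n}/a_{0}}$ simply making concrete the norm equivalence on $V_{n}$ that the paper leaves to finite-dimensionality.
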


\begin{proof}
(i) The $j$-th mode satisfies \eqref{eq:modes}, that is
$\partial^{\alpha}_{t}c_{j,n}=f_{j,n}-\lambda_{j}c_{j,n}$, which is the $j$-th coefficient of the
asserted identity; its right-hand side lies in $L^{2}(0,T)$ because $f_{j,n}$ does by
Cauchy--Schwarz and $c_{j,n}$ does by Lemma~\ref{lem:modes}(iii). On the finite-dimensional
$V_{n}$ the operators $P_{n}$ and $A_{n}$ are bounded, so the continuity follows from that of
$f\mapsto u_{n}$, which is the $L^{2}$ branch of Lemma~\ref{lem:modes}(iii).

(ii) Let $f_{m}\in\mathcal{D}$ with $f_{m}\to f$ in $\LLtwo$, which Lemma~\ref{lem:modes}(i)
provides, and write $u^{m}_{n}$ for the corresponding approximation. For each $m$ the identity
holds, by Lemma~\ref{lem:ibp} together with Lemma~\ref{lem:modes}(ii). Both sides are linear and
continuous in the triple $\bigl(\partial^{\alpha}_{t}u_{n},u_{n},u_{n}(0)\bigr)$ for the norms of
$L^{2}\bigl((0,T);V_{n}\bigr)$ twice over and of $\LL$ once: on the left by Cauchy--Schwarz, on
the right by \eqref{eq:rightRLbound} and because $\bigl(\IR{1-\alpha}\psi\bigr)(0)$ is one fixed
element of $\HH$. The first two entries converge, by (i) and by Lemma~\ref{lem:modes}(iii), and
the third does not move at all: $u^{m}_{n}(0)=g_{n}$ for every $m$ by \eqref{eq:gn}, the initial
value being independent of the source. The identity therefore passes to the limit.

(iii) We argue with the same sequence, for which each inequality holds by the derivation above,
and examine the four terms. From $u^{m}_{n}\to u_{n}$ in $L^{2}\bigl((0,T);\LL\bigr)$ we get
$\norm{u^{m}_{n}(\cdot)}^{2}_{\LL}\to\norm{u_{n}(\cdot)}^{2}_{\LL}$ in $L^{1}(0,T)$, and likewise
$\norm{f_{m}(\cdot)}^{2}_{\LL}\to\norm{f(\cdot)}^{2}_{\LL}$; the term in
$\norm{u^{m}_{n}}^{2}_{\HH}$ is handled the same way, the convergence of
Lemma~\ref{lem:modes}(iii) taking place in $V_{n}$, where the two norms are equivalent. Since
$\IL{\alpha}$ is bounded on $L^{1}(0,T)$ by Lemma~\ref{lem:RLprops}(i), the three fractional
integrals converge in $L^{1}(0,T)$ as well, while the datum term does not depend on $m$. Along a
subsequence all four terms therefore converge for a.e. $t$ at once, and the inequality survives
the limit.
\end{proof}

\begin{remark}[What the extension covers]\label{rem:denswhere}
Lemma~\ref{lem:dens} is the explicit form of the programme announced in
Remark~\ref{rem:density}, and it carries the whole weight of the appeals to absolute continuity
made later: the two energy inequalities, obtained above through Lemma~\ref{lem:alikhanov}, hold
for every $f\in\LLtwo$ by part (iii), and with them the a priori bounds of
Sections~\ref{sec:boundsA} and~\ref{sec:boundsB}, which follow from those inequalities and from
nothing else; the identification of the limit in Lemma~\ref{lem:identification} uses
\eqref{eq:ibp-n}, available now at the same generality by part (ii). Under \Hthreep{} the proof
applies verbatim with $\LLr$ in place of $\LLtwo$, the relevant branch of
Lemma~\ref{lem:modes}(iii) being then the one into $C\bigl([0,T];V_{n}\bigr)$, whose requirement
$r>1/\alpha$ holds because $r>2/\alpha$.
\end{remark}

\subsection{The term
\texorpdfstring{$\bigl(\IL{\alpha}\norm{f}^{2}_{\LL}\bigr)(t)$}{I(alpha)|f|2}}\label{sec:source}

Under \Hthree{} we know only that the integrand $h:=\norm{f(\cdot)}^{2}_{\LL}$ lies in
$L^{1}(0,T)$, and that is not enough. Nor is this for want of a sharper argument: the bound does
not exist, and it does not exist for any exponent below the threshold.

\begin{proposition}[No uniform bound below the threshold]\label{prop:nobound}
Let $\alpha\in(0,1)$ and let $1\le p<1/\alpha$. Then
\begin{enumerate}[label=\textup{(\roman*)},leftmargin=2.4em]
\item there is no constant $C(T,\alpha,p)$ with
\begin{equation}\label{eq:nobound}
\sup_{t\in(0,T]}\bigl(\IL{\alpha}h\bigr)(t)\le C(T,\alpha,p)\norm{h}_{L^{p}(0,T)}
\qquad\text{for every }0\le h\in L^{p}(0,T);
\end{equation}
\item consequently, for every $r\in[2,2/\alpha)$ there is no constant $C$ with
$\sup_{t\le T}\bigl(\IL{\alpha}\norm{f}^{2}_{\LL}\bigr)(t)\le C\norm{f}^{2}_{\LLr}$ for every
$f\in\LLr$. The case $r=2$ is \Hthree.
\end{enumerate}
\end{proposition}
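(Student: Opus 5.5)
For (i) we build a concentrating family, as announced in the notation subsection, and show that the ratio of the two sides of \eqref{eq:nobound} is unbounded along it. For $\varrho\in(0,T)$ take $h_{\varrho}:=\mathbf 1_{(T-\varrho,T)}$. Then $\norm{h_{\varrho}}_{L^{p}(0,T)}=\varrho^{1/p}$. Evaluating at $t=T$ gives
\[
\bigl(\IL{\alpha}h_{\varrho}\bigr)(T)=\frac{1}{\Gamma(\alpha)}\int_{T-\varrho}^{T}(T-s)^{\alpha-1}ds=\frac{\varrho^{\alpha}}{\Gamma(\alpha+1)} .
\]
Since $\IL{\alpha}h_{\varrho}$ is continuous on $(0,T]$, the supremum is at least this value. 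The ratio is therefore at least $\varrho^{\alpha-1/p}/\Gamma(\alpha+1)$. Because $p<1/\alpha$ the exponent $\alpha-1/p$ is negative, so the ratio tends to $\infty$ as $\varrho\to0$, and no constant $C(T,\alpha,p)$ can exist.

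One could instead use the singular function $h(s)=(T-s)^{-\beta}$ with $\alpha<\beta<1/p$. It lies in $L^{p}$ and has $\bigl(\IL{\alpha}h\bigr)(T)=\infty$, which defeats the inequality outright. The indicator family is cleaner, however, since it avoids discussing whether the supremum is attained at a point where $\IL{\alpha}h$ is finite. Continuity at $T$ for the indicator is elementary: its fractional integral is explicitly $\bigl((t-T+\varrho)_{+}\bigr)^{\alpha}/\Gamma(\alpha+1)$.

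For (ii), set $h:=\norm{f(\cdot)}^{2}_{\LL}$ and $p:=r/2$. Then $\norm{h}_{L^{p}(0,T)}=\norm{f}^{2}_{\LLr}$, and the condition $r\in[2,2/\alpha)$ is exactly $p\in[1,1/\alpha)$. Every nonnegative $h\in L^{p}(0,T)$ arises this way: fix $w\in\LL$ with $\norm{w}_{\LL}=1$ and put $f(x,t):=h(t)^{1/2}w(x)$, so that $f\in\LLr$. A constant $C$ in (ii) would therefore yield \eqref{eq:nobound} with $C(T,\alpha,p)=C$, contradicting (i). Concretely, $f_{\varrho}:=\mathbf 1_{(T-\varrho,T)}(t)\,w(x)$ is the concentrating family in this setting. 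The case $r=2$ is admissible because $2<2/\alpha$ for $\alpha<1$.

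No step is a real obstacle. The only point needing care is the exponent bookkeeping $p=r/2$, which turns the window $[2,2/\alpha)$ into $[1,1/\alpha)$, together with checking that the pointwise evaluation at $T$ legitimately bounds the supremum from below.
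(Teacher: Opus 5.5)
Your argument is correct and is essentially the paper's. The paper uses the same concentrating indicators, normalised as $\varrho^{-1/p}\mathbf{1}_{[t_{0}-\varrho,t_{0}]}$ at an interior point $t_{0}\in(0,T)$ rather than left unnormalised at $t_{0}=T$. It obtains the same blow-up rate $\varrho^{\alpha-1/p}/\Gamma(\alpha+1)$ and deduces (ii) by the same substitution $p=r/2$, $f=h^{1/2}w$, so the differences are purely cosmetic.
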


\begin{proof}
(i) Fix $t_{0}\in(0,T)$ and, for $\varrho\in(0,t_{0})$, set
$h_{\varrho}:=\varrho^{-1/p}\mathbf{1}_{[t_{0}-\varrho,t_{0}]}$, so that
$\norm{h_{\varrho}}_{L^{p}(0,T)}=1$. The change of variable $\sigma=t_{0}-s$ gives
\[
\bigl(\IL{\alpha}h_{\varrho}\bigr)(t_{0})
=\frac{\varrho^{-1/p}}{\Gamma(\alpha)}\int_{0}^{\varrho}\sigma^{\alpha-1}d\sigma
=\frac{\varrho^{\,\alpha-1/p}}{\Gamma(\alpha+1)}\xrightarrow[\varrho\to0^{+}]{}\infty,
\]
the exponent being negative exactly because $p<1/\alpha$, while the right-hand side of
\eqref{eq:nobound} stays equal to $C(T,\alpha,p)$.

(ii) Put $p:=r/2$, which satisfies $1\le p<1/\alpha$ precisely when $2\le r<2/\alpha$, and take
$f_{\varrho}:=h_{\varrho}^{1/2}w$ with $w\in\LL$, $\norm{w}_{\LL}=1$. Then
$\norm{f_{\varrho}(t)}^{2}_{\LL}=h_{\varrho}(t)$ and
$\norm{f_{\varrho}}^{2}_{\LLr}=\norm{h_{\varrho}}_{L^{r/2}(0,T)}=1$, so (i) applies
verbatim. For $r=2$ the family is
$f_{\varrho}(x,t)=\varrho^{-1/2}\mathbf{1}_{[t_{0}-\varrho,t_{0}]}(t)w(x)$.
\end{proof}

The operator $\IL{\alpha}$ weights the past of the source
by $(t-s)^{\alpha-1}$, unbounded near $s=t$. Hypothesis \Hthree{} controls total energy, not
distribution in time; the family $f_{\varrho}$ exploits the gap, moving a fixed amount of
energy into an ever shorter interval placed where the weight is largest. For $\alpha=1$ the
weight is constant and the phenomenon disappears; it worsens as $\alpha$ decreases. Hypothesis \Hthreep{} forbids such concentration, and restores the bound.

\begin{proposition}[Exact threshold]\label{prop:threshold}
Let $\alpha\in(0,1)$. Then
\begin{enumerate}[label=\textup{(\roman*)},leftmargin=2.4em]
\item if $p>1/\alpha$ has conjugate $p'$ and $h\in L^{p}(0,T)$ is non-negative, then, for every
$t\in[0,T]$,
\begin{equation}\label{eq:mapping}
\bigl(\IL{\alpha}h\bigr)(t)\le\kappa_{\alpha}(p,T)\norm{h}_{L^{p}(0,T)},
\qquad
\kappa_{\alpha}(p,T):=\frac{1}{\Gamma(\alpha)}
\left(\frac{T^{\,p'(\alpha-1)+1}}{p'(\alpha-1)+1}\right)^{1/p'} ;
\end{equation}
\item under \Hthreep{} the exponent $p:=r/2$ satisfies $p>1/\alpha$, and (i) applies to
$h=\norm{f(\cdot)}^{2}_{\LL}$:
\begin{equation}\label{eq:kappa}
\bigl(\IL{\alpha}\norm{f}^{2}_{\LL}\bigr)(t)
\le\kappa_{\alpha}\!\left(\tfrac r2,T\right)\norm{f}^{2}_{\LLr},
\end{equation}
uniformly in $t\in[0,T]$;
\item if $f\in\LLtwo$ and $h:=\norm{f(\cdot)}^{2}_{\LL}$, then $h\in L^{1}(0,T)$, and $p=1$ does
not satisfy the hypothesis $p>1/\alpha$ of (i) for any $\alpha\in(0,1)$.
\end{enumerate}
\end{proposition}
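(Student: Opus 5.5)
Part (i) is a single application of H\"older's inequality to the kernel. For fixed $t\in(0,T]$ we write
\[
\bigl(\IL{\alpha}h\bigr)(t)=\frac{1}{\Gamma(\alpha)}\int_{0}^{t}(t-s)^{\alpha-1}h(s)\,ds
\le\frac{1}{\Gamma(\alpha)}\left(\int_{0}^{t}(t-s)^{p'(\alpha-1)}ds\right)^{1/p'}\norm{h}_{L^{p}(0,t)} .
\]
We then use the integrability criterion of Section~\ref{sec:tools} with $m=p'(\alpha-1)$. We have $m>-1$ exactly when $p'(1-\alpha)<1$. Since $1/p'=1-1/p$, this is equivalent to $1-\alpha<1-1/p$, that is to $p>1/\alpha$. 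Under that condition the inner integral equals $t^{m+1}/(m+1)$. Because $m+1>0$, this quantity is non-decreasing in $t$ and hence at most $T^{m+1}/(m+1)$. We also use $\norm{h}_{L^{p}(0,t)}\le\norm{h}_{L^{p}(0,T)}$. Together these give \eqref{eq:mapping} with exactly the constant $\kappa_{\alpha}(p,T)$ displayed, for every $t\in(0,T]$. At $t=0$ the integral is empty and the bound is trivial.

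The one point to handle with care is the case $p=\infty$, with $p'=1$. The same computation applies there, giving $m=\alpha-1>-1$ always and the constant $T^{\alpha}/\Gamma(\alpha+1)$. This agrees with the formula, so no separate case split is needed beyond noting that $(\cdot)^{1/p'}$ is read with $p'=1$. Non-negativity of $h$ is not really needed for the estimate, which bounds $\abs{\IL{\alpha}h}$. We keep it only because the statement is phrased for the application to $h=\norm{f}^{2}_{\LL}$.

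Part (ii) is a substitution into (i). Under \Hthreep{} we have $r>2/\alpha$, so $p:=r/2>1/\alpha$. The function $h=\norm{f(\cdot)}^{2}_{\LL}$ is non-negative and measurable. It lies in $L^{p}(0,T)$ with
\[
\norm{h}_{L^{p}(0,T)}=\Bigl(\int_{0}^{T}\norm{f(t)}^{r}_{\LL}dt\Bigr)^{2/r}=\norm{f}^{2}_{\LLr} .
\]
Inserting this into \eqref{eq:mapping} yields \eqref{eq:kappa}, uniformly in $t$.

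Part (iii) is immediate. First, $\norm{h}_{L^{1}(0,T)}=\norm{f}^{2}_{\LLtwo}<\infty$. Second, for $p=1$ the condition $p>1/\alpha$ reads $\alpha>1$, which is excluded on $(0,1)$. It is worth recording the reason in the same terms: $p=1$ gives $p'=\infty$, so one would need $\sup_{s<t}(t-s)^{\alpha-1}<\infty$, which fails for every $\alpha<1$. This is consistent with Proposition~\ref{prop:nobound}(i) at $p=1<1/\alpha$. There is no real obstacle in this proposition. The only step needing attention is the equivalence $m>-1\iff p>1/\alpha$, together with the monotonicity in $t$ that makes the bound uniform.
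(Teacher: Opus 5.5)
Your proposal is correct and follows the paper's proof essentially step for step: H\"older with exponents $p'$ and $p$ against the kernel, the integrability condition $p'(\alpha-1)>-1$ recast as $p>1/\alpha$, and monotonicity in $t$ to reach the constant evaluated at $T$, followed by the bookkeeping $\norm{h}_{L^{r/2}(0,T)}=\norm{f}^{2}_{\LLr}$ and $\norm{h}_{L^{1}(0,T)}=\norm{f}^{2}_{\LLtwo}$ for (ii) and (iii). Your side remarks on $p=\infty$ and on non-negativity of $h$ being inessential are accurate but go beyond what the paper's proof uses.
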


\begin{proof}
Part (i) is the classical mapping property of the Riemann--Liouville integral into $L^{\infty}$,
\cite[Ch.~1, \S3]{samko}, \cite[Sec.~2.1]{kilbas}; we record the constant because
Theorem~\ref{thm:stability} needs its value. By H\"older with the conjugate exponents $p'$ and
$p$,
\[
\bigl(\IL{\alpha}h\bigr)(t)\le\frac{1}{\Gamma(\alpha)}
\left(\int_{0}^{t}(t-s)^{p'(\alpha-1)}ds\right)^{1/p'}
\left(\int_{0}^{t}h^{p}\right)^{1/p},
\]
and after the change $\sigma=t-s$ the first integral converges precisely when
$p'(\alpha-1)>-1$, in which case it equals $t^{\,p'(\alpha-1)+1}/\bigl(p'(\alpha-1)+1\bigr)$, an
increasing function of $t$ and therefore bounded by its value at $T$. The condition
$p'(\alpha-1)>-1$ reads $p'<1/(1-\alpha)$ and, by conjugation, $p>1/\alpha$.

Parts (ii) and (iii) are the two ways of reading (i) against the hypotheses of
Section~\ref{sec:hyp}, and we do the bookkeeping explicitly, because the exponent that matters is
not the one in which the source is measured but the one in which its squared norm is. We set
$h:=\norm{f(\cdot)}^{2}_{\LL}$.

Under \Hthreep{} we have $f\in\LLr$, that is $\norm{f(\cdot)}_{\LL}\in L^{r}(0,T)$, and
therefore $h\in L^{r/2}(0,T)$ with
\[
\norm{h}_{L^{r/2}(0,T)}
=\left(\int_{0}^{T}\norm{f(t)}^{r}_{\LL}dt\right)^{2/r}
=\norm{f}^{2}_{\LLr}.
\]
The requirement $p>1/\alpha$ of (i) becomes $r/2>1/\alpha$, that is $r>2/\alpha$, which is
\Hthreep{} verbatim. Applying (i), we obtain \eqref{eq:kappa}. The exponent of \Hthreep{} has,
therefore, no origin other than this one step: it is what makes $h$ integrable to a power high
enough for $\IL{\alpha}$ to map it into $L^{\infty}$.

Under \Hthree{} we have $f\in\LLtwo$, so $\norm{f(\cdot)}_{\LL}\in L^{2}(0,T)$ and
$h\in L^{1}(0,T)$ with $\norm{h}_{L^{1}(0,T)}=\norm{f}^{2}_{\LLtwo}$. Moreover $L^{1}$ is all that
\Hthree{} gives, since a function in $L^{2}(0,T)$ need have its square in no $L^{p}$ with $p>1$.
The exponent is thus $p=1$, and $1>1/\alpha$ is false for every $\alpha\in(0,1)$ because
$1/\alpha>1$ there. The failure is not marginal: \Hthree{} misses the admissible range by the
whole interval $[1,1/\alpha]$, which widens as $\alpha$ decreases. This proves (iii).
\end{proof}

\begin{remark}[The excluded range is genuinely excluded]\label{rem:endpoint}
Part (iii) asserts only that the argument of (i) does not apply under \Hthree, which would leave
room for some other argument. There is none, and not merely at $p=1$:
Proposition~\ref{prop:nobound} exhibits, for every $p<1/\alpha$, non-negative $h\in L^{p}(0,T)$
whose fractional averages are unbounded, so no estimate of the form \eqref{eq:mapping} can hold
below the threshold. Propositions~\ref{prop:nobound} and~\ref{prop:threshold} are thus the two
halves of one statement: above $1/\alpha$ the bound holds with an explicit constant, below it no
bound exists.

One exponent is left over, $p=1/\alpha$ itself, and two things make it worth settling. Calling the
threshold exact is otherwise premature, the admissible range being known only to lie between
$(1/\alpha,\infty)$ and $[1/\alpha,\infty)$; and the same exponent returns, after a change of
variable, as the right end of the window of Section~\ref{sec:windowA}, where it is what decides
the case $\alpha=\frac12$. The concentrating family of Proposition~\ref{prop:nobound} cannot
settle it, since its value $\varrho^{\alpha-1/p}$ equals $1$ at $p=1/\alpha$: no single scale
diverges there, however fine it is made. What does settle it is a weight that concentrates at
every scale at once. Fix $t_{0}\in(0,T]$ and put
\begin{equation}\label{eq:endpointh}
h(s):=(t_{0}-s)^{-\alpha}\Bigl(\log\frac{e\,t_{0}}{t_{0}-s}\Bigr)^{-1}\mathbf{1}_{(0,t_{0})}(s),
\qquad s\in(0,T),
\end{equation}
positive and finite on $(0,t_{0})$ because $e\,t_{0}/(t_{0}-s)>e$ there. With $\sigma=t_{0}-s$ and
then $v=\log\bigl(e\,t_{0}/\sigma\bigr)$, the second substitution turning $d\sigma/\sigma$ into
$-dv$ and $\sigma\in(0,t_{0})$ into $v\in(1,\infty)$,
\begin{align*}
\int_{0}^{T}h^{1/\alpha}
&=\int_{0}^{t_{0}}\sigma^{-1}\Bigl(\log\frac{e\,t_{0}}{\sigma}\Bigr)^{-1/\alpha}d\sigma
=\int_{1}^{\infty}v^{-1/\alpha}\,dv=\frac{\alpha}{1-\alpha},\\
\bigl(\IL{\alpha}h\bigr)(t_{0})
&=\frac{1}{\Gamma(\alpha)}\int_{0}^{t_{0}}\sigma^{-1}
\Bigl(\log\frac{e\,t_{0}}{\sigma}\Bigr)^{-1}d\sigma
=\frac{1}{\Gamma(\alpha)}\int_{1}^{\infty}\frac{dv}{v}=\infty ,
\end{align*}
where in the second line the factor $\sigma^{\alpha-1}$ of the kernel has cancelled the
$\sigma^{-\alpha}$ of $h$. The two integrands differ only in the power of the logarithm, and
$1/\alpha>1$ is exactly what separates the convergent case from the divergent one. So
$h\in L^{1/\alpha}(0,T)$ while its fractional average at $t_{0}$ is infinite; and because the
integrands $(t-s)^{\alpha-1}\mathbf{1}_{\{s<t\}}h(s)$ are non-negative and converge pointwise for
a.e. $s\in(0,t_{0})$ as $t\uparrow t_{0}$, Fatou's lemma upgrades this to
$\bigl(\IL{\alpha}h\bigr)(t)\to\infty$. Hence $\IL{\alpha}$ does not map $L^{1/\alpha}(0,T)$ into
$L^{\infty}(0,T)$, estimate \eqref{eq:mapping} fails at the endpoint as well, and the admissible
range is exactly $p>1/\alpha$. Hypothesis \Hthreep{} asks for a strict inequality and nothing
below rests on the endpoint by itself; what it buys is that the threshold is exact rather than
exact up to one exponent, and the function \eqref{eq:endpointh} is used once more in
Section~\ref{sec:windowA}.
\end{remark}

The two cases separate here: estimate \eqref{eq:kappa} is uniform in $t$ and closes an estimate
in $\LinfL$, but it needs \Hthreep, whereas Lemma~\ref{lem:timeint} controls only the integral in
$t$ and holds under \Hthree{} alone, yielding the energy scale $\LHone$. The exponent $2/\alpha$
is not a threshold of the problem: it is the exact threshold of the energy method, which works
with $\norm{u_{n}}^{2}_{\LL}$ and therefore needs $\norm{f}^{2}_{\LL}\in L^{r/2}$ with
$r/2>1/\alpha$. Section~\ref{sec:comparison} returns to this distinction with both cases in hand.

\subsection{Compactness and the passage to the limit}\label{sec:limit}

Four ingredients remain, common to both cases: a bound on the fractional derivative, a
compactness criterion, the identification of the limit, and a proposition that turns uniform
bounds into a weak solution.

\begin{proposition}[Bound in $\HM$]\label{prop:Hminus1}
Assume \Hthree, and suppose that $\norm{u_{n}}^{2}_{\LHone}\le M_{2}$ for every $n$. Then
$\{\partial^{\alpha}_{t}u_{n}\}$ is bounded in $\LHmone$.
\end{proposition}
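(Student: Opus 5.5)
The proof will go through the identity of Lemma~\ref{lem:dens}(i), $\partial^{\alpha}_{t}u_{n}=P_{n}f-A_{n}u_{n}$ in $L^{2}\bigl((0,T);V_{n}\bigr)$, which holds for every $f\in\LLtwo$. We bound the $\HM$ norm by duality. Fix $v\in\HH$ with $\norm{v}_{\HH}\le1$ and decompose it as $v=P_{n}v+(v-P_{n}v)$. Because $\partial^{\alpha}_{t}u_{n}(t)\in V_{n}$ and the $w_{j}$ are orthonormal in $\LL$, only the projected part contributes:
\[
\langle\partial^{\alpha}_{t}u_{n}(t),v\rangle
=\bigl(f(t),P_{n}v\bigr)_{\LL}-a\bigl(u_{n}(t),P_{n}v\bigr).
\]

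The single point needing care is that $P_{n}$ is uniformly bounded on $\HH$. This is where Lemma~\ref{lem:spectral} enters. The $w_{j}$ are orthogonal in $\HH$ with respect to the inner product $a(\cdot,\cdot)$, since $a(w_{j},w_{k})=\lambda_{j}\delta_{jk}$. Hence $P_{n}$ is also the $a$-orthogonal projection onto $V_{n}$, and
\[
a(P_{n}v,P_{n}v)\le a(v,v)\le a_{1}\norm{v}^{2}_{\HH}.
\]
Coercivity \eqref{eq:coercive} then gives
\[
\norm{P_{n}v}_{\HH}\le\sqrt{a_{1}/a_{0}}\,\norm{v}_{\HH},
\]
with a constant independent of $n$. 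This is the step where symmetry of $A$ is decisive. I expect it to be the main obstacle, in the sense that without it the Galerkin projection need not be $\HH$-stable, and it is the place to verify carefully that the $\LL$- and $a$-projections coincide.

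With this in hand we estimate the two terms. For the source term, Cauchy--Schwarz and Poincar\'e give
\[
\abs{(f,P_{n}v)}\le C_{P}\norm{f(t)}_{\LL}\norm{P_{n}v}_{\HH}.
\]
For the elliptic term, boundedness of $a$ gives
\[
\abs{a(u_{n},P_{n}v)}\le a_{1}\norm{u_{n}(t)}_{\HH}\norm{P_{n}v}_{\HH}.
\]
Taking the supremum over $v$ yields, for a.e.\ $t$,
\[
\norm{\partial^{\alpha}_{t}u_{n}(t)}_{\HM}\le\sqrt{a_{1}/a_{0}}\,\bigl(C_{P}\norm{f(t)}_{\LL}+a_{1}\norm{u_{n}(t)}_{\HH}\bigr).
\]
Squaring and integrating over $(0,T)$ gives
\[
\norm{\partial^{\alpha}_{t}u_{n}}^{2}_{\LHmone}\le\frac{2a_{1}}{a_{0}}\bigl(C_{P}^{2}\norm{f}^{2}_{\LLtwo}+a_{1}^{2}M_{2}\bigr),
\]
a bound uniform in $n$ that uses only \Hthree{} and the assumed bound on $\norm{u_{n}}^{2}_{\LHone}$.

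Measurability of $t\mapsto\norm{\partial^{\alpha}_{t}u_{n}(t)}_{\HM}$ is not an issue, because the function takes values in the finite-dimensional space $V_{n}$. No density argument is needed either, since Lemma~\ref{lem:dens}(i) already holds at the generality of \Hthree.
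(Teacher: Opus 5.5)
Your proof is correct and follows the paper's argument: duality against $v\in\HH$ with $\norm{v}_{\HH}\le1$, splitting $v$ into its projection onto $V_{n}$ plus a remainder that $\partial^{\alpha}_{t}u_{n}(t)\in V_{n}$ annihilates, then Cauchy--Schwarz with Poincar\'e, boundedness of $a$, squaring and integrating. The one difference is in your favour. The paper asserts $\norm{v_{1}}_{\HH}\le1$ directly from the simultaneous orthogonality of Lemma~\ref{lem:spectral}, which holds exactly only if $\HH$ is normed by $a(\cdot,\cdot)^{1/2}$; with the norm $\norm{\nabla\cdot}_{\LL}$ fixed in the Notation, your factor $\sqrt{a_{1}/a_{0}}$ is what the $a$-orthogonality actually gives, which changes the constant but not its independence of $n$.
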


\begin{proof}
We fix $t$ and take $v\in\HH$ with $\norm{v}_{\HH}\le1$; we write $v=v_{1}+v_{2}$ with $v_{1}$
the orthogonal projection onto $V_{n}$, orthogonal simultaneously in $\LL$ and $\HH$ by
Lemma~\ref{lem:spectral}, so that $\norm{v_{1}}_{\HH}\le1$. Since
$\partial^{\alpha}_{t}u_{n}(t)\in V_{n}$ we have
$\langle\partial^{\alpha}_{t}u_{n}(t),v_{2}\rangle=0$, and \eqref{eq:galerkin} extended by
linearity to $v_{1}$ gives
$\langle\partial^{\alpha}_{t}u_{n}(t),v\rangle=(f(t),v_{1})_{\LL}-a(u_{n}(t),v_{1})$. By
Cauchy--Schwarz with Poincar\'e and by the boundedness of $a$ in \eqref{eq:coercive},
\[
\norm{\partial^{\alpha}_{t}u_{n}(t)}_{\HM}
\le C_{P}\norm{f(t)}_{\LL}+a_{1}\norm{u_{n}(t)}_{\HH},
\]
and squaring, using $(x+y)^{2}\le2x^{2}+2y^{2}$ and integrating, we obtain
$\norm{\partial^{\alpha}_{t}u_{n}}^{2}_{\LHmone}
\le2C_{P}^{2}\norm{f}^{2}_{\LLtwo}+2a_{1}^{2}M_{2}$, a bound independent of $n$.
\end{proof}

The estimate adds no new information about $u_{n}$; it reads it off the equation. We need it
twice, and both times without substitute: it is the second hypothesis of the compactness
criterion below, and it is what places the limit inside the scale $W^{\alpha}_{2,2}$, whose
definition requires $\partial^{\alpha}_{t}u\in\LHmone$.

What we want next is strong convergence of $\{u_{n}\}$ in some $L^{p}\bigl((0,T);\LL\bigr)$. The
classical route is the Aubin--Lions--Simon lemma, and it is closed to us: that lemma asks for a
bound on $\partial_{t}u_{n}$, and we have no such bound, nor can we have one, since the equation
controls only $\partial^{\alpha}_{t}u_{n}$ with $\alpha<1$. The criterion of Li and Liu is built for this situation. It replaces the bound on the integer
derivative by one on the fractional derivative, and charges for the replacement in its first
hypothesis, which bounds the fractional average of $\norm{v}^{p}_{X_{0}}$ instead of the norm in
$L^{p}\bigl((0,T);X_{0}\bigr)$. That difference produces, under \Hthree, a restricted window of
admissible exponents.

\begin{lemma}[Compactness criterion of Li and Liu]\label{lem:liliu}
Let $X_{0}\hookrightarrow\hookrightarrow X_{1}\hookrightarrow X_{2}$ be Banach spaces with the
first embedding compact, let $\gamma\in(0,1)$, $p\in[1,\infty)$, and let
$W\subset L^{1}\bigl((0,T);X_{0}\bigr)$ satisfy
\begin{align}
&\sup_{v\in W}\ \sup_{t\in(0,T)}\ \bigl(\IL{\gamma}\norm{v}^{p}_{X_{0}}\bigr)(t)
\le C_{1},\label{eq:LL1}\\
&\sup_{v\in W}\ \norm{\partial^{\gamma}_{t}v}_{L^{\rho}((0,T);X_{2})}\le C_{2}
\quad\text{for some }\rho\in\Bigl(\tfrac{p}{1+p\gamma},\infty\Bigr)\cap[1,\infty).\label{eq:LL2}
\end{align}
Then $W$ is relatively compact in $L^{p}\bigl((0,T);X_{1}\bigr)$.
\end{lemma}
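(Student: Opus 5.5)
The plan is to reduce the statement to Simon's compactness theorem in $L^{p}\bigl((0,T);X_{1}\bigr)$. Simon's theorem asks for two things for a family $W$, given the compact embedding $X_{0}\hookrightarrow\hookrightarrow X_{1}$ and the continuous embedding $X_{1}\hookrightarrow X_{2}$ with $p<\infty$: first, that $W$ be bounded in $L^{p}\bigl((0,T);X_{0}\bigr)$; second, that the time translates satisfy $\norm{v(\cdot+\eta)-v}_{L^{p}((0,T-\eta);X_{2})}\to0$ as $\eta\to0^{+}$, uniformly in $v\in W$. Hypothesis \eqref{eq:LL1} will supply the first, and \eqref{eq:LL2} the second.

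\emph{Step 1: boundedness in $L^{p}\bigl((0,T);X_{0}\bigr)$.} For $0<s<t\le T$ the kernel satisfies $(t-s)^{\gamma-1}\ge T^{\gamma-1}$. Evaluating \eqref{eq:LL1} at $t$ close to $T$ and letting $t\uparrow T$ by monotone convergence gives
\[
\int_{0}^{T}\norm{v(s)}^{p}_{X_{0}}\,ds\le\Gamma(\gamma)\,T^{1-\gamma}C_{1}
\]
uniformly over $W$.

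\emph{Step 2: representation of $v$.} Each $v\in W$ has a Caputo derivative, and I will use the fundamental theorem \eqref{eq:ftfc} in the form
\[
v(t)=v(0)+\bigl(\IL{\gamma}w\bigr)(t),\qquad w:=\partial^{\gamma}_{t}v\in L^{\rho}\bigl((0,T);X_{2}\bigr).
\]
For $W$ of merely $L^{1}$ regularity this needs justification. I would either read it as the defining property of $\partial^{\gamma}_{t}v$ in the weak sense of \eqref{eq:spacedef}, or obtain it by mollification, since $\IL{\gamma}$ is continuous on $L^{1}$. The constant $v(0)$ cancels in every difference $v(t+\eta)-v(t)$, so its size never enters the estimates.

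\emph{Step 3: translation estimate.} This is the main step. Write $k(\sigma):=\sigma^{\gamma-1}\mathbf{1}_{\sigma>0}/\Gamma(\gamma)$, so that $\IL{\gamma}w=k*w$ with $w$ extended by zero outside $(0,T)$. Then
\[
v(t+\eta)-v(t)=\bigl(\bigl(k(\cdot+\eta)-k\bigr)*w\bigr)(t).
\]
By Young's convolution inequality with $1+\frac1p=\frac1q+\frac1\rho$,
\[
\norm{v(\cdot+\eta)-v}_{L^{p}((0,T-\eta);X_{2})}\le\norm{k(\cdot+\eta)-k}_{L^{q}(0,T)}\,C_{2}.
\]
Here $q\ge1$ because $\rho\le p$ may be assumed: if $\rho>p$, replace $\rho$ by $p$ using the boundedness of $(0,T)$. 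For the right-hand side to be finite we need $k\in L^{q}(0,T)$, that is $q(\gamma-1)>-1$, that is $\frac1q>1-\gamma$. Through the Young relation this reads $\frac1\rho-\frac1p<\gamma$, which is exactly $\rho>p/(1+p\gamma)$ in \eqref{eq:LL2}. Once $k\in L^{q}$, continuity of translations in $L^{q}$ gives $\norm{k(\cdot+\eta)-k}_{L^{q}}\to0$. For power kernels this can even be made explicit as $O(\eta^{\gamma-1+1/q})$. The convergence is therefore uniform over $W$.

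Steps 1 and 3, together with Simon's theorem, yield relative compactness in $L^{p}\bigl((0,T);X_{1}\bigr)$. I expect the main obstacle to be the bookkeeping in Step 3. The exponent range has to be checked carefully at the borderline cases $\rho=1$ and $\rho\ge p$. One must also confirm that the truncation of $k$ and $w$ to $(0,T)$ and the shift of the interval to $(0,T-\eta)$ introduce only a boundary piece $\int_{t}^{t+\eta}k(t+\eta-s)w(s)\,ds$, which is controlled by the same Young bound. The justification of the representation in Step 2 for general $v\in W$ is the other delicate point, and there I would rely on the mollification argument.
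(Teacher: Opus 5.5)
Your argument is correct, and it is essentially the proof of Li and Liu to which the paper defers. It has four ingredients: the $L^{p}\bigl((0,T);X_{0}\bigr)$ bound read off from \eqref{eq:LL1}; the Volterra form $v=v(0)+\IL{\gamma}\partial^{\gamma}_{t}v$, which follows from the weak definition with an assigned initial value by the argument of Lemma~\ref{lem:ftfcscalar}, so the mollification detour is not needed; a uniform translation estimate in $X_{2}$ by Young's inequality, in which $\rho>p/(1+p\gamma)$ is exactly $k\in L^{q}$ and the piece of the shifted kernel on $(-\eta,0)$ adds only $\norm{k}_{L^{q}(0,\eta)}\to0$; and Simon's translation criterion. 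Note that \eqref{eq:LL1} enters only through that $L^{p}$ bound, so your proof gives the lemma under mere boundedness of $W$ in $L^{p}\bigl((0,T);X_{0}\bigr)$. Combined with \eqref{eq:M2A}, Proposition~\ref{prop:Hminus1} and $\rho=2$, this already admits $p=2$ for every $\alpha\in(0,1)$, so the window $[1,2\alpha)$ of Proposition~\ref{prop:compactA} reflects the particular form of \eqref{eq:LL1}, not a limitation of the compactness route.
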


\begin{proof}
See \cite[Sec.~4]{liliu}.
\end{proof}

We always apply it with
\begin{equation}\label{eq:triple}
X_{0}=\HH,\qquad X_{1}=\LL,\qquad X_{2}=\HM,\qquad\gamma=\alpha,\qquad\rho=2,
\end{equation}
a triple whose first embedding is compact by \Hone. With $\rho=2$, hypothesis \eqref{eq:LL2} is
Proposition~\ref{prop:Hminus1} and the condition $\rho>p/(1+p\alpha)$ holds automatically for
$p\le2$. What has to be checked in each case is \eqref{eq:LL1}, and the exponents for which it can be
checked distinguish the two cases.

\begin{lemma}[Identification of the limit]\label{lem:identification}
Let $p\in[1,\infty)$ and suppose that, along a subsequence,
\begin{align}
u_{n}&\longrightarrow u &&\text{strongly in }L^{p}\bigl((0,T);\LL\bigr),\label{eq:strongconv}\\
u_{n}&\rightharpoonup u &&\text{weakly in }\LHone,\label{eq:weakconv}\\
\partial^{\alpha}_{t}u_{n}&\rightharpoonup\chi &&\text{weakly in }\LHmone .\label{eq:weakconvD}
\end{align}
Then
\begin{enumerate}[label=\textup{(\roman*)},leftmargin=2.4em]
\item the pair $(\chi,g)$ represents the Caputo derivative of $u$ with initial value $g$ in the
sense of Definition~\ref{def:space}; in particular $\chi=\partial^{\alpha}_{t}u$, $u(0)=g$ and
$u\in\WscaleG$;
\item for every $\psi\in C^{\infty}_{0}\bigl((0,T);\HH\bigr)$,
\begin{equation}\label{eq:limitidentity}
\int_{0}^{T}\langle\partial^{\alpha}_{t}u,\psi\rangle\,dt
=\int_{0}^{T}\bigl\langle u,\RLD\psi\bigr\rangle\,dt
-\bigl\langle g,\bigl(\IR{1-\alpha}\psi\bigr)(0)\bigr\rangle .
\end{equation}
\end{enumerate}
\end{lemma}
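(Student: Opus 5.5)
The plan is to pass to the limit in the finite-dimensional identity \eqref{eq:ibp-n} of Lemma~\ref{lem:dens}(ii). Fix $\psi\in C^{\infty}_{0}\bigl((0,T);\HH\bigr)$; for every $n$ of the subsequence,
\[
\int_{0}^{T}\langle\partial^{\alpha}_{t}u_{n},\psi\rangle\,dt
=\int_{0}^{T}\bigl\langle u_{n},\RLD\psi\bigr\rangle\,dt
-\bigl\langle g_{n},\bigl(\IR{1-\alpha}\psi\bigr)(0)\bigr\rangle ,
\]
where $u_{n}(0)=g_{n}$ by \eqref{eq:gn}. We take each of the three terms to the limit separately and then read off the two conclusions.

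On the left, $\psi\in\LHone$, which is the dual of $\LHmone$. The weak convergence \eqref{eq:weakconvD} therefore gives convergence to $\int_{0}^{T}\langle\chi,\psi\rangle\,dt$.

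For the first term on the right, \eqref{eq:rightRLbound} shows $\RLD\psi\in L^{\infty}\bigl((0,T);\HH\bigr)\subset\LHone$. Two routes are available:
\begin{enumerate}[label=\textup{(\alph*)},leftmargin=2.4em]
\item the weak convergence \eqref{eq:weakconv} in $\LHone$, tested against the functional $v\mapsto\int_{0}^{T}(v,\RLD\psi)_{\LL}\,dt$;
\item the strong convergence \eqref{eq:strongconv} in $L^{p}\bigl((0,T);\LL\bigr)$ together with $\RLD\psi\in L^{p'}\bigl((0,T);\LL\bigr)$, which holds for every $p'$ by the remark after \eqref{eq:rightRLbound}.
\end{enumerate}
Either route gives convergence to $\int_{0}^{T}\langle u,\RLD\psi\rangle\,dt$. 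The functional in (a) is bounded by the $\LHone$ norm because Poincar\'e controls the $\LL$ norm by the $\HH$ norm. Here the pairing $\langle\cdot,\cdot\rangle$ between $u_{n}(t)\in\LL$ and elements of $\HH$ is the $\LL$ inner product.

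For the datum term, $\bigl(\IR{1-\alpha}\psi\bigr)(0)$ is a fixed element of $\HH\subset\LL$, and $g_{n}\to g$ in $\LL$ by \eqref{eq:gnconv}. Hence this term converges to $\bigl\langle g,\bigl(\IR{1-\alpha}\psi\bigr)(0)\bigr\rangle$.

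Combining the three limits yields \eqref{eq:limitidentity} with $\chi$ in place of $\partial^{\alpha}_{t}u$. This is precisely \eqref{eq:spacedef} for the pair $(\chi,g)$. Since $u\in\LHone$ by \eqref{eq:weakconv} and $\chi\in\LHmone$, the pair represents the Caputo derivative of $u$ with initial value $g$. So $u\in\WscaleG$, and we may write $\chi=\partial^{\alpha}_{t}u$ and $u(0)=g$. By Remark~\ref{rem:pairunique}, $\chi$ is the unique first entry of an admissible pair with initial value $g$, for every $\alpha\in(0,1)$; this settles (i). Substituting $\chi=\partial^{\alpha}_{t}u$ into the limit identity then gives (ii).

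I expect no serious obstacle; the argument is bookkeeping of dualities. The one point requiring care is the datum term. We must use that the approximations start from $g_{n}$, which converges to $g$, rather than from some unspecified limit of $u_{n}(0)$. This is what pins the initial value to $g$ instead of leaving it ambiguous for $\alpha<\tfrac12$.
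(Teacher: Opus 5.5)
Your proposal is correct and is essentially the paper's own proof. You pass to the limit term by term in \eqref{eq:ibp-n}: \eqref{eq:weakconvD} tested against $\psi\in\LHone$ on the left, the strong convergence \eqref{eq:strongconv} against $\RLD\psi\in L^{q}\bigl((0,T);\HH\bigr)$ for every $q$ on the first right-hand term (your route (b), the one the paper takes), and $u_{n}(0)=g_{n}\to g$ on the datum term, then read off \eqref{eq:spacedef} for $(\chi,g)$; your route (a) is equally valid and shows that \eqref{eq:weakconv} alone already handles that term, so the strong convergence hypothesis is not actually needed for this lemma.
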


\begin{proof}
By Lemma~\ref{lem:dens}(ii) the identity \eqref{eq:ibp-n} is available for every $f\in\LLtwo$,
and not only for the regular sources of Lemma~\ref{lem:modes}(ii); we pass to the limit in it
term by term. The left-hand side converges to
$\int_{0}^{T}\langle\chi,\psi\rangle\,dt$ by \eqref{eq:weakconvD}, since $\psi\in\LHone$. In the
first term on the right, $\RLD\psi$ lies in $L^{q}\bigl((0,T);\HH\bigr)$ for every $q$, by
\eqref{eq:rightRLbound}, so the strong convergence \eqref{eq:strongconv} suffices whatever the
value of $p$, even $p=1$. The datum term converges because $u_{n}(0)=g_{n}\to g$ in $\LL$ by
\eqref{eq:gnconv}, while $\bigl(\IR{1-\alpha}\psi\bigr)(0)$ is a fixed element of $\HH$. The
limit of \eqref{eq:ibp-n} is precisely \eqref{eq:spacedef} for the pair $(\chi,g)$, which is
\eqref{eq:limitidentity}; as $\psi$ is arbitrary and $\chi\in\LHmone$ while $u\in\LHone$ by
\eqref{eq:weakconv}, we conclude that $u\in\WscaleG$ with $\partial^{\alpha}_{t}u=\chi$ and
$u(0)=g$.
\end{proof}

\begin{proposition}[From uniform bounds to a weak solution]\label{prop:package}
Assume \Hone, \Htwo, \Hthree, \Hfour, let $\alpha\in(0,1)$, and let $\{u_{n}\}$ be the Galerkin
sequence \eqref{eq:un}--\eqref{eq:gn}. Suppose that
\begin{enumerate}[label=\textup{(\alph*)},leftmargin=2.4em]
\item $\norm{u_{n}}^{2}_{\LHone}\le M_{2}<\infty$ for every $n$, and
\item for some $p\in[1,2]$ hypothesis \eqref{eq:LL1} holds, with the choices \eqref{eq:triple},
uniformly in $n$.
\end{enumerate}
Then there are a subsequence, which we do not relabel, and a function $u\in\WscaleG$ such that
\eqref{eq:strongconv}--\eqref{eq:weakconvD} hold with $\chi=\partial^{\alpha}_{t}u$, and $u$ is a
weak solution of \eqref{eq:P} in the sense of Definition~\ref{def:weak}.
\end{proposition}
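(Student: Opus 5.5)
The plan is to assemble the three ingredients already prepared: the bound (a), Proposition~\ref{prop:Hminus1}, and the compactness criterion of Lemma~\ref{lem:liliu}. Then identify the limit with Lemma~\ref{lem:identification}, and finally pass to the limit in the Galerkin equation tested against functions of product form.

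First, by (a) and Proposition~\ref{prop:Hminus1}, the sequence $\{\partial^{\alpha}_{t}u_{n}\}$ is bounded in $\LHmone$, while $\{u_{n}\}$ is bounded in $\LHone$. Both spaces are Hilbert, so by weak sequential compactness we extract a subsequence with \eqref{eq:weakconv} and \eqref{eq:weakconvD} for some $u\in\LHone$ and $\chi\in\LHmone$. Next, with the choices \eqref{eq:triple}, hypothesis (b) gives \eqref{eq:LL1} uniformly in $n$. Hypothesis \eqref{eq:LL2} holds with $\rho=2$, since $p\le2$ gives $p/(1+p\alpha)<2$. Lemma~\ref{lem:liliu} then yields relative compactness in $L^{p}\bigl((0,T);\LL\bigr)$, and a further subsequence converges strongly to some $\tilde u$.

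The strong limit $\tilde u$ must coincide with $u$. Strong convergence in $L^{p}((0,T);\LL)$ and weak convergence in $\LHone\hookrightarrow L^{1}((0,T);\LL)$ both imply convergence in the sense of $\LL$-valued distributions, so $\tilde u=u$ and \eqref{eq:strongconv} holds. Lemma~\ref{lem:identification} now applies: it gives $u\in\WscaleG$, $\chi=\partial^{\alpha}_{t}u$ and \eqref{eq:limitidentity}.

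It remains to show that $u$ satisfies \eqref{eq:weak}. We first treat test functions $\phi(t)=\theta(t)w_{j}$ with $\theta\in C^{\infty}_{0}(0,T)$ and $j$ fixed. For $n\ge j$, multiply \eqref{eq:galerkin} (valid in $L^{2}$ by Lemma~\ref{lem:dens}(i)) by $\theta$ and integrate in $t$. This gives
\[
\int_{0}^{T}\langle\partial^{\alpha}_{t}u_{n},\phi\rangle\,dt+\int_{0}^{T}a(u_{n},\phi)\,dt=\int_{0}^{T}(f,\phi)_{\LL}\,dt .
\]
The first term passes to the limit by \eqref{eq:weakconvD} and the second by \eqref{eq:weakconv}, since $\phi\in\LHone$ and $v\mapsto\int_0^T a(v,\phi)\,dt$ is a continuous linear functional on $\LHone$. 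Substituting \eqref{eq:limitidentity} for the limit of the first term turns the identity into \eqref{eq:weak} for this $\phi$. By linearity, \eqref{eq:weak} then holds for every $\phi\in C^{\infty}_{0}\bigl((0,T);V_{m}\bigr)$ and every $m$.

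The main obstacle is the density step: extending \eqref{eq:weak} from $\bigcup_{m}C^{\infty}_{0}((0,T);V_{m})$ to all $\phi\in C^{\infty}_{0}\bigl((0,T);\HH\bigr)$. I would use the projections $\phi_{m}:=P_{m}\phi$. Since $P_{m}$ is an orthogonal projection in $\HH$ as well (Lemma~\ref{lem:spectral}) and commutes with time operations, the derivatives $\partial_{t}\phi_{m}=P_{m}\partial_{t}\phi$ converge uniformly in $\HH$. By \eqref{eq:rightRLbound}, $\RLD\phi_{m}\to\RLD\phi$ in $L^{\infty}((0,T);\HH)$. Likewise $\phi_{m}\to\phi$ in $L^{\infty}((0,T);\HH)$, and $\bigl(\IR{1-\alpha}\phi_{m}\bigr)(0)\to\bigl(\IR{1-\alpha}\phi\bigr)(0)$ in $\HH$. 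Every term of \eqref{eq:weak} is continuous under these convergences, because $u\in\LHone$, $f\in\LLtwo$ and $g\in\LL$. The one subtle point is that $P_{m}\phi$ must have uniform convergence in $t$, not just pointwise. This follows from the uniform continuity of $\phi$ and $\partial_{t}\phi$ on the compact interval: their ranges are compact sets in $\HH$, and on compact sets the strong convergence $P_{m}\to\mathrm{Id}$ is uniform.
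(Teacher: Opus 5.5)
Your proof is correct. The extraction, the application of Lemma~\ref{lem:liliu} with $\rho=2$, the agreement of the strong and weak limits, and the appeal to Lemma~\ref{lem:identification} are exactly the paper's steps; where you diverge is the passage to the limit in the equation.

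The paper works with the $n$-dependent test function $\phi_{n}=P_{n}\phi$ and splits $\phi_{n}=(\phi_{n}-\phi)+\phi$. The cross terms are killed by the uniform bounds of (a) and Proposition~\ref{prop:Hminus1}, paired against $\phi_{n}-\phi\to0$ in $\LHone$; that convergence needs only pointwise convergence and dominated convergence. The paper thus reaches \eqref{eq:limitequation} for every test function before substituting \eqref{eq:limitidentity}.

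You instead fix the test function in $\bigcup_{m}C^{\infty}_{0}\bigl((0,T);V_{m}\bigr)$, so that weak convergence alone carries it to the limit, and then extend by density at the level of \eqref{eq:weak}. Because \eqref{eq:weak} contains $\RLD\phi$ and $\bigl(\IR{1-\alpha}\phi\bigr)(0)$, this costs you more on the test-function side. You need $P_{m}\phi\to\phi$ and $P_{m}\partial_{t}\phi\to\partial_{t}\phi$ uniformly in $t$ in $\HH$, which your compactness-of-range argument does supply. That argument tacitly uses $\sup_{m}\norm{P_{m}}_{\mathcal{L}(\HH)}<\infty$, which holds because $P_{m}$ is the orthogonal projection for the inner product $a$, equivalent to that of $\HH$ by \eqref{eq:coercive}.

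You could have avoided the uniform convergence altogether by extending \eqref{eq:limitequation} rather than \eqref{eq:weak}. That identity involves $\phi$ only through pairings in $\LHone$ and $\LLtwo$, and \eqref{eq:limitidentity} already holds for every test function. Both arguments are sound: the paper's asks less regularity of the projected test functions, while yours separates the limit in $n$ from the density step.
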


\begin{proof}
\emph{Extraction.} By (a) and Proposition~\ref{prop:Hminus1} the sequences $\{u_{n}\}$ and
$\{\partial^{\alpha}_{t}u_{n}\}$ are bounded in the Hilbert spaces $\LHone$ and $\LHmone$, so by
reflexivity \cite[Thm.~3.18, Cor.~3.30]{brezis} a subsequence satisfies \eqref{eq:weakconv} and
\eqref{eq:weakconvD}. By (a), (b) and Proposition~\ref{prop:Hminus1} both hypotheses of
Lemma~\ref{lem:liliu} hold, so $\{u_{n}\}$ is relatively compact in
$L^{p}\bigl((0,T);\LL\bigr)$ and a further subsequence satisfies \eqref{eq:strongconv}; the
strong and the weak limit agree, both being limits in $\mathcal{D}'\bigl((0,T);\LL\bigr)$.
Lemma~\ref{lem:identification} then gives $\chi=\partial^{\alpha}_{t}u$, $u(0)=g$ and
$u\in\WscaleG$.

\emph{Projection of the test function.} We fix $\phi\in C^{\infty}_{0}\bigl((0,T);\HH\bigr)$ and
set
\[
\phi_{n}(t):=\sum_{j\le n}(\phi(t),w_{j})_{\LL}w_{j}.
\]
For each $t$ this is the orthogonal
projection of $\phi(t)$ onto $V_{n}$ in $\LL$ and in $\HH$ alike, by Lemma~\ref{lem:spectral}, so
$\norm{\phi_{n}(t)-\phi(t)}\to0$ in both norms while $\norm{\phi_{n}(t)}\le\norm{\phi(t)}$; since
$\phi$ is continuous with compact support, dominated convergence gives
\begin{equation}\label{eq:phiconv}
\phi_{n}\longrightarrow\phi\quad\text{strongly in }\LHone\text{ and in }\LLtwo .
\end{equation}

\emph{Passage to the limit.} Multiplying \eqref{eq:galerkin} by the coefficients of $\phi_{n}$,
summing and integrating, we obtain
\begin{equation}\label{eq:galerkinweak}
\int_{0}^{T}\langle\partial^{\alpha}_{t}u_{n},\phi_{n}\rangle\,dt
+\int_{0}^{T}a(u_{n},\phi_{n})\,dt=\int_{0}^{T}(f,\phi_{n})_{\LL}\,dt .
\end{equation}
We split each of the first two terms as $\phi_{n}=(\phi_{n}-\phi)+\phi$. The pieces carrying
$\phi_{n}-\phi$ vanish in the limit: by Cauchy--Schwarz they are bounded by
$\norm{\partial^{\alpha}_{t}u_{n}}_{\LHmone}\norm{\phi_{n}-\phi}_{\LHone}$ and by
$a_{1}\norm{u_{n}}_{\LHone}\norm{\phi_{n}-\phi}_{\LHone}$ respectively, the
first factors being bounded by Proposition~\ref{prop:Hminus1} and (a) and the second tending to
zero by \eqref{eq:phiconv}. The pieces carrying $\phi$ converge by \eqref{eq:weakconvD} together
with Lemma~\ref{lem:identification}, and by \eqref{eq:weakconv}, since
$v\mapsto\int_{0}^{T}a(v,\phi)dt$ is a continuous linear functional on $\LHone$. The source term
converges by \eqref{eq:phiconv} and Cauchy--Schwarz, using \Hthree. Hence
\begin{equation}\label{eq:limitequation}
\int_{0}^{T}\langle\partial^{\alpha}_{t}u,\phi\rangle\,dt+\int_{0}^{T}a(u,\phi)\,dt
=\int_{0}^{T}(f,\phi)_{\LL}\,dt
\end{equation}
for every test function. Substituting \eqref{eq:limitidentity} for the first term, which is where
the initial datum reappears, and moving the datum to the right, we obtain exactly
\eqref{eq:weak}. Hence $u$ is a weak solution, with the prescribed initial datum.
\end{proof}

\subsection{Uniqueness}\label{sec:uniqueness}

Uniqueness needs neither of the two alternatives in its stronger form: it holds under \Hthree{}
alone and for every order of the subdiffusive range. The result is not new, see
\cite{zacher09,zacher19,kubica,sakamoto}, but we give the proof, because our notion of weak
solution is stated through the pair of Definition~\ref{def:space} and not in the form those
references use, and because the same spectral projection reappears in
Section~\ref{sec:spectral}. One scalar lemma is all the fractional calculus the argument needs.

\begin{lemma}[Scalar fundamental theorem]\label{lem:ftfcscalar}
Let $\alpha\in(0,1)$ and let $b,\chi\in L^{2}(0,T)$ satisfy
\begin{equation}\label{eq:scalarweak}
\int_{0}^{T}b(t)\,\bigl(\RLD\zeta\bigr)(t)\,dt=\int_{0}^{T}\chi(t)\,\zeta(t)\,dt
\qquad\forall\,\zeta\in C^{\infty}_{0}(0,T).
\end{equation}
Then $\bigl(\IL{\alpha}\chi\bigr)(t)=b(t)$ for a.e. $t\in(0,T)$.
\end{lemma}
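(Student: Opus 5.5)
The plan is to compare the two distributions $\IL{\alpha}\chi$ and $b$ by testing against a suitable class of functions, and to do this through the duality identity of Lemma~\ref{lem:RLprops}(iv) combined with the definition of $\RLD$. First note that $\IL{\alpha}\chi\in L^{2}(0,T)$ by Lemma~\ref{lem:RLprops}(i), so both sides are locally integrable. It therefore suffices to show
\[
\int_{0}^{T}\bigl(\IL{\alpha}\chi\bigr)(t)\,\eta(t)\,dt=\int_{0}^{T}b(t)\,\eta(t)\,dt\qquad\forall\,\eta\in C^{\infty}_{0}(0,T),
\]
since then the du Bois-Reymond lemma gives equality a.e.

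Fix $\eta\in C^{\infty}_{0}(0,T)$. By Lemma~\ref{lem:RLprops}(iv) with $\gamma=\alpha$, the left side equals $\int_{0}^{T}\chi\,\IR{\alpha}\eta\,dt$. Set $\zeta:=\IR{\alpha}\eta$. I would then check that
\[
\RLD\zeta=-\tfrac{d}{dt}\IR{1-\alpha}\IR{\alpha}\eta=-\tfrac{d}{dt}\IR{1}\eta=\eta,
\]
by the semigroup property of Lemma~\ref{lem:RLprops}(ii). If $\zeta$ were admissible in \eqref{eq:scalarweak}, this would give $\int b\,\eta=\int\chi\,\zeta=\int(\IL{\alpha}\chi)\eta$, which is the claim.

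The obstacle is that $\zeta=\IR{\alpha}\eta$ is not compactly supported in $(0,T)$. It vanishes to the right of $\operatorname{supp}\eta$, but near $t=0$ it equals a nonzero smooth function, the right-sided integral of $\eta$ over a region away from the singularity. So \eqref{eq:scalarweak} must be extended from $C^{\infty}_{0}(0,T)$ to test functions that are smooth on $[0,T)$, vanish near $T$, but not near $0$.

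I would do this by a cutoff $\theta_{k}(t)=\theta(kt)$, with $\theta=0$ on $[0,1]$ and $\theta=1$ on $[2,\infty)$, and set $\zeta_{k}=\theta_{k}\zeta\in C^{\infty}_{0}(0,T)$. Then $\int\chi\zeta_{k}\to\int\chi\zeta$ by dominated convergence. For the other side, $\RLD\zeta_{k}$ is given by \eqref{eq:RLDrep}. The difference $\RLD\zeta_{k}-\eta$ is the contribution of $(1-\theta_{k})\zeta$, a function supported in $[0,2/k]$ with derivative of size $O(k)$ there. By \eqref{eq:RLDrep}, its right derivative vanishes for $t>2/k$. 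For $t<2/k$ I would write it as
\[
-\frac{1}{\Gamma(1-\alpha)}\int_{t}^{2/k}(s-t)^{-\alpha}\,\partial_{s}\bigl[(1-\theta_{k})\zeta\bigr](s)\,ds,
\]
which is bounded by $Ck\cdot k^{\alpha-1}=Ck^{\alpha}$ on a set of length $2/k$. Its $L^{2}$ norm is then $O(k^{\alpha-1/2})$, and this tends to $0$ only for $\alpha<\tfrac12$.

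So the naive estimate fails for $\alpha\ge\tfrac12$, and this is the step to get right. The failure is exactly the $t^{-\alpha}$ phenomenon of Remark~\ref{rem:pairunique}: the boundary term $(\IR{1-\alpha}\zeta)(0)$ must be accounted for. Here I would instead pair with $b$ more carefully. Write $(1-\theta_{k})\zeta=\zeta(0)(1-\theta_{k})+O(t)$. The $O(t)$ part contributes $O(k^{\alpha-1})$ pointwise and vanishes in $L^{2}$. The constant part gives
\[
\zeta(0)\int_{0}^{T}b\,\RLD(1-\theta_{k})\,dt .
\]
To kill this term I would modify the cutoff so that the correction term has zero contribution. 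Concretely, I would choose $\theta_{k}$ with an additional correction supported in $[1/k,2/k]$ so that $\IR{1-\alpha}\bigl[(1-\theta_{k})\cdot\text{const}\bigr](0)$ matches. If that proves messy, the fallback is to apply \eqref{eq:scalarweak} with $\zeta_{k}$, then use the explicit form of the left side as $\IR{1-\alpha}$ of $\zeta_{k}'$ together with Lemma~\ref{lem:RLprops}(iv) moved onto $b$. This yields $\int(\IL{1-\alpha}b)\,\zeta_{k}'=-\int\chi\zeta_{k}$, i.e. $(\IL{1-\alpha}b)'=\chi$ in $\mathcal{D}'$. Hence $\IL{1-\alpha}b=\IL{1}\chi+c$, and applying $\IL{\alpha}$ gives $\IL{1}b=\IL{1}\IL{\alpha}\chi+c\,t^{\alpha}/\Gamma(1+\alpha)$.

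Differentiating, $b=\IL{\alpha}\chi+c\,t^{\alpha-1}/\Gamma(\alpha)$. The remaining task is to show $c=0$, i.e. $(\IL{1-\alpha}b)(0^{+})=0$. For $b\in L^{2}$ this holds because $|\IL{1-\alpha}b(t)|\le Ct^{1/2-\alpha}\norm{b}_{L^{2}}$ when $\alpha<\tfrac12$, while for $\alpha\ge\tfrac12$ the term $t^{\alpha-1}$ lies outside $L^{2}$ near $0$ when $\alpha\le\tfrac12$… I expect this last bookkeeping of the constant $c$ to be the true crux.
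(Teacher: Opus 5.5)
The step you leave open, showing $c=(\IL{1-\alpha}b)(0^{+})=0$, is not bookkeeping. For $\alpha\in(\frac12,1)$ it is false, and so is the statement.

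Take $b(t)=t^{\alpha-1}/\Gamma(\alpha)$, which lies in $L^{2}(0,T)$ exactly when $\alpha>\frac12$, and take $\chi=0$. Then $\IL{1-\alpha}b\equiv1$. By \eqref{eq:RLDrep} and Lemma~\ref{lem:RLprops}(iv), for every $\zeta\in C^{\infty}_{0}(0,T)$ we get $\int_{0}^{T}b\,\RLD\zeta\,dt=-\int_{0}^{T}(\IL{1-\alpha}b)\,\zeta'\,dt=-\int_{0}^{T}\zeta'\,dt=0$. So the hypothesis holds, yet $\IL{\alpha}\chi=0\neq b$.

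Your own reduction shows why in general. It gives $\int_{0}^{T}(b-\IL{\alpha}\chi)\,\eta\,dt=c\,(\IR{\alpha}\eta)(0)$. Adding $c\,t^{\alpha-1}/\Gamma(\alpha)$ to $b$ changes the left side of \eqref{eq:scalarweak} by $-c\int_{0}^{T}\zeta'\,dt=0$. Test functions with compact support in $(0,T)$ are therefore blind to $c$, which is why no modified cutoff can succeed. The paper's proof has the same hole: it claims that testing \eqref{eq:scalarweak} with $\zeta$ supported near $0$ yields $(\IL{1-\alpha}b)(0)=0$, and this fails for the same reason.

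What you do have is a complete proof for $\alpha\in(0,\frac12]$. Your cutoff argument works for $\alpha<\frac12$, the defect being $O(k^{\alpha-1/2})$ in $L^{2}$. The fallback closes on all of $(0,\frac12]$ once you finish the sentence you left dangling: $c\,t^{\alpha-1}=\Gamma(\alpha)(b-\IL{\alpha}\chi)\in L^{2}(0,T)$, while $t^{\alpha-1}\notin L^{2}(0,T)$ because $2\alpha-2\le-1$, so $c=0$. This also covers $\alpha=\frac12$, where your Cauchy--Schwarz bound is unavailable.

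For $\alpha>\frac12$ a correct version needs something more. One option is the extra hypothesis $(\IL{1-\alpha}b)(t)\to0$ as $t\downarrow0$. Another is test functions that are smooth on $[0,T]$ and vanish only near $T$; then your first route with $\zeta=\IR{\alpha}\eta$ works directly, with no cutoff.

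The failure also reaches Theorem~\ref{thm:uniqueness}. For $\alpha>\frac12$ let $U(t):=t^{\alpha-1}E_{\alpha,\alpha}(-\lambda_{1}t^{\alpha})\,w_{1}$. Termwise, $\IL{1-\alpha}U=E_{\alpha,1}(-\lambda_{1}t^{\alpha})\,w_{1}$. By \eqref{eq:MLder}, $U$ lies in $W^{\alpha}_{2,2}(0)$ with $\partial^{\alpha}_{t}U=-\lambda_{1}U$, and it satisfies \eqref{eq:weak} with $f=0$ and $g=0$, although $U\neq0$. So that theorem fails as formulated in this range.
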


\begin{proof}
By Lemma~\ref{lem:RLprops}(iv) with $\gamma=1-\alpha$, and by \eqref{eq:RLderright},
identity \eqref{eq:scalarweak} says that the function $\IL{1-\alpha}b$, which lies in
$L^{2}(0,T)$ by Lemma~\ref{lem:RLprops}(i), has distributional derivative $\chi\in L^{2}(0,T)$ on
$(0,T)$; hence $\IL{1-\alpha}b\in AC([0,T])$ with $\DL b=\chi$ a.e., and testing
\eqref{eq:scalarweak} with $\zeta$ supported near $0$ gives
$\bigl(\IL{1-\alpha}b\bigr)(0)=0$. Applying $\IL{\alpha}$ and using the composition formula
\[
\bigl(\IL{\alpha}\DL b\bigr)(t)=b(t)
-\bigl(\IL{1-\alpha}b\bigr)(0)\,\frac{t^{\alpha-1}}{\Gamma(\alpha)}
\]
of \cite[Sec.~2.1]{kilbas}, valid for $\IL{1-\alpha}b\in AC([0,T])$, we obtain
$\bigl(\IL{\alpha}\chi\bigr)=b$ a.e.
\end{proof}

\begin{theorem}[Uniqueness]\label{thm:uniqueness}
Assume \Hone, \Htwo, \Hthree{} and \Hfour, and let $\alpha\in(0,1)$. Then \eqref{eq:P} has at most one weak solution in the sense of Definition~\ref{def:weak}.
\end{theorem}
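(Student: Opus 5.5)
By linearity it suffices to show that a weak solution with $f=0$ and $g=0$ vanishes. Let $u_{1},u_{2}$ be weak solutions with the same data and set $u:=u_{1}-u_{2}\in\LHone$. Subtracting the two instances of \eqref{eq:weak}, the source terms cancel, and so do the datum terms, since both solutions carry the same $g$. Hence
\[
\int_{0}^{T}\!\!\int_{\Omega}u\,\RLD\phi\,dx\,dt+\int_{0}^{T}a(u,\phi)\,dt=0
\qquad\forall\,\phi\in C^{\infty}_{0}\bigl((0,T);\HH\bigr).
\]
As announced in Remark~\ref{rem:pairunique}, the argument will use only this identity and never the pair.

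Next I project onto the eigenbasis of Lemma~\ref{lem:spectral}. Fix $j$ and test with $\phi(t)=\zeta(t)w_{j}$, where $\zeta\in C^{\infty}_{0}(0,T)$. Because $\RLD$ acts only in time, $\RLD\phi=(\RLD\zeta)\,w_{j}$. By \eqref{eq:eigen} and the symmetry of $a$, $a(u(t),w_{j})=\lambda_{j}b_{j}(t)$ with $b_{j}(t):=(u(t),w_{j})_{\LL}\in L^{2}(0,T)$. The identity above therefore becomes
\[
\int_{0}^{T}b_{j}\,\RLD\zeta\,dt=\int_{0}^{T}\bigl(-\lambda_{j}b_{j}\bigr)\zeta\,dt\qquad\forall\,\zeta\in C^{\infty}_{0}(0,T).
\]
This is \eqref{eq:scalarweak} with $\chi=-\lambda_{j}b_{j}\in L^{2}(0,T)$. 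Lemma~\ref{lem:ftfcscalar} then gives $b_{j}+\lambda_{j}\IL{\alpha}b_{j}=0$ a.e. on $(0,T)$.

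This is the Volterra equation \eqref{eq:volterra} with $\eta_{0}=0$ and $F=0$, and $b_{j}=0$ is a solution of it. By the uniqueness of Lemma~\ref{lem:scalar}(ii) in $L^{1}(0,T)$, $b_{j}\equiv0$ a.e. for every $j$. Since $\{w_{j}\}$ is an orthonormal basis of $\LL$, it follows that $u(t)=0$ for a.e. $t$, and so $u_{1}=u_{2}$.

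The step that needs the most care is the passage from the weak identity to the scalar integral equation, i.e. checking that Lemma~\ref{lem:ftfcscalar} applies. Its content is that the vanishing datum term forces $(\IL{1-\alpha}b_{j})(0)=0$, so that no singular term $t^{\alpha-1}$ is left over. This is exactly where the prescribed initial value enters; without it one could only conclude $\IL{1-\alpha}b_{j}\in AC$. Everything else is routine: the membership $b_{j}\in L^{2}$ follows from $u\in\LHone$, and the factorisation $\RLD(\zeta w_{j})=(\RLD\zeta)w_{j}$ is immediate from \eqref{eq:RLderright}.
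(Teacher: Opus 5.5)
Your argument is the paper's proof step for step: subtraction, projection onto $w_{j}$, Lemma~\ref{lem:ftfcscalar}, then Lemma~\ref{lem:scalar}(ii). The step you single out as delicate is, however, a genuine gap. It is the same gap that sits inside the proof of Lemma~\ref{lem:ftfcscalar}, whose claim that testing with $\zeta$ supported near $0$ gives $\bigl(\IL{1-\alpha}b\bigr)(0)=0$ is not correct.

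By \eqref{eq:RLDrep}, $\RLD\zeta=-\IR{1-\alpha}\zeta'$. So identity \eqref{eq:scalarweak} for $\zeta\in C^{\infty}_{0}(0,T)$ says exactly that $\bigl(\IL{1-\alpha}b_{j}\bigr)'=-\lambda_{j}b_{j}$ in $\mathcal{D}'(0,T)$. This fixes $\IL{1-\alpha}b_{j}$ only up to the constant $c_{j}:=\bigl(\IL{1-\alpha}b_{j}\bigr)(0)$. No test function vanishing near $t=0$ sees $c_{j}$, and neither does the datum term. That term has already cancelled in the subtraction, and it is in any case the bulk integral $\Gamma(1-\alpha)^{-1}\int_{0}^{T}t^{-\alpha}\langle g,\phi(t)\rangle\,dt$, a source rather than a condition at $t=0$. (Already $b=t^{\alpha-1}$, $\chi=0$ satisfies \eqref{eq:scalarweak} for $\alpha>\frac12$, since $\IL{1-\alpha}t^{\alpha-1}=\Gamma(\alpha)$ is constant.) What the identity really yields is
\[
b_{j}+\lambda_{j}\bigl(\IL{\alpha}b_{j}\bigr)(t)=\frac{c_{j}\,t^{\alpha-1}}{\Gamma(\alpha)}\qquad\text{a.e. }t\in(0,T).
\]

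For $\alpha\in(0,\frac12]$ this is harmless. The left-hand side lies in $L^{2}(0,T)$ and $t^{\alpha-1}$ does not, so $c_{j}=0$ and your conclusion follows. But it is the square integrability of $b_{j}$ that kills $c_{j}$, not the prescribed initial value, and this has to be said.

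For $\alpha\in(\frac12,1)$ no argument can close the step, because the statement fails for Definition~\ref{def:weak} as written. Put $b(t):=t^{\alpha-1}E_{\alpha,\alpha}(-\lambda_{1}t^{\alpha})$. Then:
\begin{itemize}
\item $b\in L^{2}(0,T)$ by \eqref{eq:MLbound};
\item applying $\IL{1-\alpha}$ to the series term by term gives $\IL{1-\alpha}b=E_{\alpha}(-\lambda_{1}t^{\alpha})$, which equals $1$ at $t=0$;
\item \eqref{eq:MLder} gives $\bigl(\IL{1-\alpha}b\bigr)'=-\lambda_{1}b$.
\end{itemize}
Hence $u:=b\,w_{1}\in\LHone$ satisfies \eqref{eq:spacedef} with the pair $(-\lambda_{1}u,0)$, so $u\in W^{\alpha}_{2,2}(0)$. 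It also satisfies \eqref{eq:weak} with $f=0$ and $g=0$, although $u\neq0$, and adding it to any weak solution produces another one. Uniqueness in the class of Definition~\ref{def:weak} therefore holds exactly for $\alpha\le\frac12$.

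To obtain it for $\alpha>\frac12$, the notion of solution must be strengthened so as to force $c_{j}=0$. Two ways of doing so:
\begin{itemize}
\item admit test functions that vanish near $T$ but not near $0$, so that the integration by parts produces the additional term $c_{j}\zeta(0)$;
\item require $u-g\in\IL{\alpha}\bigl(\LHmone\bigr)$.
\end{itemize}
The Galerkin limit, whose modes are given by \eqref{eq:modesofu}, satisfies either condition.
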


\begin{proof}
Let $U_{1},U_{2}$ be weak solutions with the same data and $W:=U_{1}-U_{2}$. Subtracting the two
identities \eqref{eq:weak}, which are linear in $u$, every data term cancels and
\begin{equation}\label{eq:diffweak}
\int_{0}^{T}\bigl\langle W,\RLD\phi\bigr\rangle\,dt+\int_{0}^{T}a(W,\phi)\,dt=0
\end{equation}
for every test function. We fix $j$ and take $\phi(t)=\zeta(t)w_{j}$ with
$\zeta\in C^{\infty}_{0}(0,T)$ arbitrary, which is admissible since $w_{j}\in\HH$, and we write
$b_{j}:=(W(\cdot),w_{j})_{\LL}\in L^{2}(0,T)$. Since $\RLD(\zeta w_{j})=(\RLD\zeta)w_{j}$ and,
by \eqref{eq:eigen} and the symmetry of $a$, $a(W,w_{j})=\lambda_{j}b_{j}$, identity
\eqref{eq:diffweak} becomes
\[
\int_{0}^{T}b_{j}(t)\,\bigl(\RLD\zeta\bigr)(t)\,dt
=\int_{0}^{T}\bigl(-\lambda_{j}b_{j}(t)\bigr)\,\zeta(t)\,dt ,
\]
which is \eqref{eq:scalarweak} with $\chi=-\lambda_{j}b_{j}\in L^{2}(0,T)$. Lemma~\ref{lem:ftfcscalar} gives $b_{j}+\lambda_{j}\IL{\alpha}b_{j}=0$, which is
\eqref{eq:volterra} with $F=0$ and $\eta_{0}=0$, so $b_{j}=0$ a.e. by
Lemma~\ref{lem:scalar}(ii). This holds for every $j$, and $\{w_{j}\}$ is an orthonormal basis of
$\LL$, so $W(t)=0$ for a.e. $t$.

The argument uses the weak formulation alone and never the pair of
Definition~\ref{def:space}; the ambiguity of that pair for $\alpha<\frac12$, recorded in
Remark~\ref{rem:pairunique}, therefore does not affect the conclusion.
\end{proof}

\begin{corollary}[Linearity of the solution operator]\label{cor:linear}
Assume the hypotheses of Theorem~\ref{thm:A} or of Theorem~\ref{thm:B}. Then
\begin{enumerate}[label=\textup{(\roman*)},leftmargin=2.4em]
\item the map sending the pair of data $(g,f)$ to the weak solution of \eqref{eq:P} is well
defined and linear;
\item if an a priori bound $\norm{u}_{X}\le\Phi(g,f)$ holds for the constructed solution and for
every admissible pair of data, then the same bound holds for the weak solution;
\item if, in addition, $u_{1}$ and $u_{2}$ are the weak solutions for the data $(g_{1},f_{1})$
and $(g_{2},f_{2})$, then $\norm{u_{1}-u_{2}}_{X}\le\Phi(g_{1}-g_{2},\,f_{1}-f_{2})$.
\end{enumerate}
\end{corollary}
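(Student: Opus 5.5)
The three parts rest on two facts: under the hypotheses of Theorem~\ref{thm:A} or of Theorem~\ref{thm:B} a weak solution exists, and by Theorem~\ref{thm:uniqueness} there is at most one. For (i), existence comes from the corresponding theorem and uniqueness from Theorem~\ref{thm:uniqueness}, since both theorems assume at least \Hthree{} by Proposition~\ref{prop:hierarchy}(i). So the map $S:(g,f)\mapsto u$ is well defined. For linearity, take data $(g_{1},f_{1})$, $(g_{2},f_{2})$ and scalars $\mu_{1},\mu_{2}$, and set $u:=\mu_{1}S(g_{1},f_{1})+\mu_{2}S(g_{2},f_{2})$.

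Next I check that $u$ is a weak solution for the data $(\mu_{1}g_{1}+\mu_{2}g_{2},\mu_{1}f_{1}+\mu_{2}f_{2})$. The identity \eqref{eq:weak} is linear in the triple $(u,f,g)$, because every term is bilinear in one of these and the test function $\phi$. Membership also passes to the combination. The pairs $(\chi_{i},g_{i})$ representing $\partial^{\alpha}_{t}u_{i}$ combine linearly into a pair satisfying \eqref{eq:spacedef}, since \eqref{eq:spacedef} is itself linear. Hence $u\in W^{\alpha}_{2,2}$ of the combined datum. One point needs care: the admissible class of data must be a vector space. It is. Under \Hthreep{} the combination lies in $L^{r}$ for $r=\min(r_{1},r_{2})>2/\alpha$, by inclusion of Lebesgue spaces on a bounded interval, and \Hthree{} and \Hfour{} are plainly linear. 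Uniqueness then gives $u=S(\mu_{1}g_{1}+\mu_{2}g_{2},\mu_{1}f_{1}+\mu_{2}f_{2})$.

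For (ii), the constructed solution is a weak solution, so by uniqueness it is \emph{the} weak solution. The bound therefore transfers verbatim. The only thing to note is that the constructed solution, a subsequential Galerkin limit, is independent of the subsequence, again by uniqueness. For (iii), linearity from (i) gives $u_{1}-u_{2}=S(g_{1}-g_{2},f_{1}-f_{2})$. Applying (ii) to the admissible data $(g_{1}-g_{2},f_{1}-f_{2})$ yields the estimate.

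I expect the main obstacle to be the bookkeeping in (i), namely that the pair structure of Definition~\ref{def:space} and the exponent in \Hthreep{} are preserved under linear combinations. Both reduce to linearity of \eqref{eq:spacedef} and to Proposition~\ref{prop:hierarchy}-type inclusions of $L^{r}$ spaces.
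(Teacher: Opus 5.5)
Your proposal is correct and follows the paper's proof: well-definedness comes from existence plus Theorem~\ref{thm:uniqueness}, linearity from the linearity of \eqref{eq:weak} in $(u;g,f)$ followed by uniqueness, and (ii)--(iii) from the fact that the constructed solution is the weak solution. Two of your checks make explicit what the paper's one-line argument leaves tacit: that the representing pairs combine linearly, so the combination lies in $W^{\alpha}_{2,2}$ of the combined datum, and that the data admissible under \Hthreep{} form a vector space via $r=\min(r_{1},r_{2})$.
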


\begin{proof}
Well defined by existence and Theorem~\ref{thm:uniqueness}. If $u^{(1)},u^{(2)}$ are the
solutions for two pairs of data and $c_{1},c_{2}\in\R$, then $c_{1}u^{(1)}+c_{2}u^{(2)}$
satisfies \eqref{eq:weak} for the combined data, those identities being linear in
$(u;\text{data})$; by uniqueness it is the solution for that pair. The last assertion is
immediate, the constructed solution being the weak solution.
\end{proof}

\subsection{Modes of the approximations and the spectral bound}\label{sec:spectral}

The system \eqref{eq:modes} is decoupled, so each approximation can be written out mode by mode
with explicit kernels. Nothing here uses the existence of a limit, and the bound this yields is
what Sections~\ref{sec:caseA}, \ref{sec:caseB} and~\ref{sec:sharp} draw on.

\begin{lemma}[Modes of the approximations]\label{lem:modesofu}
Let $\alpha\in(0,1)$, assume \Hone, \Htwo{} and \Hfour, and let
$f\in L^{1}\bigl((0,T);\LL\bigr)$. Then
\begin{enumerate}[label=\textup{(\roman*)},leftmargin=2.4em]
\item the Galerkin approximations \eqref{eq:un} are the partial sums
$u_{n}(t)=\sum_{j\le n}c_{j}(t)w_{j}$ of one fixed expansion, with
\begin{equation}\label{eq:modesofu}
\begin{gathered}
c_{j}(t)=(g,w_{j})_{\LL}E_{\alpha,1}\bigl(-\lambda_{j}t^{\alpha}\bigr)+(k_{j}*F_{j})(t),\\
k_{j}(\sigma):=\sigma^{\alpha-1}E_{\alpha,\alpha}\bigl(-\lambda_{j}\sigma^{\alpha}\bigr),
\qquad F_{j}:=(f(\cdot),w_{j})_{\LL} ;
\end{gathered}
\end{equation}
\item for every $j\ge1$ and every $\sigma\in(0,T]$,
\begin{equation}\label{eq:kernelbounds}
\theta_{j}\,\sigma^{\alpha-1}\le k_{j}(\sigma)\le\frac{\sigma^{\alpha-1}}{\Gamma(\alpha)},
\qquad \theta_{j}:=E_{\alpha,\alpha}\bigl(-\lambda_{j}T^{\alpha}\bigr)>0 .
\end{equation}
\end{enumerate}
\end{lemma}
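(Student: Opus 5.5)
Part (i) needs almost no new work. By the remark after \eqref{eq:gnconv}, neither \eqref{eq:modes} nor \eqref{eq:gn} involves $n$: the $j$-th equation carries $\lambda_{j}$, the source coefficient $F_{j}=(f(\cdot),w_{j})_{\LL}$ and the initial value $(g,w_{j})_{\LL}$, none of which depends on $n$. So $c_{j,n}=:c_{j}$ for every $n\ge j$, and $u_{n}$ is the $n$-th partial sum of one fixed series. For the formula, we apply Lemma~\ref{lem:scalar}(i) with $\lambda=\lambda_{j}$, $\eta_{0}=(g,w_{j})_{\LL}$ and $F=F_{j}$. This gives \eqref{eq:modesofu}, and $K$ becomes $k_{j}$. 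By Lemma~\ref{lem:scalar}(ii) it is the only $L^{1}$ solution of \eqref{eq:volterra}, which is how the approximations were defined.

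The only point to check is the integrability of $F_{j}$. We need $F_{j}\in L^{1}(0,T)$ for $f\in L^{1}\bigl((0,T);\LL\bigr)$, and this follows from $\abs{F_{j}(t)}\le\norm{f(t)}_{\LL}$ by Cauchy--Schwarz. We must also make sure the Galerkin construction is read through the Volterra form \eqref{eq:volterra}, which makes sense for $F\in L^{1}$. The continuity assertion in Lemma~\ref{lem:scalar}(i) is not used here, so nothing beyond $L^{1}$ is needed. The convolution $k_{j}*F_{j}$ is then well defined a.e. because $k_{j}\in L^{1}(0,T)$.

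For part (ii), write $k_{j}(\sigma)=\sigma^{\alpha-1}E_{\alpha,\alpha}(-\lambda_{j}\sigma^{\alpha})$. Lemma~\ref{lem:ML}(ii) with $\gamma=\delta=\alpha\in(0,1)$ is admissible since $\delta\ge\gamma$. It gives that $x\mapsto E_{\alpha,\alpha}(-x)$ is completely monotone on $[0,\infty)$, and in particular positive and non-increasing (its derivative is $\le0$). The upper bound is \eqref{eq:MLbound}: $E_{\alpha,\alpha}(-\lambda_{j}\sigma^{\alpha})\le1/\Gamma(\alpha)$. For the lower bound, $\sigma\le T$ and $\lambda_{j}>0$ give $\lambda_{j}\sigma^{\alpha}\le\lambda_{j}T^{\alpha}$. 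Monotonicity then yields $E_{\alpha,\alpha}(-\lambda_{j}\sigma^{\alpha})\ge E_{\alpha,\alpha}(-\lambda_{j}T^{\alpha})=\theta_{j}$, and $\theta_{j}>0$ by the strict positivity in \eqref{eq:MLbound}. Multiplying both inequalities by $\sigma^{\alpha-1}>0$ gives \eqref{eq:kernelbounds}.

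No step is a real obstacle. The one subtle point is extracting monotonicity from complete monotonicity rather than only the bound \eqref{eq:MLbound}. This is standard, since complete monotonicity means $(-1)^{m}\frac{d^{m}}{dx^{m}}E_{\alpha,\alpha}(-x)\ge0$ for all $m$, and the case $m=1$ is exactly non-increase. Strict positivity of $\theta_{j}$ is part of \eqref{eq:MLbound} as stated.
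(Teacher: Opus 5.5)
Your proposal is correct and follows the paper's proof essentially step for step. The modes do not depend on $n$, so Lemma~\ref{lem:scalar}(i)--(ii) gives the formula, and \eqref{eq:MLbound} together with the monotonicity of $x\mapsto E_{\alpha,\alpha}(-x)$ gives the two-sided kernel bound. Your extra checks — that $F_{j}\in L^{1}(0,T)$ by Cauchy--Schwarz, and that monotonicity is the case $m=1$ of complete monotonicity — are details the paper leaves implicit.
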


\begin{proof}
The system \eqref{eq:modes} is decoupled and neither it nor the datum \eqref{eq:gn} involves $n$,
so $c_{j,n}=c_{j}$ for every $n\ge j$, and Lemma~\ref{lem:scalar}(i)--(ii) identifies $c_{j}$ as
\eqref{eq:modesofu}. The two-sided bound \eqref{eq:kernelbounds} is \eqref{eq:MLbound} together
with the fact that $x\mapsto E_{\alpha,\alpha}(-x)$ is non-increasing, so that its value at
$\lambda_{j}\sigma^{\alpha}$ lies between its value at $\lambda_{j}T^{\alpha}$ and
$1/\Gamma(\alpha)$.
\end{proof}

\begin{proposition}[Spectral bound on the approximations]\label{prop:spectralbound}
Let $\alpha\in(0,1)$, assume \Hone, \Htwo{} and \Hfour, and let $q>1/\alpha$ be an exponent with
$f\in\LLq$. Then
\begin{enumerate}[label=\textup{(\roman*)},leftmargin=2.4em]
\item for every $n\in\mathbb{N}$,
\begin{equation}\label{eq:spectralbound}
\sup_{t\in[0,T]}\norm{u_{n}(t)}_{\LL}\le\norm{g}_{\LL}+\kappa_{\alpha}(q,T)\norm{f}_{\LLq},
\end{equation}
with $\kappa_{\alpha}(q,T)$ the constant of \eqref{eq:mapping};
\item if a subsequence of $\{u_{n}\}$ converges weakly in $\LHone$ to a weak solution $u$ of
\eqref{eq:P}, then $u\in\LinfL$ and \eqref{eq:spectralbound} holds with $u$ in place of $u_{n}$.
\end{enumerate}
\end{proposition}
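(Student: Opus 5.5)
For part (i) we work directly with the mode representation \eqref{eq:modesofu} of Lemma~\ref{lem:modesofu}, which holds for $f\in\LLq\subset L^{1}\bigl((0,T);\LL\bigr)$ and needs no absolute continuity. We fix $t\in[0,T]$ and $n$, and we split $u_{n}(t)=\sum_{j\le n}(g,w_{j})_{\LL}E_{\alpha,1}(-\lambda_{j}t^{\alpha})w_{j}+\sum_{j\le n}(k_{j}*F_{j})(t)w_{j}$. For the datum part, orthonormality and \eqref{eq:MLbound} with $\gamma=\delta=1$ give $0<E_{\alpha,1}(-\lambda_{j}t^{\alpha})\le1$, so its $\LL$ norm is at most $\norm{g_{n}}_{\LL}\le\norm{g}_{\LL}$ by \eqref{eq:gnconv}.

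For the source part, the natural route is to avoid estimating mode by mode, since that would lose the square-summability. Instead we use the pointwise kernel bound $0<k_{j}(\sigma)\le\sigma^{\alpha-1}/\Gamma(\alpha)$ of \eqref{eq:kernelbounds} inside a duality argument. We take $v\in\LL$ with $\norm{v}_{\LL}\le1$ and write
\[
\Bigl(\sum_{j\le n}(k_{j}*F_{j})(t)w_{j},v\Bigr)_{\LL}
=\int_{0}^{t}\Bigl(f(s),\sum_{j\le n}k_{j}(t-s)(v,w_{j})_{\LL}w_{j}\Bigr)_{\LL}ds .
\]
Here the operator $\sum_{j\le n}k_{j}(t-s)(\cdot,w_{j})w_{j}$ is diagonal with entries in $\bigl[0,(t-s)^{\alpha-1}/\Gamma(\alpha)\bigr]$. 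Its operator norm on $\LL$ is therefore at most $(t-s)^{\alpha-1}/\Gamma(\alpha)$, and this is where the symmetry of $a$, through the orthonormal eigenbasis, is decisive. Cauchy--Schwarz then bounds the pairing by $\bigl(\IL{\alpha}\norm{f}_{\LL}\bigr)(t)$.

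Now H\"older's inequality, exactly as in Proposition~\ref{prop:threshold}(i) but applied to $h=\norm{f(\cdot)}_{\LL}\in L^{q}(0,T)$ rather than to its square, gives $\bigl(\IL{\alpha}\norm{f}_{\LL}\bigr)(t)\le\kappa_{\alpha}(q,T)\norm{f}_{\LLq}$ uniformly in $t$, because $q>1/\alpha$. Taking the supremum over $v$ and $t$ yields \eqref{eq:spectralbound}. The point to stress is that the exponent is applied to $\norm{f}$, not $\norm{f}^{2}$, which is why $1/\alpha$ rather than $2/\alpha$ suffices. Continuity in $t$ (so that the supremum is a maximum and not an essential supremum) follows from Lemma~\ref{lem:scalar}(i), since $q>1/\alpha$.

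For part (ii), the bound \eqref{eq:spectralbound} says that $\{u_{n}\}$ lies in the closed ball of radius $R:=\norm{g}_{\LL}+\kappa_{\alpha}(q,T)\norm{f}_{\LLq}$ in $\LinfL$. This ball is convex and closed in $\LLtwo$, hence weakly closed there. Weak convergence in $\LHone$ implies weak convergence in $\LLtwo$, because the embedding is continuous. The limit $u$ therefore stays in the ball, i.e.\ $\norm{u(t)}_{\LL}\le R$ for a.e.\ $t$. One can verify the closedness concretely: testing against $\mathbf 1_{E}(t)v$ for measurable $E$ and $v$ in a countable dense subset of the unit ball gives $\int_{E}(u,v)\le R\abs{E}$, and hence the a.e.\ bound.

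The main obstacle is the operator-norm step for the source part, namely justifying the interchange of the finite sum with the convolution integral and the diagonal bound. Both are harmless because the sum is finite and each $F_{j}$ is integrable, but this step is the one that must be written carefully.
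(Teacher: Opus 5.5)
Your proof is correct and is essentially the paper's: the mode representation, the bound $0<E_{\alpha,1}(-x)\le1$ for the datum part (which comes from \eqref{eq:MLbound} with $\gamma=\alpha$, $\delta=1$, not $\gamma=\delta=1$ as you wrote), the $j$-uniform kernel bound \eqref{eq:kernelbounds} reducing the source part to $\bigl(\IL{\alpha}\norm{f}_{\LL}\bigr)(t)$, and H\"older with $q>1/\alpha$; your duality step with the diagonal operator is just the dual form of the paper's integral Minkowski inequality in $\ell^{2}$ plus Bessel. For (ii) you use weak closedness (Mazur) of the $\LinfL$-ball inside $\LLtwo$, whereas the paper extracts a further weak-$*$ convergent subsequence in $\LinfL=\bigl(L^{1}((0,T);\LL)\bigr)^{*}$ and invokes lower semicontinuity; both are valid, and yours avoids the extra extraction and the identification of the weak-$*$ limit.
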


\begin{proof}
(i) By Lemma~\ref{lem:modesofu}(i) and the triangle inequality in $\ell^{2}$,
\[
\norm{u_{n}(t)}_{\LL}\le\Bigl(\sum_{j\le n}\abs{(g,w_{j})_{\LL}}^{2}
E_{\alpha,1}\bigl(-\lambda_{j}t^{\alpha}\bigr)^{2}\Bigr)^{1/2}
+\Bigl(\sum_{j\le n}\abs{(k_{j}*F_{j})(t)}^{2}\Bigr)^{1/2}.
\]
The first sum is at most $\norm{g}_{\LL}$, because $0<E_{\alpha,1}(-x)\le1$ by
\eqref{eq:MLbound} and $\{w_{j}\}$ is orthonormal. For the second, the upper bound in
\eqref{eq:kernelbounds} is uniform in $j$, so the integral Minkowski inequality in $\ell^{2}$ and
Bessel's inequality give
\[
\Bigl(\sum_{j\le n}\abs{(k_{j}*F_{j})(t)}^{2}\Bigr)^{1/2}
\le\int_{0}^{t}\frac{(t-s)^{\alpha-1}}{\Gamma(\alpha)}
\Bigl(\sum_{j\le n}\abs{F_{j}(s)}^{2}\Bigr)^{1/2}ds
\le\bigl(\IL{\alpha}\norm{f(\cdot)}_{\LL}\bigr)(t),
\]
and Proposition~\ref{prop:threshold}(i) with $p=q$ bounds the right-hand side by
$\kappa_{\alpha}(q,T)\norm{f}_{\LLq}$, uniformly in $t$ and in $n$.

(ii) Bound (i) is uniform in $n$, and $\LinfL$ is the dual of $L^{1}\bigl((0,T);\LL\bigr)$, so
bounded sets there are weak-$*$ sequentially compact: a further subsequence converges weakly-$*$
in $\LinfL$, and its limit is $u$ by uniqueness of limits in
$\mathcal{D}'\bigl((0,T);\LL\bigr)$. Lower semicontinuity of the norm under weak-$*$ convergence
carries the bound to $u$.
\end{proof}

Proposition~\ref{prop:threshold} showed the exponent $r>2/\alpha$ to be what the energy argument
consumes. Bound \eqref{eq:spectralbound} needs only $q>1/\alpha$, half of that exponent, and the
gap between the two is the cost of the energy route. One consequence is worth stating at once,
because it is what keeps the comparison of Section~\ref{sec:comparison} honest: with $q=2$,
admissible exactly when $\alpha>\frac12$, the bound holds under \Hthree{} alone. The uniform
estimate in $\LinfL$ is therefore not what \Hthreep{} buys; what \Hthreep{} buys is the range
$\alpha\le\frac12$, where by Proposition~\ref{prop:sharp} the estimate is false under \Hthree.

\section{The case \texorpdfstring{\Hthree}{(H3)}: the weaker hypothesis}\label{sec:caseA}

\subsection{Statement}

\begin{theorem}[Existence under \Hthree]\label{thm:A}
Let $\alpha\in\bigl(\tfrac12,1\bigr)$ and assume \Hone, \Htwo, \Hthree{} and \Hfour. Then
\begin{enumerate}[label=\textup{(\roman*)},leftmargin=2.4em]
\item problem \eqref{eq:P} has a weak solution $u\in\WscaleG$;
\item the Galerkin approximations \eqref{eq:un} converge to $u$, the whole sequence, strongly in
$L^{p}\bigl((0,T);\LL\bigr)$ for every $p\in[1,2\alpha)$, weakly in $\LHone$ and weakly-$*$ in
$\LinfL$;
\item the solution satisfies $u\in\LinfL$, with
\begin{equation}\label{eq:LinfA}
\norm{u}_{\LinfL}\le\norm{g}_{\LL}+\kappa_{\alpha}(2,T)\norm{f}_{\LLtwo} .
\end{equation}
\end{enumerate}
\end{theorem}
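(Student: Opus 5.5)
The plan is to feed Proposition~\ref{prop:package} with two uniform bounds, and then upgrade the conclusion using uniqueness and the spectral bound. \emph{Step 1: the bound (a) in $\LHone$.} I start from \eqref{eq:energyL2}, valid for every $f\in\LLtwo$ by Lemma~\ref{lem:dens}(iii). There is no pointwise control of $\bigl(\IL{\alpha}\norm{f}^{2}_{\LL}\bigr)(t)$ (Proposition~\ref{prop:nobound}), so I integrate over $(0,\tau)$ instead. Lemma~\ref{lem:timeint}(i) turns each fractional integral into $\IL{\alpha}$ of a primitive. With $Y(\tau):=\int_{0}^{\tau}\norm{u_{n}}^{2}_{\LL}$ this gives
\[
Y(\tau)\le\Bigl(T\norm{g}^{2}_{\LL}+\tfrac{T^{\alpha}}{\varepsilon\Gamma(\alpha+1)}\norm{f}^{2}_{\LLtwo}\Bigr)+\varepsilon\bigl(\IL{\alpha}Y\bigr)(\tau),
\]
whose first bracket is a constant, hence non-decreasing. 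Lemma~\ref{lem:gronwall} then bounds $Y(T)$ uniformly in $n$. Next I integrate \eqref{eq:energyH1} over $(0,T)$. Lemma~\ref{lem:timeint}(ii), applied on the left, gives the lower bound $\int_{0}^{T}\IL{\alpha}h\ge\Gamma(\alpha+1)^{-1}\int_{0}^{T}(T-s)^{\alpha}h(s)\,ds$, which does not control $\int_{0}^{T}h$ near $s=T$. To avoid this I integrate up to $2T$ instead, extending $f$ by zero and solving on $(0,2T)$; by Lemma~\ref{lem:RLprops}(iii) the restriction to $(0,T)$ is unchanged, and the weight $(2T-s)^{\alpha}\ge T^{\alpha}$ on $(0,T)$. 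The result is $M_{2}$.

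\emph{Step 2: hypothesis (b), the window $p\in[1,2\alpha)$.} This is the main obstacle. I need $\sup_{t}\bigl(\IL{\alpha}\norm{u_{n}}^{p}_{\HH}\bigr)(t)$ bounded, knowing only $\norm{u_{n}}^{2}_{\HH}\in L^{1}$ uniformly. I would apply Proposition~\ref{prop:threshold}(i) to $h=\norm{u_{n}}^{p}_{\HH}\in L^{2/p}$ with $\norm{h}_{L^{2/p}}=\norm{u_{n}}^{p}_{\LHone}\le M_{2}^{p/2}$. This requires $2/p>1/\alpha$, i.e.\ $p<2\alpha$, which is exactly the window, and it is nonempty with $p\ge1$ precisely because $\alpha>\frac12$. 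With $p=1$, Proposition~\ref{prop:package} yields a subsequence and a weak solution $u\in\WscaleG$, proving (i). For every $p\in[1,2\alpha)$, Lemma~\ref{lem:liliu} gives relative compactness in $L^{p}((0,T);\LL)$.

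\emph{Step 3: whole-sequence convergence and (iii).} Every subsequence of $\{u_{n}\}$ has, by the argument above, a further subsequence converging strongly in each $L^{p}$ with $p<2\alpha$, weakly in $\LHone$, and to a weak solution. By Theorem~\ref{thm:uniqueness} that limit is always the same $u$, so the whole sequence converges in these topologies. Since $q=2>1/\alpha$, Proposition~\ref{prop:spectralbound} bounds $u_{n}$ uniformly in $\LinfL$ with the constant of \eqref{eq:LinfA}. Weak-$*$ compactness, combined with the same subsequence argument, gives weak-$*$ convergence of the whole sequence, and part (ii) of that proposition transfers \eqref{eq:LinfA} to $u$.
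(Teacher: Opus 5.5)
Your proposal is correct and follows the paper's proof essentially step for step. The shared steps are: the Gr\"onwall bound on the primitive obtained after integrating in time, and the $2T$ detour of Proposition~\ref{prop:boundsA}; the window $p<2\alpha$ from the mapping property applied to $\norm{u_{n}}^{p}_{\HH}\in L^{2/p}$, which is exactly the H\"older computation of Proposition~\ref{prop:compactA}(i); the subsequence-plus-uniqueness argument for the whole sequence; and Proposition~\ref{prop:spectralbound} with $q=2$ for (iii). Two harmless slips: the weighted expression on the left of the integrated \eqref{eq:energyH1} is the identity in Lemma~\ref{lem:timeint}(i), not a consequence of part (ii), which is an upper bound; and the whole-sequence argument in $L^{p}$ is cleanest run for each fixed $p\in[1,2\alpha)$ separately, as the paper does, rather than asking one further subsequence to converge in all these spaces at once.
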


The solution is unique by Theorem~\ref{thm:uniqueness}.

Two numbers occur in (ii). Both come from Lemma~\ref{lem:liliu}. We must verify the
first hypothesis of that lemma with the only bound available in the present case, namely the
energy bound \eqref{eq:M2A}, and we can do so for some exponents and not for others. The
admissible set is exactly $[1,2\alpha)$,
which accounts for the range of $p$, and it is non-empty precisely when $\alpha>\frac12$, which
accounts for the restriction on $\alpha$. The restriction $\alpha>\frac12$ thus records the
non-emptiness of an interval. Section~\ref{sec:consequencesA} takes up what it restricts, and
Section~\ref{sec:sharp} settles it.

Conclusion (iii) is Proposition~\ref{prop:spectralbound} with $q=2$, admissible exactly because
$\alpha>\frac12$. It is worth separating at once what gives it from what does not. The energy
route of this section yields no bound in $\LinfL$, for the reason
Proposition~\ref{prop:nobound} makes precise, and the mode representation of
Section~\ref{sec:spectral} supplies one notwithstanding. Hypothesis \Hthree{} is therefore not
what costs the uniform estimate; the order is, and only below $\frac12$, as
Proposition~\ref{prop:sharp} shows.

The a priori estimates of Section~\ref{sec:boundsA} hold for every $\alpha\in(0,1)$. The
restriction $\alpha>\frac12$ enters only in Section~\ref{sec:windowA}, through the compactness
step.

\subsection{A priori estimates and uniform bounds}\label{sec:boundsA}

By Proposition~\ref{prop:nobound} the term $\bigl(\IL{\alpha}\norm{f}^{2}_{\LL}\bigr)(t)$ in
\eqref{eq:energyL2} admits no bound uniform in $t$ under \Hthree. A pointwise estimate in time is
therefore out of reach along this route. What survives is Lemma~\ref{lem:timeint}, which controls the integral in $t$ of a fractional
integral: we integrate in time first and apply Gr\"onwall afterwards. The unknown changes on the
way. The inequality ceases to be about $\norm{u_{n}(t)}^{2}_{\LL}$ and becomes about its
primitive, and with that goes any bound in $\LinfL$ along this route. What the route yields
instead is the energy scale.

\begin{proposition}[Uniform bounds under \Hthree]\label{prop:boundsA}
Let $\alpha\in(0,1)$ and assume \Hone, \Htwo, \Hthree{} and \Hfour. Then there are constants
$N_{1},M_{2}>0$, independent of $n$, such that
\begin{equation}\label{eq:N1}
\int_{0}^{T}\norm{u_{n}(t)}^{2}_{\LL}dt\le N_{1}:=D(T)\,E_{\alpha}\bigl(T^{\alpha}\bigr),
\qquad
D(\tau):=\tau\norm{g}^{2}_{\LL}+\frac{\tau^{\alpha}\,
\norm{f}^{2}_{\LLtwo}}{\Gamma(\alpha+1)},
\end{equation}
\begin{equation}\label{eq:M2A}
\int_{0}^{T}\norm{u_{n}(t)}^{2}_{\HH}dt\le M_{2}
\end{equation}
hold for every $n\in\mathbb{N}$.
\end{proposition}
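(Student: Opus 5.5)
We integrate the energy inequalities \eqref{eq:energyL2} and \eqref{eq:energyH1} over $(0,\tau)$ and only afterwards apply a fractional Gr\"onwall argument, with the primitive of $\norm{u_{n}}^{2}_{\LL}$ as the new unknown. By Lemma~\ref{lem:dens}(iii) both inequalities hold for a.e.\ $t$ and every $f\in\LLtwo$, so no density step is needed here. We fix $\varepsilon=1$, which produces exactly the constant $E_{\alpha}(T^{\alpha})$ in \eqref{eq:N1}.

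\emph{Step 1: the $\LL$ bound.} Set $y(\tau):=\int_{0}^{\tau}\norm{u_{n}(t)}^{2}_{\LL}dt$, which is non-decreasing and continuous. Integrate \eqref{eq:energyL2} (with $\varepsilon=1$) over $(0,\tau)$. We bound the datum term by $\tau\norm{g_{n}}^{2}_{\LL}\le\tau\norm{g}^{2}_{\LL}$, using \eqref{eq:gnconv}. The source term is $\int_{0}^{\tau}\IL{\alpha}\norm{f}^{2}_{\LL}$, and Lemma~\ref{lem:timeint}(ii) bounds it by $\tau^{\alpha}\norm{f}^{2}_{\LLtwo}/\Gamma(\alpha+1)$. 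For the last term, Lemma~\ref{lem:timeint}(i) with $h=\norm{u_{n}}^{2}_{\LL}$ gives the identity $\int_{0}^{\tau}\IL{\alpha}h=(\IL{\alpha}y)(\tau)$. Together these yield
\[
y(\tau)\le D(\tau)+\bigl(\IL{\alpha}y\bigr)(\tau),\qquad\tau\in[0,T].
\]
Here $D$ is non-decreasing and non-negative, so Lemma~\ref{lem:gronwall} with $\gamma=\alpha$ and $\varepsilon=1$ gives $y(\tau)\le D(\tau)E_{\alpha}(\tau^{\alpha})$. At $\tau=T$ this is \eqref{eq:N1}.

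\emph{Step 2: the $\HH$ bound.} Integrate \eqref{eq:energyH1} over $(0,T)$ in the same way. By Lemma~\ref{lem:timeint}(i) the left side becomes $2a_{0}\bigl(\IL{\alpha}Y\bigr)(T)$, where $Y(\sigma):=\int_{0}^{\sigma}\norm{u_{n}}^{2}_{\HH}$. Since $Y$ is non-decreasing and $(T-s)^{\alpha-1}\ge T^{\alpha-1}$, we get
\[
\bigl(\IL{\alpha}Y\bigr)(T)\ge\frac{1}{\Gamma(\alpha)}\int_{0}^{T}(T-s)^{\alpha-1}Y(s)\,ds .
\]
This quantity does not directly dominate $Y(T)$, so the step as written needs care. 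The cleaner route is to avoid integration altogether. By Lemma~\ref{lem:RLprops}(iv) with $\varphi\equiv1$,
\[
\int_{0}^{T}\IL{\alpha}h=\int_{0}^{T}h(s)\,\frac{(T-s)^{\alpha}}{\Gamma(\alpha+1)}\,ds,
\]
and this weight degenerates at $s=T$. To handle the degeneracy, evaluate \eqref{eq:energyH1} pointwise at $t=2T$ on the extended interval $(0,2T)$, where $(2T-s)^{\alpha-1}\ge(2T)^{\alpha-1}$ for $s\le T$. We extend $f$ by zero to $(T,2T)$. By Lemma~\ref{lem:RLprops}(iii) and the decoupled structure, $u_{n}$ on $(0,T)$ is unchanged by this extension, and Step 1 on $(0,2T)$ still controls $\int_{0}^{2T}\norm{u_{n}}^{2}_{\LL}$.

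\emph{Step 3: closing the $\HH$ bound.} It remains to bound the right side of \eqref{eq:energyH1} at $t=2T$ uniformly. The source term $\IL{\alpha}\norm{f}^{2}_{\LL}$ at $t=2T$ sees $f$ only on $(0,T)$, at distance at least $T$ from $2T$, so it is bounded by $T^{\alpha-1}\norm{f}^{2}_{\LLtwo}/\Gamma(\alpha)$. The term $\IL{\alpha}\norm{u_{n}}^{2}_{\LL}$ at $t=2T$ is not bounded in the same way, because $u_{n}$ is nonzero on $(T,2T)$ near the point $2T$. We therefore use \eqref{eq:energyH1} a.e.\ and average it over $t\in(3T/2,2T)$ instead of evaluating at a single point:
\[
\frac{2}{T}\int_{3T/2}^{2T}\bigl(\IL{\alpha}\norm{u_{n}}^{2}_{\LL}\bigr)(t)\,dt
\le\frac{2}{T}\int_{0}^{2T}\bigl(\IL{\alpha}\norm{u_{n}}^{2}_{\LL}\bigr)(t)\,dt .
\]
By Lemma~\ref{lem:timeint}(ii) and Step 1, the right side is bounded by a constant times $N_{1}(2T)$. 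On the left of \eqref{eq:energyH1}, for $t\in(3T/2,2T)$ and $s\le T$ the kernel satisfies $(t-s)^{\alpha-1}\ge(2T)^{\alpha-1}$. Hence the averaged left side dominates $2a_{0}(2T)^{\alpha-1}\int_{0}^{T}\norm{u_{n}}^{2}_{\HH}/\Gamma(\alpha)$, which yields $M_{2}$.

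We expect this last step, the extraction of a genuine $\LHone$ bound from \eqref{eq:energyH1}, to be the main obstacle. The difficulty is that the kernel of $\IL{\alpha}$ gives weight only near the evaluation point, so the bound must be taken at a point or average lying strictly beyond $T$. The extension of the horizon must be justified by the causality in Lemma~\ref{lem:RLprops}(iii).
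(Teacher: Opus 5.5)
Your argument is correct and follows the paper's proof. Step 1 is the same integrated Gr\"onwall argument on the primitive. Your $\HH$ bound uses the same device: extend $f$ by zero to $(0,2T)$, use causality to keep $u_{n}$ unchanged on $[0,T]$, and exploit a weight that is bounded below on $(0,T)$.

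The only difference is cosmetic. You average \eqref{eq:energyH1} over $(3T/2,2T)$ and bound the kernel $(t-s)^{\alpha-1}$ from below directly, whereas the paper integrates over all of $(0,2T)$ and divides by $(2T-s)^{\alpha}\ge T^{\alpha}$. In your averaged version the source term is bounded by $(T/2)^{\alpha-1}\norm{f}^{2}_{\LLtwo}/\Gamma(\alpha)$, not by $T^{\alpha-1}\norm{f}^{2}_{\LLtwo}/\Gamma(\alpha)$, which changes only the constant.
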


\begin{proof}
Throughout we take $\varepsilon=1$ in \eqref{eq:energyL2} and \eqref{eq:energyH1}, which suffices
here.

\emph{Step 1: integration in time.} We integrate \eqref{eq:energyL2} over $(0,\tau)$ with
$\tau\in(0,T]$ arbitrary, and write $Y_{n}(\tau):=\int_{0}^{\tau}\norm{u_{n}(t)}^{2}_{\LL}dt$ for
the primitive of $\norm{u_{n}(\cdot)}^{2}_{\LL}$, a continuous non-negative function. The
left-hand side is $Y_{n}(\tau)$ by definition. The constant term contributes
$\tau\norm{g_{n}}^{2}_{\LL}\le\tau\norm{g}^{2}_{\LL}$ by \eqref{eq:gnconv}. For the source term,
Lemma~\ref{lem:timeint} with $\gamma=\alpha$ and $h=\norm{f(\cdot)}^{2}_{\LL}\in L^{1}(0,T)$,
where we use \Hthree{} alone, gives
\[
\int_{0}^{\tau}\bigl(\IL{\alpha}\norm{f}^{2}_{\LL}\bigr)(t)\,dt
\le\frac{\tau^{\alpha}}{\Gamma(\alpha+1)}\norm{f}^{2}_{\LLtwo}.
\]
For the term in the unknown, the same lemma with $h=\norm{u_{n}(\cdot)}^{2}_{\LL}$, this time in
the form of its second equality, gives
$\int_{0}^{\tau}\bigl(\IL{\alpha}\norm{u_{n}}^{2}_{\LL}\bigr)(t)\,dt
=\bigl(\IL{\alpha}Y_{n}\bigr)(\tau)$, so that
the Gr\"onwall structure survives the change of unknown. Adding the four contributions, we
obtain
\begin{equation}\label{eq:gronwallY}
Y_{n}(\tau)\le D(\tau)+\bigl(\IL{\alpha}Y_{n}\bigr)(\tau),
\end{equation}
with $D$ as in \eqref{eq:N1}.

\emph{Step 2: Gr\"onwall.} Inequality \eqref{eq:gronwallY} is \eqref{eq:gronwall-hyp} with
$y=Y_{n}$, $\gamma=\alpha$, $\varepsilon=1$ and $a=D$, and $D$ is non-decreasing because both
$\tau$ and $\tau^{\alpha}$ are. Lemma~\ref{lem:gronwall} gives
$Y_{n}(\tau)\le D(\tau)E_{\alpha}(\tau^{\alpha})$, and $\tau=T$ is \eqref{eq:N1}.

\emph{Step 3: the bound in $\HH$, and why a detour is needed.} The natural attempt is to repeat
Steps 1 and 2 on the second inequality \eqref{eq:energyH1}. We take $\varepsilon=1$ there and
integrate over $(0,T)$. On the left, the first equality of \eqref{eq:timeint} with $\gamma=\alpha$
and $h=\norm{u_{n}(\cdot)}^{2}_{\HH}$ turns the integral of the fractional integral into a single
weighted integral,
\begin{equation}\label{eq:weightedT}
2a_{0}\int_{0}^{T}\bigl(\IL{\alpha}\norm{u_{n}}^{2}_{\HH}\bigr)(t)\,dt
=\frac{2a_{0}}{\Gamma(\alpha+1)}\int_{0}^{T}\norm{u_{n}(s)}^{2}_{\HH}\,(T-s)^{\alpha}ds ,
\end{equation}
while the right-hand side is bounded exactly as in Steps 1 and 2, by
$T\norm{g}^{2}_{\LL}+\Gamma(\alpha+1)^{-1}T^{\alpha}\bigl(\norm{f}^{2}_{\LLtwo}+N_{1}\bigr)$.

The weight $(T-s)^{\alpha}$ vanishes at $s=T$, so
\eqref{eq:weightedT} gives no information on the size of $\norm{u_{n}(s)}_{\HH}$ for $s$ near
$T$; a sequence concentrating there can satisfy it with $\int_{0}^{T}\norm{u_{n}}^{2}_{\HH}$
arbitrarily large. Nor can the weight be removed by manipulating \eqref{eq:weightedT}, since
Lemma~\ref{lem:timeint} produces it as an identity.

The weight vanishes at the right endpoint of the interval of integration, which is ours to
choose. If we integrate up to $2T$ instead of up to $T$, the degeneracy occurs at $s=2T$ and the
weight remains bounded below on $(0,T)$.

For this we need a problem posed on $[0,2T]$ whose restriction to $[0,T]$ is the one we started
with. We set
\[
\tilde f:=f\,\mathbf{1}_{(0,T)},
\]
which belongs to $\mathcal{D}$ over $(0,2T)$ because $\operatorname{supp}f$ is compact in
$(0,T)$, so that the extension by zero is still smooth, and let $\tilde u_{n}$ be the solution of
the Galerkin system \eqref{eq:modes} on $[0,2T]$ with source $\tilde f$ and the same initial
datum $g_{n}$. Two observations make this legitimate. First, the hypotheses \Hone, \Htwo{} and
\Hfour{} do not involve the horizon, and $\tilde f$ satisfies \Hthree{} on $(0,2T)$ with the
same norm,
\begin{equation}\label{eq:sametilde}
\norm{\tilde f}_{L^{2}((0,2T);\LL)}=\norm{f}_{\LLtwo},
\end{equation}
since the extension is by zero; so every estimate proved so far holds verbatim on $[0,2T]$ for
$\tilde u_{n}$, with $T$ replaced by $2T$ and with the \emph{same} norm of the source. Second,
\begin{equation}\label{eq:restriction}
\tilde u_{n}=u_{n}\quad\text{on }[0,T].
\end{equation}

Indeed, let us fix $t\in(0,T)$. By the causality recorded in Lemma~\ref{lem:RLprops}(iii) the
value $\bigl(\IL{\alpha}v\bigr)(t)$ depends only on $v|_{(0,t)}$, whichever interval $v$ is
defined on: the integration in \eqref{eq:RLleft} runs over $(0,t)$ and over nothing else. Hence
$\bigl(\IL{\alpha}\tilde u_{n}\bigr)(t)$ and $\bigl(\IL{\alpha}\tilde f\bigr)(t)$ are computed
from the restrictions of $\tilde u_{n}$ and $\tilde f$ to $(0,t)\subset(0,T)$, where
$\tilde f=f$. Therefore $\tilde u_{n}|_{[0,T]}$ satisfies, on $(0,T)$, the same Volterra
equation \eqref{eq:volterra} as $u_{n}$, with the same source and the same initial datum; the
uniqueness in Lemma~\ref{lem:scalar}(ii), applied mode by mode, gives \eqref{eq:restriction}. The
auxiliary problem agrees with the original on $[0,T]$.

We now run Steps 1 and 2 on $[0,2T]$. Integrating \eqref{eq:energyH1} with $\varepsilon=1$,
written for $\tilde u_{n}$ on $[0,2T]$, over $(0,2T)$, and using \eqref{eq:timeint} on the left
and \eqref{eq:sametilde} on the right, we obtain
\begin{equation}\label{eq:estimate2T}
\begin{gathered}
\frac{2a_{0}}{\Gamma(\alpha+1)}\int_{0}^{2T}\norm{\tilde u_{n}(s)}^{2}_{\HH}(2T-s)^{\alpha}ds
\ \le\ B,\\[4pt]
B:=2T\norm{g}^{2}_{\LL}+\frac{(2T)^{\alpha}}{\Gamma(\alpha+1)}
\Bigl(\norm{f}^{2}_{\LLtwo}+\widetilde N_{1}\Bigr),
\end{gathered}
\end{equation}
where $\widetilde N_{1}:=D(2T)E_{\alpha}\bigl((2T)^{\alpha}\bigr)$ is the constant \eqref{eq:N1}
computed on $[0,2T]$, with the same $\norm{f}_{\LLtwo}$, again by \eqref{eq:sametilde}. The
constant $B$ depends on $T$, $\alpha$, $a_{0}$ and the data, and not on $n$.

It remains to read \eqref{eq:estimate2T} back on $(0,T)$, in four steps:
\begin{align*}
\int_{0}^{T}\norm{u_{n}(s)}^{2}_{\HH}ds
&=\int_{0}^{T}\norm{\tilde u_{n}(s)}^{2}_{\HH}ds
&&\text{by \eqref{eq:restriction},}\\[2pt]
&\le T^{-\alpha}\!\int_{0}^{T}\norm{\tilde u_{n}(s)}^{2}_{\HH}(2T-s)^{\alpha}ds
&&\text{as }2T-s\ge T\text{ on }(0,T),\\[2pt]
&\le T^{-\alpha}\!\int_{0}^{2T}\norm{\tilde u_{n}(s)}^{2}_{\HH}(2T-s)^{\alpha}ds
&&\text{integrand }\ge0,\\[2pt]
&\le\frac{\Gamma(\alpha+1)}{2a_{0}T^{\alpha}}\,B
&&\text{by \eqref{eq:estimate2T}.}
\end{align*}
The second step is what the detour is for: on $(0,T)$ the weight $(2T-s)^{\alpha}$ is
bounded below by $T^{\alpha}>0$, so that we may divide by it, whereas we may not divide by
$(T-s)^{\alpha}$ in \eqref{eq:weightedT}. This proves \eqref{eq:M2A} with the explicit constant
\[
M_{2}:=\frac{\Gamma(\alpha+1)}{2a_{0}T^{\alpha}}
\left[2T\norm{g}^{2}_{\LL}+\frac{(2T)^{\alpha}}{\Gamma(\alpha+1)}
\Bigl(\norm{f}^{2}_{\LLtwo}+D(2T)E_{\alpha}\bigl((2T)^{\alpha}\bigr)\Bigr)\right],
\]
independent of $n$.
\end{proof}

\begin{remark}[What the detour costs]\label{rem:detour}
The auxiliary horizon is only a device: the extended source carries no energy beyond
$T$, and \eqref{eq:restriction} says that we have left the approximations on $[0,T]$ untouched.
What we pay for it is visible in $M_{2}$, which involves $\norm{f}_{\LLtwo}$ and the datum
through $D(2T)E_{\alpha}((2T)^{\alpha})$ rather than $D(T)E_{\alpha}(T^{\alpha})$: the constant
is the one the interval $[0,2T]$ would produce, and it degenerates like $T^{-\alpha}$ as
$T\downarrow0$. Any factor $\lambda>1$ in place of $2$ works equally well, with $T^{\alpha}$
replaced by $((\lambda-1)T)^{\alpha}$, so that there is a trade-off between the two constants;
nothing in what follows depends on the choice, and we take $\lambda=2$ for definiteness. The
constant is not optimal, and we have not tried to optimise it.
\end{remark}

\subsection{The window of admissible exponents}\label{sec:windowA}

\begin{proposition}[Verification of \eqref{eq:LL1} under \Hthree]\label{prop:compactA}
Assume \Hthree{} and take the choices \eqref{eq:triple}. Then
\begin{enumerate}[label=\textup{(\roman*)},leftmargin=2.4em]
\item hypothesis \eqref{eq:LL1} holds for every $p$ with $1\le p<2\alpha$;
\item if \eqref{eq:M2A} is the only information available on $\norm{u_{n}}_{\HH}$, it fails for
every $p$ with $2\alpha\le p<2$;
\item such a $p$ as in (i) exists if and only if $\alpha>\frac12$.
\end{enumerate}
\end{proposition}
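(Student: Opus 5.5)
The plan is to reduce \eqref{eq:LL1}, with the choices \eqref{eq:triple}, to the mapping property of $\IL{\alpha}$ recorded in Proposition~\ref{prop:threshold}(i) and Proposition~\ref{prop:nobound}. The key step is to change exponent: put $h_{n}:=\norm{u_{n}(\cdot)}^{p}_{\HH}$. The bound \eqref{eq:M2A} says exactly that $h_{n}\in L^{2/p}(0,T)$, with
\[
\norm{h_{n}}_{L^{2/p}(0,T)}=\Bigl(\int_{0}^{T}\norm{u_{n}}^{2}_{\HH}\Bigr)^{p/2}\le M_{2}^{p/2},
\]
uniformly in $n$. Hypothesis \eqref{eq:LL1} asks for $\sup_{t}\bigl(\IL{\alpha}h_{n}\bigr)(t)$ to be bounded uniformly in $n$. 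So everything turns on whether $\IL{\alpha}$ maps $L^{2/p}$ into $L^{\infty}$, and by Proposition~\ref{prop:threshold} and Remark~\ref{rem:endpoint} this happens exactly when $2/p>1/\alpha$, that is, when $p<2\alpha$.

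For (i), take $1\le p<2\alpha$. Then the exponent $2/p$ exceeds $1/\alpha$, and Proposition~\ref{prop:threshold}(i) with exponent $2/p$ gives
\[
\bigl(\IL{\alpha}h_{n}\bigr)(t)\le\kappa_{\alpha}(2/p,T)\,M_{2}^{p/2}
\]
for every $t$ and every $n$. This is \eqref{eq:LL1} with $C_{1}=\kappa_{\alpha}(2/p,T)M_{2}^{p/2}$. Part (iii) is then arithmetic: the interval $[1,2\alpha)$ is non-empty if and only if $2\alpha>1$.

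For (ii), I read ``\eqref{eq:M2A} is the only information'' as saying that \eqref{eq:LL1} must hold for the whole class $W$ of functions $v\in\LHone$ with $\int_{0}^{T}\norm{v}^{2}_{\HH}\le M_{2}$. I will show that the supremum over this class is infinite. Fix $w\in\HH$ with $\norm{w}_{\HH}=1$, and for a non-negative $h$ put $v:=h^{1/p}w$. Then $\norm{v(t)}^{p}_{\HH}=h(t)$ and $\int_{0}^{T}\norm{v}^{2}_{\HH}=\norm{h}^{2/p}_{L^{2/p}}$.

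The exponent $2/p$ ranges over $(1,2/\alpha\cdot\alpha]$, more precisely over $(1,1/\alpha]$ as $p$ ranges over $[2\alpha,2)$, and I split into two cases.
\begin{itemize}
\item If $2\alpha<p<2$, then $1<2/p<1/\alpha$. The concentrating family of Proposition~\ref{prop:nobound}(i) with exponent $2/p$ consists of functions of unit $L^{2/p}$ norm. After scaling by $M_{2}^{p/2}$, the corresponding $v_{\varrho}$ lie in $W$, while $\bigl(\IL{\alpha}h_{\varrho}\bigr)(t_{0})\to\infty$ as $\varrho\to0^{+}$.
\item If $p=2\alpha$, the concentrating family gives no divergence, since its value $\varrho^{\alpha-1/p}$ is identically $1$ there. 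This endpoint is where I expect the only real care to be needed. Here I take the function \eqref{eq:endpointh}, which lies in $L^{1/\alpha}=L^{2/p}$ and whose fractional average is infinite at $t_{0}$. I normalise it to have $L^{2/p}$ norm $M_{2}^{p/2}$ and form $v=h^{1/p}w\in W$. By the Fatou argument of Remark~\ref{rem:endpoint}, $\sup_{t}\bigl(\IL{\alpha}\norm{v}^{p}_{\HH}\bigr)(t)=\infty$, so \eqref{eq:LL1} fails even for a single element of $W$.
\end{itemize}

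The restriction $p<2$ in (ii) is used only to guarantee $2/p>1$, so that both constructions live in the stated Lebesgue range.
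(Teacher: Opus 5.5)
Your proposal is correct and follows the paper's proof essentially step for step: the H\"older bound at exponent $2/p$ (Proposition~\ref{prop:threshold}(i)) for (i), the concentrating family $\varrho^{-1/2}\mathbf{1}_{[t_{0}-\varrho,t_{0}]}$ for $2\alpha<p<2$, and the logarithmic function \eqref{eq:endpointh}, suitably normalised, at the endpoint $p=2\alpha$. The only nuance is that the paper reads (iii) as saying that the set of exponents admissible in \eqref{eq:LL1} is non-empty exactly when $\alpha>\tfrac12$; at $\alpha=\tfrac12$ that reading rests on your endpoint case $p=2\alpha=1$ of (ii), not on arithmetic alone, and you have already proved that case.
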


\begin{proof}
(i) We set $\Psi_{n}(s):=\norm{u_{n}(s)}_{\HH}$. By \eqref{eq:M2A}, $\Psi_{n}\in L^{2}(0,T)$ with
$\norm{\Psi_{n}}^{2}_{L^{2}}\le M_{2}$ uniformly in $n$, and that energy bound is all we know
about $\Psi_{n}$ in this case; the threshold comes out of it. Let $1\le p<2$, so that
$\Psi_{n}^{p}\in L^{2/p}(0,T)$ with
$\norm{\Psi^{p}_{n}}_{L^{2/p}}=\norm{\Psi_{n}}^{p}_{L^{2}}$, and let us apply H\"older on $(0,t)$
with the conjugate exponents $q=2/(2-p)$ and $q'=2/p$:
\[
\int_{0}^{t}(t-s)^{\alpha-1}\Psi_{n}(s)^{p}ds
\le\norm{(t-\cdot)^{\alpha-1}}_{L^{q}(0,t)}\norm{\Psi^{p}_{n}}_{L^{q'}(0,t)} .
\]
After the change $\sigma=t-s$ the first norm is finite exactly when $q(\alpha-1)>-1$, that is
\[
\frac{2}{2-p}(\alpha-1)>-1
\iff 2(\alpha-1)>-(2-p)
\iff 2\alpha-2>p-2
\iff\boxed{\,p<2\alpha\,},
\]
the first equivalence using $2-p>0$; and in that case it equals
$\bigl(t^{q(\alpha-1)+1}/(q(\alpha-1)+1)\bigr)^{1/q}$, increasing in $t$ and hence bounded on
$(0,T)$ by its value at $T$. The second norm is at most $M_{2}^{p/2}$. This is \eqref{eq:LL1}.

(ii) Let $2\alpha<p<2$ and let $C_{1}>0$ be given. Fix $t_{0}\in(0,T)$ and, for
$\varrho\in(0,t_{0})$, set
$\Psi_{\varrho}:=M_{2}^{1/2}\varrho^{-1/2}\mathbf{1}_{[t_{0}-\varrho,t_{0}]}$, so
that $\norm{\Psi_{\varrho}}^{2}_{L^{2}(0,T)}=M_{2}$, which is everything \eqref{eq:M2A}
asserts about $\norm{u_{n}(\cdot)}_{\HH}$. With the change $\sigma=t_{0}-s$,
\[
\bigl(\IL{\alpha}\Psi_{\varrho}^{p}\bigr)(t_{0})
=\frac{M_{2}^{p/2}}{\Gamma(\alpha+1)}\,\varrho^{\,\alpha-p/2}
\xrightarrow[\varrho\to0^{+}]{}\infty ,
\]
the exponent being negative because $p>2\alpha$. No bound \eqref{eq:LL1} therefore follows from
\eqref{eq:M2A} alone for such $p$. At the endpoint $p=2\alpha$ that exponent vanishes and the
family above stays bounded, so we appeal to Remark~\ref{rem:endpoint} instead. Writing $q:=2/p$,
hypothesis \eqref{eq:LL1} at the exponent $p$ is \eqref{eq:mapping} applied to
$\Psi^{p}_{n}\in L^{q}(0,T)$, and the admissibility condition $q>1/\alpha$ is $p<2\alpha$: the
endpoint of the window is the endpoint of the threshold. Explicitly, fix $t_{0}\in(0,T)$, let $h$
be as in \eqref{eq:endpointh} and set $c:=M_{2}(1-\alpha)/\alpha$ and
\[
\Psi:=c^{1/2}h^{1/(2\alpha)}
=c^{1/2}(t_{0}-s)^{-1/2}\Bigl(\log\frac{e\,t_{0}}{t_{0}-s}\Bigr)^{-1/(2\alpha)} .
\]
Then $\Psi^{2}=c\,h^{1/\alpha}$, so $\norm{\Psi}^{2}_{L^{2}(0,T)}=M_{2}$ by
Remark~\ref{rem:endpoint}, which is again everything \eqref{eq:M2A} asserts, while
$\Psi^{2\alpha}=c^{\alpha}h$ and therefore
$\bigl(\IL{\alpha}\Psi^{2\alpha}\bigr)(t)\to\infty$ as $t\uparrow t_{0}$. The exponent
$p=2\alpha$ is inadmissible as well, and (ii) holds on all of $[2\alpha,2)$.

(iii) Lemma~\ref{lem:liliu} needs $p\ge1$ for $L^{p}\bigl((0,T);X_{1}\bigr)$ to make sense, so by
(i) and (ii) the admissible exponents form exactly the interval $[1,2\alpha)$. The endpoint of
(ii) is what decides the case $\alpha=\frac12$: there $2\alpha=1$, and $p=1$ is the only exponent
the lemma would admit at all, so without it the interval could not be asserted to be empty.
Hence
\[
[1,2\alpha)\neq\emptyset\iff2\alpha>1\iff\alpha>\tfrac12 . \qedhere
\]
\end{proof}

\subsection{Proof of Theorem~\ref{thm:A}}\label{sec:proofA}

\begin{proof}[Proof of Theorem~\ref{thm:A}]
Let $\alpha\in(\frac12,1)$ and let us fix $p\in[1,2\alpha)$, a non-empty range by
Proposition~\ref{prop:compactA}; observe that $p<2\alpha<2$. Hypothesis (a) of
Proposition~\ref{prop:package} is \eqref{eq:M2A} and hypothesis (b) is
Proposition~\ref{prop:compactA}(i) for that $p$. From Proposition~\ref{prop:package} we therefore
obtain a subsequence converging strongly in $L^{p}\bigl((0,T);\LL\bigr)$ and weakly in $\LHone$,
and a function $u\in\WscaleG$ which is a weak solution of \eqref{eq:P};
Theorem~\ref{thm:uniqueness} makes it the only one. This is (i).

For (ii) no further extraction is needed, and the conclusion is about the whole sequence. Let
$p\in[1,2\alpha)$. Every subsequence of $\{u_{n}\}$ satisfies \eqref{eq:M2A} and
Proposition~\ref{prop:compactA}(i) verbatim, so Proposition~\ref{prop:package} applies to it and
yields a further subsequence converging strongly in $L^{p}\bigl((0,T);\LL\bigr)$ to a weak
solution, which is $u$ by Theorem~\ref{thm:uniqueness}. Every subsequence therefore has a further
subsequence converging to $u$ in that norm, and hence the whole sequence does; the exponent $p$
was arbitrary, and the same argument gives the weak convergence in $\LHone$. The bound
\eqref{eq:spectralbound} with $q=2$, admissible because $\alpha>\frac12$, is uniform in $n$, so
the same subsequence argument applied in $\LinfL$, the dual of $L^{1}\bigl((0,T);\LL\bigr)$,
yields the weak-$*$ convergence of the whole sequence as well.

Part (iii) is Proposition~\ref{prop:spectralbound} with $q=2$, which is admissible because
$2>1/\alpha$ for $\alpha>\frac12$: part (i) of that proposition bounds the approximations
uniformly, and part (ii), whose hypothesis is the weak convergence just established, carries the
bound to $u$.
\end{proof}

\begin{remark}[The window and the convergence are two different things]\label{rem:window}
The interval $[1,2\alpha)$ is the range of exponents for which the compactness criterion can be
\emph{applied} with the bound \eqref{eq:M2A}; it is not the range in which convergence is
\emph{obtained}. Indeed $\{u_{n}\}$ is bounded in $\LLtwo$ by \eqref{eq:M2A} and Poincar\'e, so
for $p\in[1,2\alpha)$ and $s\in(p,2)$, with $\theta\in(0,1)$ defined by
$s^{-1}=\theta p^{-1}+(1-\theta)2^{-1}$,
\[
\norm{u_{n}-u}_{L^{s}((0,T);\LL)}
\le\norm{u_{n}-u}^{\theta}_{L^{p}((0,T);\LL)}\,
\norm{u_{n}-u}^{1-\theta}_{\LLtwo},
\]
whose second factor is bounded; the convergence of Theorem~\ref{thm:A}(ii) therefore holds in
$L^{s}\bigl((0,T);\LL\bigr)$ for every $s<2$. We state the theorem with $[1,2\alpha)$ because
that interval, and not $[1,2)$, is what the criterion sees, and it is the interval whose
non-emptiness produces the restriction on $\alpha$.
\end{remark}

\subsection{Consequences}\label{sec:consequencesA}

The solution inherits the bounds of Proposition~\ref{prop:boundsA}. Norms are weakly lower
semicontinuous, so \eqref{eq:N1} and \eqref{eq:M2A} pass to the limit and we obtain
\begin{equation}\label{eq:limitbounds}
\norm{u}^{2}_{\LLtwo}\le N_{1},\qquad\norm{u}^{2}_{\LHone}\le M_{2} .
\end{equation}
The strong convergence is itself information about the approximating sequence that a purely weak
argument would not supply, and it is what the compactness route is for.

The restriction $\alpha>\frac12$ is one of the method and not of the problem.
Proposition~\ref{prop:compactA}
states that the compactness criterion, applied with the only bound available here, admits no
exponent when $\alpha\le\frac12$. It does not state, and cannot state, that no solution exists.
In fact one does: Zacher \cite{zacher09} proves existence and uniqueness in this scale for every
$\alpha\in(0,1)$, under the weaker hypothesis $f\in L^{2}\bigl((0,T);\HM\bigr)$ and with merely
bounded measurable coefficients.

That argument uses no compactness at all, and it need not, since \eqref{eq:P} is linear. The
three identities we traverse in the passage to the limit, \eqref{eq:ibp-n},
\eqref{eq:galerkinweak} and \eqref{eq:limitequation}, are linear in $u_{n}$. In each of them we
pair the unknown with a fixed object and never with itself inside a nonlinearity, and weak
convergence suffices to pass to the limit in a linear pairing against a continuous functional.
One needs strong compactness only when the equation carries a nonlinear term, such as a product
$u_{n}\nabla u_{n}$ or a power $\abs{u_{n}}^{p-2}u_{n}$, because composition with a nonlinear
function does not preserve weak convergence.

The restriction can be traced. The route gives us only
the differential inequality \eqref{eq:diffineq}; any attempt to extract a pointwise value in $t$
from it ends in evaluating $\IL{\alpha}\norm{f}^{2}_{\LL}$ at a point, and
Proposition~\ref{prop:nobound} forbids that. Our energy bounds are therefore averaged, the
compactness criterion sees only the energy norm, and the window of exponents closes at
$\alpha=\frac12$. The
cut is thus a phenomenon of small orders and worsens as $\alpha$ decreases, for the same reason
the concentration argument works: the more singular the kernel, the more mass an $L^{2}$ source
can pile up where the weight is largest, and the less the energy scale can absorb.

Whether $\alpha=\frac12$ means anything for the problem itself, rather than for this
construction, is a separate question with a separate answer, and we settle it in
Section~\ref{sec:sharp}.

\section{The case \texorpdfstring{\Hthreep}{(H3')}: the stronger hypothesis}\label{sec:caseB}

\subsection{Statements}

\begin{theorem}[Existence and regularity under \Hthreep]\label{thm:B}
Let $\alpha\in(0,1)$ and assume \Hone, \Htwo, \Hthreep{} and \Hfour. Then
\begin{enumerate}[label=\textup{(\roman*)},leftmargin=2.4em]
\item problem \eqref{eq:P} has a weak solution $u\in\WscaleG$;
\item that solution satisfies
\begin{equation}\label{eq:extrareg}
u\in\LinfL,
\end{equation}
with the explicit bound \eqref{eq:M1} below;
\item the Galerkin approximations converge to $u$, the whole sequence, strongly in $\LLtwo$,
weakly in $\LHone$ and weakly-$*$ in $\LinfL$.
\end{enumerate}
\end{theorem}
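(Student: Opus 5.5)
We obtain uniform bounds from the energy inequality \eqref{eq:energyL2}, this time pointwise in $t$, and then apply Proposition~\ref{prop:package} with $p=2$. Under \Hthreep{}, estimate \eqref{eq:kappa} of Proposition~\ref{prop:threshold}(ii) bounds the source term $\bigl(\IL{\alpha}\norm{f}^{2}_{\LL}\bigr)(t)$ by $\kappa_{\alpha}(r/2,T)\norm{f}^{2}_{\LLr}$, uniformly in $t$. Inserting this and $\norm{g_{n}}_{\LL}\le\norm{g}_{\LL}$ into \eqref{eq:energyL2} with a free $\varepsilon>0$ gives
\[
y_{n}(t):=\norm{u_{n}(t)}^{2}_{\LL}\le a+\varepsilon\bigl(\IL{\alpha}y_{n}\bigr)(t),\qquad a:=\norm{g}^{2}_{\LL}+\varepsilon^{-1}\kappa_{\alpha}(\tfrac r2,T)\norm{f}^{2}_{\LLr}.
\]
The constant $a$ is trivially non-decreasing, so Lemma~\ref{lem:gronwall} yields $y_{n}(t)\le a\,E_{\alpha}(\varepsilon t^{\alpha})$. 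Taking the supremum over $t\le T$ defines the constant $M_{1}$ of \eqref{eq:M1}:
\[
\sup_{t}\norm{u_{n}(t)}^{2}_{\LL}\le M_{1}:=a\,E_{\alpha}(\varepsilon T^{\alpha}).
\]
This holds first for $f\in\mathcal{D}$ and then for general $f$ via Lemma~\ref{lem:dens}(iii), using the $C([0,T];V_{n})$ branch of Lemma~\ref{lem:modes}(iii), which is admissible since $r>1/\alpha$.

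Next we bound the $\HH$ norm. In \eqref{eq:energyH1} the right-hand side is now bounded by $\norm{g}^{2}_{\LL}+\varepsilon^{-1}\kappa\norm{f}^{2}_{\LLr}+\varepsilon T^{\alpha}M_{1}/\Gamma(\alpha+1)=:B'$, uniformly in $t$. This gives
\[
2a_{0}\bigl(\IL{\alpha}\norm{u_{n}}^{2}_{\HH}\bigr)(t)\le B'\qquad\text{for all }t.
\]
This single bound does two jobs. At $t=T$ the kernel satisfies $(T-s)^{\alpha-1}\ge T^{\alpha-1}$, so $\int_{0}^{T}\norm{u_{n}}^{2}_{\HH}\le\Gamma(\alpha)T^{1-\alpha}B'/(2a_{0})$. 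This is hypothesis (a) of Proposition~\ref{prop:package}, and no detour to $[0,2T]$ is needed, since the weight $(T-s)^{\alpha-1}$ is bounded below rather than vanishing. The same bound is also exactly \eqref{eq:LL1} with $p=2$ and the choices \eqref{eq:triple}. Since $\rho=2>2/(1+2\alpha)$, Lemma~\ref{lem:liliu} applies at $p=2$ for every $\alpha\in(0,1)$, which gives hypothesis (b). Proposition~\ref{prop:package} then produces a weak solution $u\in\WscaleG$ together with subsequential strong convergence in $\LLtwo$ and weak convergence in $\LHone$. This is (i).

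For (ii), the uniform bound on $\sup_{t}\norm{u_{n}(t)}_{\LL}$ gives a weak-$*$ convergent subsequence in $\LinfL$, the dual of $L^{1}((0,T);\LL)$. Its limit is $u$ by uniqueness of distributional limits, and weak-$*$ lower semicontinuity transfers $M_{1}$ to $u$. Alternatively, Proposition~\ref{prop:spectralbound} with $q=r>1/\alpha$ gives a sharper constant, but the energy constant $M_{1}$, carrying $\varepsilon$, is the one needed later for stability. For (iii), every subsequence satisfies the same bounds, so we repeat the subsequence-of-subsequence argument from the proof of Theorem~\ref{thm:A}. By Theorem~\ref{thm:uniqueness} every limit is $u$, so the whole sequence converges in each of the three topologies.

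The main obstacle is the first step, namely that Gr\"onwall may be applied pointwise to $y_{n}$ itself. This needs $a$ to be a genuine constant, which is precisely where $r>2/\alpha$ enters through \eqref{eq:kappa}. Checking that \eqref{eq:LL1} holds at $p=2$, rather than only in a window, is then immediate from the same pointwise control.
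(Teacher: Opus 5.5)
Your proposal is correct and follows the paper's route. You apply Gr\"onwall directly to $\norm{u_{n}(t)}^{2}_{\LL}$, with the constant independent term supplied by \eqref{eq:kappa}. You then use the uniform-in-$t$ bound on $\bigl(\IL{\alpha}\norm{u_{n}}^{2}_{\HH}\bigr)(t)$ twice: at $t=T$ with $(T-s)^{\alpha-1}\ge T^{\alpha-1}$ for hypothesis (a), and as \eqref{eq:LL1} with $p=2$ for hypothesis (b) of Proposition~\ref{prop:package}. The proof closes with weak-$*$ extraction and the subsequence argument, all as in the paper, which merely splits the $\HH$ step into Propositions~\ref{prop:boundsB}(ii) and~\ref{prop:compactB} with $\varepsilon=1$ and justifies evaluation at $t=T$ by working first with $f\in\mathcal{D}$, where both sides are continuous.
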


The solution is unique by Theorem~\ref{thm:uniqueness}. Conclusion (ii) also follows from
Proposition~\ref{prop:spectralbound} with $q=r$, in a simpler form: no Mittag-Leffler factor, no
free parameter, and the bound in the norm rather than in its square. We keep \eqref{eq:M1}
because it is what the energy route produces and what Theorem~\ref{thm:stability} uses. What
\Hthreep{} contributes to (ii) is not the conclusion, which Theorem~\ref{thm:A}(iii) already
gives for $\alpha>\frac12$, but its validity in the range $\alpha\le\frac12$, where by
Proposition~\ref{prop:sharp} it is false under \Hthree.

\begin{corollary}[The uniform bound at the exponent
\texorpdfstring{$1/\alpha$}{1/alpha}]\label{cor:oneoveralpha}
Let $\alpha\in(0,1)$, assume \Hone, \Htwo{} and \Hfour, and suppose the hypotheses of
Theorem~\ref{thm:A} or those of Theorem~\ref{thm:B} to hold, so that \eqref{eq:P} has a weak
solution $u$ obtained as a limit of the Galerkin approximations. If in addition $f\in\LLq$ for
some $q>1/\alpha$, then $u\in\LinfL$ and
\begin{equation}\label{eq:Linfq}
\norm{u}_{\LinfL}\le\norm{g}_{\LL}+\kappa_{\alpha}(q,T)\norm{f}_{\LLq},
\end{equation}
with $\kappa_{\alpha}(q,T)$ the constant of \eqref{eq:mapping}.
\end{corollary}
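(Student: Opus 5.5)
The corollary is essentially Proposition~\ref{prop:spectralbound} read against the two existence theorems, and I would prove it by checking the hypotheses of that proposition in each case. Under either set of hypotheses the source already lies in $\LLtwo$: directly under \Hthree, and by Proposition~\ref{prop:hierarchy}(i) under \Hthreep. In particular $f\in L^{1}\bigl((0,T);\LL\bigr)$, so Lemma~\ref{lem:modesofu} applies. Together with the extra assumption $f\in\LLq$ for some $q>1/\alpha$, this is exactly what Proposition~\ref{prop:spectralbound}(i) asks for. That part gives, for every $n$,
\[
\sup_{t\in[0,T]}\norm{u_{n}(t)}_{\LL}\le\norm{g}_{\LL}+\kappa_{\alpha}(q,T)\norm{f}_{\LLq},
\]
uniformly in $n$. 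This step uses neither the energy route nor any restriction on $\alpha$ beyond $q>1/\alpha$.

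Next I would verify the hypothesis of Proposition~\ref{prop:spectralbound}(ii): some subsequence of $\{u_{n}\}$ converges weakly in $\LHone$ to the weak solution $u$. Under the hypotheses of Theorem~\ref{thm:A}, part (ii) of that theorem supplies weak convergence of the whole sequence. Under those of Theorem~\ref{thm:B}, part (iii) does the same. In both cases the limit is the weak solution, which is unique by Theorem~\ref{thm:uniqueness}, so the limit is indeed the $u$ of the statement. Proposition~\ref{prop:spectralbound}(ii) then extracts a weak-$*$ convergent subsequence in $\LinfL$, identifies its limit with $u$ in $\mathcal{D}'\bigl((0,T);\LL\bigr)$, and transfers the bound by weak-$*$ lower semicontinuity. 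This yields \eqref{eq:Linfq}.

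The only point needing care is that the exponent $q$ may differ from the exponent used to construct the solution. For example, under \Hthreep{} one has $r>2/\alpha$ while $q$ may be any exponent above $1/\alpha$, and under \Hthree{} $q$ may exceed $2$. Neither case causes trouble. The Galerkin sequence \eqref{eq:un} depends only on $f$ and $g$, not on the exponent in which $f$ is measured, so the sequence bounded in the first step is the same sequence whose convergence the theorems provide. I therefore expect no genuine obstacle. The step to state carefully is the identification of the weak-$*$ limit with the weak solution, which rests on uniqueness of distributional limits and on Theorem~\ref{thm:uniqueness}.
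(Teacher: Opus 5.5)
Your proposal is correct and is essentially the paper's own proof. You apply Proposition~\ref{prop:spectralbound}(i) with the exponent $q$ to get the uniform bound on $u_{n}$, and the weak convergence in $\LHone$ from Theorem~\ref{thm:A}(ii) or Theorem~\ref{thm:B}(iii) is exactly the hypothesis of Proposition~\ref{prop:spectralbound}(ii), which carries the bound to $u$; your observation that the Galerkin sequence does not depend on the exponent in which $f$ is measured is the point the paper's one-line argument leaves implicit.
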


\begin{proof}
Theorem~\ref{thm:A}(ii) and Theorem~\ref{thm:B}(iii) each supply the weak convergence in $\LHone$
that Proposition~\ref{prop:spectralbound}(ii) asks for, and the conclusion is that proposition
applied with the exponent $q$.
\end{proof}

Two numbers that the statement of Theorem~\ref{thm:B} keeps together are thereby separated. The
conclusion $u\in\LinfL$ requires only $q>1/\alpha$, and requires it at every order; the exponent
$r>2/\alpha$ of \Hthreep{} is what the energy route of Section~\ref{sec:boundsB} consumes in
reaching that same conclusion with the explicit constant \eqref{eq:M1}, which
Theorem~\ref{thm:stability} then converts into a stability estimate. Neither number, however, is
the optimal integrability for the existence of a solution, and the corollary should not be read as
if it were: it presupposes a solution and improves its regularity. For $q$ in the interval
$\bigl(1/\alpha,2/\alpha\bigr]$ we offer no construction, the passage to the limit of
Sections~\ref{sec:windowA} and~\ref{sec:caseB} resting on the energy inequalities and on nothing
weaker. Whether that gap can be closed is among the questions left open in
Section~\ref{sec:conclusions}.

\begin{theorem}[Stability]\label{thm:stability}
Let $\alpha\in(0,1)$ and $\varepsilon>0$, assume \Hone, \Htwo, \Hthreep{} and \Hfour, and
let $U_{1},U_{2}$ be the weak solutions associated with two pairs of data. Write $W:=U_{1}-U_{2}$,
$\tilde g:=g^{(1)}-g^{(2)}$ and $\tilde f:=f^{(1)}-f^{(2)}$. Then
\begin{equation}\label{eq:stability}
\norm{W}_{\LinfL}\le K_{\varepsilon}(T,\alpha)
\Bigl(\norm{\tilde g}_{\LL}+\norm{\tilde f}_{\LLr}\Bigr),
\end{equation}
where
\begin{equation}\label{eq:Keps}
K_{\varepsilon}(T,\alpha):=\left[E_{\alpha}\bigl(\varepsilon T^{\alpha}\bigr)
\max\Bigl\{1,\tfrac{1}{\varepsilon}\kappa_{\alpha}\!\left(\tfrac r2,T\right)\Bigr\}
\right]^{1/2},
\end{equation}
and $\kappa_{\alpha}(r/2,T)$ is the constant of \eqref{eq:kappa}.
\end{theorem}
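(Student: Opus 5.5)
By Corollary~\ref{cor:linear}, the weak solution is linear in the data and unique by Theorem~\ref{thm:uniqueness}, so $W$ is the weak solution for the data $(\tilde g,\tilde f)$. These data again satisfy \Hfour{} and \Hthreep{} with the same $r$, since both $f^{(1)},f^{(2)}$ lie in $\LLr$. It therefore suffices to prove an a priori bound $\norm{u}_{\LinfL}\le K_{\varepsilon}(T,\alpha)\bigl(\norm{g}_{\LL}+\norm{f}_{\LLr}\bigr)$ for the solution constructed in Theorem~\ref{thm:B}. By Corollary~\ref{cor:linear}(ii)--(iii) this bound transfers to $W$ with $(\tilde g,\tilde f)$ in place of $(g,f)$. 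The bound itself I would derive on the Galerkin approximations from the energy inequality \eqref{eq:energyL2}, keeping $\varepsilon$ free. That inequality holds for every $f\in\LLtwo$ by Lemma~\ref{lem:dens}(iii), hence under \Hthreep{}.

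The key step is to make the source term in \eqref{eq:energyL2} uniform in $t$. Proposition~\ref{prop:threshold}(ii) gives $\bigl(\IL{\alpha}\norm{f}^{2}_{\LL}\bigr)(t)\le\kappa_{\alpha}(r/2,T)\norm{f}^{2}_{\LLr}$ for every $t$; this is exactly where $r>2/\alpha$ is consumed. Together with $\norm{g_{n}}_{\LL}\le\norm{g}_{\LL}$ from \eqref{eq:gnconv}, this yields
\[
\norm{u_{n}(t)}^{2}_{\LL}\le a+\varepsilon\bigl(\IL{\alpha}\norm{u_{n}}^{2}_{\LL}\bigr)(t),
\qquad
a:=\norm{g}^{2}_{\LL}+\tfrac{1}{\varepsilon}\kappa_{\alpha}\!\left(\tfrac r2,T\right)\norm{f}^{2}_{\LLr}.
\]
Here $a$ is a constant, hence non-decreasing, and $y=\norm{u_{n}(\cdot)}^{2}_{\LL}$ is integrable. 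Lemma~\ref{lem:gronwall} then gives $\norm{u_{n}(t)}^{2}_{\LL}\le a\,E_{\alpha}(\varepsilon t^{\alpha})\le a\,E_{\alpha}(\varepsilon T^{\alpha})$, the last step because $E_{\alpha}$ is increasing on $[0,\infty)$ (positive coefficients). Bounding $a\le\max\{1,\kappa_{\alpha}(r/2,T)/\varepsilon\}\bigl(\norm{g}^{2}_{\LL}+\norm{f}^{2}_{\LLr}\bigr)$ and using $\sqrt{x^{2}+y^{2}}\le x+y$, we get $\sup_{t}\norm{u_{n}(t)}_{\LL}\le K_{\varepsilon}(T,\alpha)\bigl(\norm{g}_{\LL}+\norm{f}_{\LLr}\bigr)$, uniformly in $n$. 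The inequality holds only for a.e.\ $t$, which is enough for an $\LinfL$ bound.

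To pass to the limit I would use Theorem~\ref{thm:B}(iii): the whole sequence converges weakly-$*$ in $\LinfL$ to $u$. Weak-$*$ lower semicontinuity of the norm, exactly as in Proposition~\ref{prop:spectralbound}(ii), then carries the bound to $u$. Applying it to the data $(\tilde g,\tilde f)$ gives \eqref{eq:stability}.

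I expect the main obstacle to be bookkeeping rather than analysis. First, the a priori bound must be stated for an arbitrary admissible data pair, so that Corollary~\ref{cor:linear}(iii) applies with $\Phi(g,f)=K_{\varepsilon}(\norm{g}+\norm{f})$. Second, the energy inequality must be invoked at the generality of Lemma~\ref{lem:dens}, not only for $f\in\mathcal{D}$. Third, the difference data must be checked to still satisfy \Hthreep{}, which is immediate since $\LLr$ is a vector space.
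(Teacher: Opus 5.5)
Your proposal is correct and follows the paper's proof. Corollary~\ref{cor:linear} and uniqueness identify $W$ as the solution for $(\tilde g,\tilde f)$, the bound \eqref{eq:M1} is passed from the Galerkin approximations to the limit by weak-$*$ lower semicontinuity as in Theorem~\ref{thm:B}, and the bracket is bounded by its largest coefficient before taking square roots. The only difference is that you re-derive \eqref{eq:M1}, that is Proposition~\ref{prop:boundsB}(i), from the a.e.\ energy inequality of Lemma~\ref{lem:dens}(iii) instead of citing it. This is harmless, since altering $\norm{u_{n}(\cdot)}^{2}_{\LL}$ on a null set leaves its fractional integral unchanged.
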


The parameter is free, so \eqref{eq:stability} holds with the infimum of
$K_{\varepsilon}(T,\alpha)$ over $\varepsilon>0$; the two entries of the maximum balance at
$\varepsilon=\kappa_{\alpha}(r/2,T)$, which need not be the optimal choice, the Mittag-Leffler
factor being increasing in $\varepsilon$.

\begin{corollary}[Well-posedness in the sense of Hadamard]\label{cor:hadamard}
Let $\alpha\in(0,1)$ and assume \Hone, \Htwo, \Hthreep{} and \Hfour. Then problem \eqref{eq:P}
is well posed in the sense of Hadamard: a weak solution exists, it is unique, and it depends
continuously on the data in the norms of \eqref{eq:stability}.
\end{corollary}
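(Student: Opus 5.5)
The plan is to assemble Corollary~\ref{cor:hadamard} from three results already in hand: Theorem~\ref{thm:B} for existence, Theorem~\ref{thm:uniqueness} for uniqueness, and Theorem~\ref{thm:stability} for continuous dependence. Nothing new has to be estimated. The only work is checking that the hypotheses line up and that the norms in which continuity is asserted are well defined on the solution set.

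\emph{Existence.} Assume \Hone, \Htwo, \Hthreep{} and \Hfour. Theorem~\ref{thm:B}(i) then gives, for every $\alpha\in(0,1)$, a weak solution $u\in\WscaleG$, and Theorem~\ref{thm:B}(ii) places it in $\LinfL$.

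\emph{Uniqueness.} By Proposition~\ref{prop:hierarchy}(i), \Hthreep{} implies \Hthree{}. Theorem~\ref{thm:uniqueness}, which requires only \Hthree{}, therefore applies for every $\alpha\in(0,1)$, and the solution is unique. In particular the solution map $(g,f)\mapsto u$ is well defined, and by Corollary~\ref{cor:linear}(i) it is linear from $\LL\times\LLr$ into $\LinfL$.

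\emph{Continuous dependence.} Let $(g^{(k)},f^{(k)})$ converge to $(g,f)$ in $\LL\times\LLr$. Apply Theorem~\ref{thm:stability} to the pair formed by the $k$-th data and the limit data, with any fixed $\varepsilon>0$, for instance $\varepsilon=\kappa_{\alpha}(r/2,T)$. This gives
\[
\norm{u^{(k)}-u}_{\LinfL}\le K_{\varepsilon}(T,\alpha)\Bigl(\norm{g^{(k)}-g}_{\LL}+\norm{f^{(k)}-f}_{\LLr}\Bigr)\to0 .
\]
So the solution map is Lipschitz, hence continuous, in the norms of \eqref{eq:stability}.

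\emph{The one point needing care.} The constant $K_{\varepsilon}$ depends on the exponent $r$ through $\kappa_{\alpha}(r/2,T)$, and \Hthreep{} is existential in $r$. Both data pairs must therefore lie in $\LLr$ for the same $r>2/\alpha$. Continuity should be stated in the topology of $\LLr$ for a fixed such $r$, which is what ``the norms of \eqref{eq:stability}'' means. With that reading, the three properties together constitute Hadamard well-posedness.
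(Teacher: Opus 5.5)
Your proposal is correct and is exactly the paper's proof, which cites Theorem~\ref{thm:B} for existence, Theorem~\ref{thm:uniqueness} for uniqueness and Theorem~\ref{thm:stability} for continuous dependence. Your caveat about the exponent needs no special reading: two sources in $L^{r_1}$ and $L^{r_2}$ with $r_1,r_2>2/\alpha$ both lie in $L^{\min\{r_1,r_2\}}\bigl((0,T);\LL\bigr)$ on the bounded interval, so a common $r$ can always be chosen.
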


Since $\Hthreep\Rightarrow\Hthree$, the whole of Section~\ref{sec:common} applies without change,
and so does Proposition~\ref{prop:boundsA}, whose proof used nothing beyond \Hthree. What remains is what the
reinforcement adds: a bound uniform in $t$, now for every order.

\subsection{The uniform bound}\label{sec:boundsB}

\begin{proposition}[Uniform bounds under \Hthreep]\label{prop:boundsB}
Let $\alpha\in(0,1)$ and assume \Hone, \Htwo, \Hthreep{} and \Hfour. Then
\begin{enumerate}[label=\textup{(\roman*)},leftmargin=2.4em]
\item for every $\varepsilon>0$ and every $n$,
\begin{equation}\label{eq:M1}
\sup_{t\in[0,T]}\norm{u_{n}(t)}^{2}_{\LL}\le M_{1}(\varepsilon)
:=E_{\alpha}\bigl(\varepsilon T^{\alpha}\bigr)
\left(\norm{g}^{2}_{\LL}
+\frac{1}{\varepsilon}\kappa_{\alpha}\!\left(\tfrac r2,T\right)\norm{f}^{2}_{\LLr}\right);
\end{equation}
\item there is $M_{2}'>0$, independent of $n$, with
$\int_{0}^{T}\norm{u_{n}(t)}^{2}_{\HH}dt\le M_{2}'$.
\end{enumerate}
\end{proposition}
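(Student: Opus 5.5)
For part (i) we start from the energy inequality \eqref{eq:energyL2}. By Lemma~\ref{lem:dens}(iii) it holds for every $f\in\LLtwo$, hence under \Hthreep{} by Proposition~\ref{prop:hierarchy}(i), for a.e.\ $t$ and with $\varepsilon>0$ still free.

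The first step is to bound the source term uniformly in $t$. This is Proposition~\ref{prop:threshold}(ii), estimate \eqref{eq:kappa}:
\[
\bigl(\IL{\alpha}\norm{f}^{2}_{\LL}\bigr)(t)\le\kappa_{\alpha}\!\left(\tfrac r2,T\right)\norm{f}^{2}_{\LLr}
\]
for every $t\in[0,T]$. It is exactly here that $r>2/\alpha$ is consumed. Together with $\norm{g_{n}}_{\LL}\le\norm{g}_{\LL}$ from \eqref{eq:gnconv}, this turns \eqref{eq:energyL2} into
\[
y_{n}(t)\le a+\varepsilon\bigl(\IL{\alpha}y_{n}\bigr)(t),\qquad y_{n}:=\norm{u_{n}(\cdot)}^{2}_{\LL},
\]
where $a:=\norm{g}^{2}_{\LL}+\varepsilon^{-1}\kappa_{\alpha}(r/2,T)\norm{f}^{2}_{\LLr}$ is a constant, hence non-decreasing. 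Lemma~\ref{lem:gronwall} with $\gamma=\alpha$ then gives $y_{n}(t)\le a\,E_{\alpha}(\varepsilon t^{\alpha})\le M_{1}(\varepsilon)$, using that $E_{\alpha}$ is increasing on $[0,\infty)$ (all its Taylor coefficients are positive).

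The one point needing care is passing from a.e.\ $t$ to the supremum over $[0,T]$. Under \Hthreep{} we have $u_{n}\in C([0,T];V_{n})$ by \eqref{eq:unC}. So $y_{n}$ is continuous, and the a.e.\ bound extends to every $t\in[0,T]$, including $t=0$, where it reduces to $\norm{g_{n}}^{2}_{\LL}\le\norm{g}^{2}_{\LL}$. Lemma~\ref{lem:gronwall} asks for local integrability of $y_{n}$, which continuity gives. If the Gr\"onwall lemma is read as requiring the hypothesis at every $t$, we first redefine the inequality by continuity: both sides are continuous in $t$, since $\IL{\alpha}$ of a bounded function is continuous for $\alpha>0$. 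Equivalently, one can argue first for $f\in\mathcal{D}$, where everything is pointwise, and extend through the $C([0,T];V_{n})$ branch of Lemma~\ref{lem:modes}(iii), as Remark~\ref{rem:density} prescribes.

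For part (ii) the quickest route is to quote Proposition~\ref{prop:boundsA}, whose proof used only \Hthree. That gives $M_{2}'=M_{2}$, with $\norm{f}_{\LLtwo}$ controlled by $T^{1/2-1/r}\norm{f}_{\LLr}$ through \eqref{eq:inclusion}. A more direct alternative avoids the $2T$ detour: integrate \eqref{eq:integralform} in time, or apply it at $t=T$ and use $(T-s)^{\alpha-1}\ge T^{\alpha-1}$. With the uniform bound from (i), this gives
\[
2a_{0}T^{\alpha-1}\Gamma(\alpha)^{-1}\int_{0}^{T}\norm{u_{n}}^{2}_{\HH}\le\norm{g}^{2}_{\LL}+\kappa_{\alpha}(r/2,T)\norm{f}^{2}_{\LLr}+T^{\alpha}M_{1}(1)/\Gamma(\alpha+1).
\]
Evaluating at the single point $T$ is legitimate because both sides are continuous there, by the same continuity argument as in (i). I would state the quoted version and mention the direct one. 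The main obstacle, mild as it is, is the a.e.-to-everywhere bookkeeping in (i); the rest is substitution into results already proved.
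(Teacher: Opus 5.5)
Your part (i) is the paper's argument: \eqref{eq:energyL2}, the uniform bound \eqref{eq:kappa}, Lemma~\ref{lem:gronwall} with the constant $a$, and the monotonicity of $E_{\alpha}$. Your continuity argument for passing from a.e.\ $t$ to every $t\in[0,T]$ is correct. It makes explicit what the paper leaves implicit through the regime $f\in\mathcal{D}$ of Remark~\ref{rem:density}.

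For (ii) your primary route differs from the paper's. The paper does not quote Proposition~\ref{prop:boundsA}. It proves the bound directly, exactly as in your alternative: take $\varepsilon=1$ in \eqref{eq:energyH1} and evaluate at $t=T$. The right-hand side is bounded by \eqref{eq:kappa} and \eqref{eq:M1}, and the left-hand side from below via $(T-s)^{\alpha-1}\ge T^{\alpha-1}$.

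Quoting is legitimate, since the paper itself notes that Proposition~\ref{prop:boundsA} holds under \Hthreep. It even gives a constant depending on the source only through $\norm{f}_{\LLtwo}$, but it imports the $2T$ detour and its constant. The direct computation is self-contained and uses the uniform bound just proved. It also pays off later: its right-hand-side estimate holds at every $t$, not only at $T$. Proposition~\ref{prop:compactB} reuses it verbatim to verify \eqref{eq:LL1} with $p=2$, which the time-averaged bound $M_{2}$ does not provide.

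One correction to your aside. Integrating \eqref{eq:integralform} over $(0,T)$ does not avoid the detour. It reproduces the weight $(T-s)^{\alpha}$ of \eqref{eq:weightedT}, which vanishes at $s=T$. Only the pointwise evaluation at $t=T$ avoids the detour, because there the kernel $(T-s)^{\alpha-1}$ is bounded below instead.
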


\begin{proof}
\emph{Step 1: the inequality is already of Gr\"onwall type.} Under \Hthreep{},
Proposition~\ref{prop:threshold} bounds the source term of \eqref{eq:energyL2} uniformly in $t$,
and with $\norm{g_{n}}_{\LL}\le\norm{g}_{\LL}$ from \eqref{eq:gnconv} that inequality becomes
\[
\norm{u_{n}(t)}^{2}_{\LL}\le
\underbrace{\norm{g}^{2}_{\LL}
+\frac{1}{\varepsilon}\kappa_{\alpha}\!\left(\tfrac r2,T\right)\norm{f}^{2}_{\LLr}}_{=:a}
+\varepsilon\bigl(\IL{\alpha}\norm{u_{n}}^{2}_{\LL}\bigr)(t),\qquad t\in[0,T],
\]
which is \eqref{eq:gronwall-hyp} with $\gamma=\alpha$, coefficient $\varepsilon$ and independent
term the constant $a$, which is in particular non-decreasing, as Lemma~\ref{lem:gronwall}
requires. No integration in time is needed, and that is the difference with the previous
case.

\emph{Step 2: Gr\"onwall.} Lemma~\ref{lem:gronwall} with $y=\norm{u_{n}(\cdot)}^{2}_{\LL}$ gives
$\norm{u_{n}(t)}^{2}_{\LL}\le a\,E_{\alpha}(\varepsilon t^{\alpha})$; since $E_{\alpha}$ has
positive coefficients it increases on $[0,\infty)$, so that the supremum over $[0,T]$ is bounded
by $a\,E_{\alpha}(\varepsilon T^{\alpha})$, which is \eqref{eq:M1}.

\emph{Step 3: the bound in $\HH$.} We take $\varepsilon=1$ in \eqref{eq:energyH1} and evaluate at
$t=T$, which is licit because in the regime $f\in\mathcal{D}$ of Remark~\ref{rem:density} both
sides are continuous on $[0,T]$. On the right, the first summand is at most $\norm{g}^{2}_{\LL}$,
the second at most $\kappa_{\alpha}(r/2,T)\norm{f}^{2}_{\LLr}$ by
Proposition~\ref{prop:threshold}, and the third we treat by \eqref{eq:M1} together with
\[
\bigl(\IL{\alpha}\norm{u_{n}}^{2}_{\LL}\bigr)(T)
\le\frac{\sup_{s}\norm{u_{n}(s)}^{2}_{\LL}}{\Gamma(\alpha)}\int_{0}^{T}(T-s)^{\alpha-1}ds
=\frac{T^{\alpha}}{\Gamma(\alpha+1)}\sup_{s}\norm{u_{n}(s)}^{2}_{\LL}.
\]
On the left, since $\alpha<1$ the function $x\mapsto x^{\alpha-1}$ decreases, so
$(T-s)^{\alpha-1}\ge T^{\alpha-1}$ for $s\in(0,T)$ and therefore
\[
\bigl(\IL{\alpha}\norm{u_{n}}^{2}_{\HH}\bigr)(T)\ge\frac{T^{\alpha-1}}{\Gamma(\alpha)}
\int_{0}^{T}\norm{u_{n}(s)}^{2}_{\HH}ds .
\]
Collecting the two sides, we obtain
\[
\frac{2a_{0}T^{\alpha-1}}{\Gamma(\alpha)}\int_{0}^{T}\norm{u_{n}(s)}^{2}_{\HH}ds
\le\norm{g}^{2}_{\LL}+\kappa_{\alpha}\!\left(\tfrac r2,T\right)\norm{f}^{2}_{\LLr}
+\frac{T^{\alpha}}{\Gamma(\alpha+1)}M_{1}(1),
\]
and solving for the integral we obtain the second bound, the factor on the left being a positive
constant independent of $n$.
\end{proof}

\subsection{Compactness with the exponent \texorpdfstring{$p=2$}{p=2}}

\begin{proposition}[Verification of \eqref{eq:LL1} under \Hthreep]\label{prop:compactB}
Let $\alpha\in(0,1)$, assume \Hthreep, and take the choices \eqref{eq:triple}. Then
\eqref{eq:LL1} holds with $p=2$.
\end{proposition}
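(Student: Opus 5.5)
The plan is to read \eqref{eq:LL1} with $p=2$ and $X_{0}=\HH$ directly off the second energy inequality \eqref{eq:energyH1}. The uniform bound \eqref{eq:M1} of Proposition~\ref{prop:boundsB} controls the only term on its right-hand side that involves the unknown. In other words, the quantity that Proposition~\ref{prop:compactA} had to reach through H\"older and a window of exponents is here already the left-hand side of an inequality we possess. No H\"older step is needed, and so no restriction on $\alpha$ appears.

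Concretely, take $\varepsilon=1$ in \eqref{eq:energyH1}, which holds for a.e.\ $t\in(0,T)$ and every $f\in\LLr$ by Lemma~\ref{lem:dens}(iii) and Remark~\ref{rem:denswhere}. Bound the three terms on the right uniformly in $t$ and $n$:
\begin{enumerate}[label=\textup{(\alph*)},leftmargin=2.4em]
\item the datum term by $\norm{g_{n}}^{2}_{\LL}\le\norm{g}^{2}_{\LL}$, from \eqref{eq:gnconv};
\item the source term by $\kappa_{\alpha}(r/2,T)\norm{f}^{2}_{\LLr}$, from \eqref{eq:kappa}; this is exactly where $r>2/\alpha$ enters;
\item the term in the unknown by $\sup_{s}\norm{u_{n}(s)}^{2}_{\LL}\,T^{\alpha}/\Gamma(\alpha+1)\le T^{\alpha}M_{1}(1)/\Gamma(\alpha+1)$, using \eqref{eq:M1} and $\IL{\alpha}1=t^{\alpha}/\Gamma(\alpha+1)\le T^{\alpha}/\Gamma(\alpha+1)$.
\end{enumerate}
Dividing by $2a_{0}$ then gives
\[
\bigl(\IL{\alpha}\norm{u_{n}}^{2}_{\HH}\bigr)(t)\le C_{1}:=\frac{1}{2a_{0}}\Bigl(\norm{g}^{2}_{\LL}+\kappa_{\alpha}\!\left(\tfrac r2,T\right)\norm{f}^{2}_{\LLr}+\frac{T^{\alpha}M_{1}(1)}{\Gamma(\alpha+1)}\Bigr),
\]
and the right-hand side depends neither on $t$ nor on $n$. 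This is \eqref{eq:LL1} with $p=2$, $\gamma=\alpha$ and the choices \eqref{eq:triple}.

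The only delicate point is that the inequality comes a priori for almost every $t$, whereas \eqref{eq:LL1} is a supremum over $t\in(0,T)$. I would settle this in two ways.
\begin{enumerate}[label=\textup{(\roman*)},leftmargin=2.4em]
\item In the regime $f\in\mathcal{D}$ of Remark~\ref{rem:density}, $u_{n}$ is absolutely continuous with values in the finite-dimensional $V_{n}$. Hence $\norm{u_{n}(\cdot)}^{2}_{\HH}$ is bounded, its fractional integral is continuous on $[0,T]$, and the bound holds at every $t$.
\item For general $f\in\LLr$, take $f_{m}\in\mathcal{D}$ with $f_{m}\to f$ in $\LLr$. Then $u^{m}_{n}\to u_{n}$ in $C\bigl([0,T];V_{n}\bigr)$ by the $C([0,T])$ branch of Lemma~\ref{lem:modes}(iii), which is available since $r>1/\alpha$. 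Because the norms on $V_{n}$ are equivalent, $\norm{u^{m}_{n}(s)}^{2}_{\HH}\to\norm{u_{n}(s)}^{2}_{\HH}$ uniformly in $s$, so $\IL{\alpha}\norm{u^{m}_{n}}^{2}_{\HH}$ converges pointwise at every $t$ by \eqref{eq:RLbounded} with $p=\infty$. Meanwhile the constant $C_{1}(f_{m})$ converges to $C_{1}(f)$. The bound therefore passes to the limit at every $t$.
\end{enumerate}

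Since $C_{1}$ is independent of $n$, the supremum over $v$ in the family $\{u_{n}\}$ is finite, which completes the verification.
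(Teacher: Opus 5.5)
Your proposal is correct and follows the paper's proof: you take $\varepsilon=1$ in \eqref{eq:energyH1}, bound the three right-hand terms exactly as in Step~3 of Proposition~\ref{prop:boundsB} via \eqref{eq:gnconv}, \eqref{eq:kappa} and \eqref{eq:M1}, and divide by $2a_{0}$, which gives the same constant. Your extra passage from a.e.\ $t$ to every $t$, by continuity in the regime $f\in\mathcal{D}$ and then the $C\bigl([0,T];V_{n}\bigr)$ branch of Lemma~\ref{lem:modes}(iii), spells out what the paper leaves to the density convention of Remark~\ref{rem:density}, and it is sound.
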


\begin{proof}
With the uniform bound \eqref{eq:M1} available we no longer need the detour of the previous
section. We take $\varepsilon=1$ in \eqref{eq:energyH1} and bound its right-hand side as in
Step 3 above:
\[
\bigl(\IL{\alpha}\norm{u_{n}}^{2}_{\HH}\bigr)(t)\le\frac{1}{2a_{0}}
\left(\norm{g}^{2}_{\LL}+\kappa_{\alpha}\!\left(\tfrac r2,T\right)\norm{f}^{2}_{\LLr}
+\frac{T^{\alpha}}{\Gamma(\alpha+1)}M_{1}(1)\right),
\]
uniformly in $t$ and in $n$, which is \eqref{eq:LL1} with $p=2$. No condition on $\alpha$ appears
anywhere in this computation.
\end{proof}

\subsection{Proofs}

\begin{proof}[Proof of Theorem~\ref{thm:B}]
Let $\alpha\in(0,1)$. Hypothesis (a) of Proposition~\ref{prop:package} is the second bound of
Proposition~\ref{prop:boundsB}, hypothesis (b) with $p=2$ is Proposition~\ref{prop:compactB}, and
the hypothesis \Hthree{} that Proposition~\ref{prop:package} requires holds by
Proposition~\ref{prop:hierarchy}(i). We therefore obtain a subsequence and a weak solution
$u\in\WscaleG$, unique by Theorem~\ref{thm:uniqueness}. This is (i).

For the additional regularity, \eqref{eq:M1} is uniform in $n$ and $\LinfL$ is the dual of
$L^{1}\bigl((0,T);\LL\bigr)$, so that bounded sets there are weak-$*$ sequentially compact: a
further subsequence converges weakly-$*$ in $\LinfL$, and its limit is $u$ by uniqueness of
limits in $\mathcal{D}'\bigl((0,T);\LL\bigr)$. Lower semicontinuity of the norm with respect to
weak-$*$ convergence then gives
\[
\norm{u}^{2}_{\LinfL}\le\liminf_{n}\ \sup_{t\in[0,T]}\norm{u_{n}(t)}^{2}_{\LL}
\le M_{1}(\varepsilon)
\]
for every $\varepsilon>0$, which is (ii) with the bound \eqref{eq:M1}.

For (iii), the three convergences hold for the whole sequence by the argument used in
Theorem~\ref{thm:A}: every subsequence of $\{u_{n}\}$ satisfies the same uniform bounds, so
Proposition~\ref{prop:package} with $p=2$, and the weak-$*$ extraction just performed, produce
out of it a further subsequence converging to a weak solution, which is $u$ by
Theorem~\ref{thm:uniqueness}.
\end{proof}

\begin{proof}[Proof of Theorem~\ref{thm:stability}]
By Corollary~\ref{cor:linear}, $W$ is the weak solution for the data $(\tilde g,\tilde f)$ and
coincides with the function constructed above for those data. Bound \eqref{eq:M1} holds uniformly
in $n$ and passes to the limit exactly as in the proof of Theorem~\ref{thm:B}, so
\[
\norm{W}^{2}_{\LinfL}\le E_{\alpha}\bigl(\varepsilon T^{\alpha}\bigr)
\Bigl(\norm{\tilde g}^{2}_{\LL}
+\tfrac{1}{\varepsilon}\kappa_{\alpha}\!\left(\tfrac r2,T\right)
\norm{\tilde f}^{2}_{\LLr}\Bigr).
\]
Bounding the bracket by its largest coefficient and using
$\sqrt{x^{2}+y^{2}}\le x+y$ for $x,y\ge0$, we obtain \eqref{eq:stability}.
\end{proof}

\begin{proof}[Proof of Corollary~\ref{cor:hadamard}]
Existence is Theorem~\ref{thm:B}, uniqueness Theorem~\ref{thm:uniqueness}, continuous dependence
Theorem~\ref{thm:stability}.
\end{proof}

\section{Comparison of the two cases}\label{sec:comparison}

With both developments complete we can set the two cases side by side. We start the comparison
where Section~\ref{sec:relation} left it,
\[
\boxed{\ \Hthreep\Longrightarrow\Hthree\ },
\]
so that \Hthree{} is the weaker hypothesis and \Hthreep{} the stronger: every source admitted by
the second is admitted by the first, and by Proposition~\ref{prop:hierarchy}(ii) not conversely.

\medskip
\noindent
\begin{tabular}{@{}>{\raggedright\arraybackslash}p{0.235\textwidth}>{\raggedright\arraybackslash}p{0.35\textwidth}>{\raggedright\arraybackslash}p{0.35\textwidth}@{}}
\toprule
 & \Hthree{} (weaker) & \Hthreep{} (stronger)\\
\midrule
Condition on $f$ & $f\in\LLtwo$ & $f\in\LLr$, $r>2/\alpha$; implies the condition on the left\\[4pt]
Control of $\bigl(\IL{\alpha}\norm{f}^{2}_{\LL}\bigr)(t)$ & none: no such constant exists
(Proposition~\ref{prop:nobound}) & $\le\kappa_{\alpha}(r/2,T)\norm{f}^{2}_{\LLr}$, uniformly in
$t$ (Proposition~\ref{prop:threshold})\\[4pt]
Gr\"onwall applied to & the primitive $Y_{n}$, after integrating in $t$ &
$\norm{u_{n}(t)}^{2}_{\LL}$ itself\\[4pt]
Uniform bounds obtained & \eqref{eq:N1} and \eqref{eq:M2A}, averaged in $t$ & \eqref{eq:M1},
uniform in $t$, and the $\HH$ bound of Proposition~\ref{prop:boundsB}\\[4pt]
Compactness exponent & $p\in[1,2\alpha)$, non-empty iff $\alpha>1/2$ & $p=2$, for every
$\alpha\in(0,1)$\\[4pt]
Convergence of the approximations & strongly in $L^{p}\bigl((0,T);\LL\bigr)$ for every $p<2$
(Remark~\ref{rem:window}); weakly in $\LHone$ & strongly in $\LLtwo$; weakly in $\LHone$;
weakly-$*$ in $\LinfL$\\[4pt]
Range of $\alpha$ & $(1/2,\,1)$ & all of $(0,1)$\\[4pt]
Regularity obtained & $\WscaleG\cap\LinfL$; the second summand by the mode representation
(Theorem~\ref{thm:A}(iii)), not by the energy route & $\WscaleG\cap\LinfL$, with the explicit
constant \eqref{eq:M1}\\[4pt]
Stability & not reached along this route & \eqref{eq:stability}, constant written out\\
\bottomrule
\end{tabular}

\medskip
The hypotheses are ordered; the theorems are not. Theorem~\ref{thm:A} assumes strictly less of
the source, and what it loses is the segment $\alpha\in(0,\frac12]$. It does not lose the bound
in $\LinfL$: the energy route of Section~\ref{sec:caseA} yields none, but the mode representation
does, by Theorem~\ref{thm:A}(iii). Neither statement
contains the other, because neither set of admissible pairs $(\alpha,f)$ contains the other: the
first covers a source in $\LLtwo$ with no better integrability in time, which the second does
not, and the second covers an order $\alpha\le\frac12$, which the first does not. Where both
apply the second adds the explicit stability constant and nothing else.

What differs between the two columns is the norm in which we bound the approximating sequence,
and the difference propagates to the limit. Under the weaker
hypothesis the compactness criterion can be applied only with an exponent $p<2\alpha\le2$, so
that what it delivers directly is strong convergence in $L^{p}\bigl((0,T);\LL\bigr)$, in the
worst case merely in $L^{1}$; this already suffices for Lemma~\ref{lem:identification}, since
$\RLD\psi$ belongs to every $L^{q}\bigl((0,T);\HH\bigr)$, and interpolation against the energy
bound then upgrades it to every exponent below $2$, by Remark~\ref{rem:window}. Under the
stronger hypothesis the criterion applies with the exponent $2$ outright and the weak-$*$ bound
is preserved.

\subsection*{Classification of the thresholds}

A condition is a \emph{restriction of the method} if the conclusion holds without it. The cut $\alpha>\frac12$ of Theorem~\ref{thm:A} is of this
kind. Proposition~\ref{prop:compactA} shows that the compactness criterion, fed with the only
bound the weaker hypothesis yields, admits no exponent for $\alpha\le\frac12$; but existence
holds for every $\alpha\in(0,1)$ under \Hthree, indeed under less \cite{zacher09}, by an argument
that needs no compactness because the equation is linear. What we obtain in exchange from the
compactness route is the strong convergence \eqref{eq:strongconv}, and $\alpha>\frac12$ is its
price. The same label fits the exponent $2/\alpha$ of \Hthreep: Proposition~\ref{prop:threshold}
shows it to be exactly what the energy argument consumes, and Proposition~\ref{prop:spectralbound} reaches the same conclusion from the weaker requirement
$q>1/\alpha$ by a different route.

A condition marks a \emph{frontier of the problem} when the conclusion is false without it. The
threshold $\alpha=\frac12$ in the regularity \eqref{eq:extrareg} under \Hthree{} is of this kind,
and we prove it in Section~\ref{sec:sharp}: for every $\alpha\in(0,\frac12]$ there is a source
satisfying \Hthree{} whose unique weak solution leaves $\LinfL$. No refinement of any argument
can recover the conclusion there, because there is nothing to recover.

The value $\alpha=\frac12$ occurs at three places in this paper, with a different meaning at
each. In Remark~\ref{rem:pairunique} it separates the Riemann--Liouville from the Caputo reading
of $\partial^{\alpha}_{t}$; it concerns the representation of the derivative, and it is why the
scale is defined as it is. In Theorem~\ref{thm:A} it is where the window $[1,2\alpha)$ of
Proposition~\ref{prop:compactA} closes, and it comes from the construction. In
Proposition~\ref{prop:sharp} it is where the regularity \eqref{eq:extrareg} ceases to hold, and
it is a property of the equation. That the three numbers agree is a coincidence of the exponents
involved. The loss of the segment $\alpha\in(0,\frac12]$ in Theorem~\ref{thm:A} is
therefore not a property of the problem, and the failure of \eqref{eq:extrareg} under \Hthree{}
for $\alpha\le\frac12$ is not a technical gap awaiting a sharper estimate.

\section{Optimality of the threshold \texorpdfstring{$\alpha=\frac12$}{alpha=1/2}}
\label{sec:sharp}

The energy method cannot settle whether the regularity \eqref{eq:extrareg}, which the energy
route does not deliver under \Hthree, fails because of the route or because it is false. The mode
representation of Section~\ref{sec:spectral} settles it: $\alpha=\frac12$ is the exact frontier,
the regularity holding above that value and failing below it and at it.

\begin{proposition}[Optimality of $\alpha=\frac12$]\label{prop:sharp}
Assume \Hone{} and \Htwo, and let $g=0$. Then
\begin{enumerate}[label=\textup{(\roman*)},leftmargin=2.4em]
\item if $\alpha\in\bigl(\frac12,1\bigr)$ and $f\in\LLtwo$, then the weak solution satisfies
$u\in\LinfL$;
\item if $\alpha\in\bigl(0,\frac12\bigr]$, then there is $f\in\LLtwo$, of the form
$f(x,t)=F(t)w_{1}(x)$, whose weak solution does not belong to $\LinfL$.
\end{enumerate}
\end{proposition}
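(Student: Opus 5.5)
Part (i) is a direct application of results already in hand. For $\alpha\in(\frac12,1)$ and $f\in\LLtwo$, Theorem~\ref{thm:A} gives a weak solution as the limit of the whole Galerkin sequence, weakly in $\LHone$, and Theorem~\ref{thm:uniqueness} makes it the weak solution. Proposition~\ref{prop:spectralbound} with $q=2$ is admissible because $2>1/\alpha$, and it gives $u\in\LinfL$ with the bound \eqref{eq:LinfA} at $g=0$. Nothing new is needed.

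For (ii) I would work with a single mode. Take $f(x,t)=F(t)w_{1}(x)$ with $F\ge0$ in $L^{2}(0,T)$, to be chosen.

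First I identify the weak solution. The function $u(x,t):=c(t)w_{1}(x)$, with $c=k_{1}*F$ as in \eqref{eq:modesofu} and $g=0$, is the Galerkin limit. Every $u_{n}$ equals $u$ for $n\ge1$, because $F_{j}=0$ for $j\ge2$ and the initial datum vanishes. Then $u\in\LHone$ since $c\in L^{2}$ by Young's inequality. Lemma~\ref{lem:dens}(ii) gives \eqref{eq:ibp-n} directly for $u_{1}=u$, and passing it through \eqref{eq:galerkinweak} with $\phi_{n}$ replaced by the projection onto $w_{1}$ shows that \eqref{eq:weak} holds. Alternatively, test \eqref{eq:weak} against $\zeta w_{j}$ as in the proof of Theorem~\ref{thm:uniqueness}: the $j$-th coefficient satisfies the Volterra equation \eqref{eq:volterra}, so $b_{j}=0$ for $j\ge2$ and $b_{1}=k_{1}*F$ by Lemma~\ref{lem:scalar}(i)--(ii). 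By Theorem~\ref{thm:uniqueness} this is the weak solution. I expect to phrase it this second way, since it avoids any appeal to existence under \Hthree{} for $\alpha\le\frac12$. Either way $\norm{u(t)}_{\LL}=\abs{c(t)}$, so it suffices to make $c$ essentially unbounded.

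Next I use the lower kernel bound in \eqref{eq:kernelbounds}: $c(t)\ge\theta_{1}\int_{0}^{t}(t-s)^{\alpha-1}F(s)\,ds=\theta_{1}\Gamma(\alpha)\bigl(\IL{\alpha}F\bigr)(t)$ for $F\ge0$. It remains to find $0\le F\in L^{2}(0,T)$ with $\IL{\alpha}F$ essentially unbounded near some $t_{0}$. For $\alpha<\frac12$ take $F(s)=(t_{0}-s)^{-\beta}\mathbf 1_{(0,t_{0})}(s)$ with $\alpha\le\beta<\frac12$. Then $F\in L^{2}$, while $\int_{0}^{t_{0}}(t_{0}-s)^{\alpha-1-\beta}ds=\infty$. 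The borderline $\alpha=\frac12$ is the expected obstacle, since no pure power works there. For it I would use the logarithmic profile \eqref{eq:endpointh} of Remark~\ref{rem:endpoint}: $F:=h$, which lies in $L^{1/\alpha}=L^{2}$, and whose fractional integral at $t_{0}$ diverges. This choice also covers every $\alpha\le\frac12$ uniformly, since $h\in L^{1/\alpha}\subset L^{2}$ when $1/\alpha\ge2$.

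Finally I upgrade divergence at $t_{0}$ to essential unboundedness. Fatou's lemma, exactly as in Remark~\ref{rem:endpoint}, gives $\bigl(\IL{\alpha}F\bigr)(t)\to\infty$ as $t\uparrow t_{0}$. Hence $\abs{c}$ exceeds any bound on a set of positive measure to the left of $t_{0}$, and $u\notin\LinfL$. I would take $t_{0}=T$ or $t_{0}<T$ indifferently. The main care is in this last step and in checking that $c$ is the a.e. representative of the weak solution, which the mode-by-mode uniqueness provides.
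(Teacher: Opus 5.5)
Your part (i) matches the paper. So does your first identification of the weak solution, Lemma~\ref{lem:dens}(ii) applied to $u_1=c\,w_1$: the paper approximates $F$ by smooth $F_m$ and passes to the limit in \eqref{eq:weak}, which is exactly the density argument that Lemma~\ref{lem:dens}(ii) packages.

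You genuinely differ in how you produce $F$. The paper uses the lower bound in \eqref{eq:kernelbounds} only to get $k_1\notin L^{2}(0,T)$, and then argues non-constructively. If $k_1*F$ were bounded for every $F\in L^2$, the closed graph theorem would make $F\mapsto k_1*F$ bounded from $L^{2}$ to $L^{\infty}$, and the truncations $\min\{k_1(T-\cdot),m\}$ contradict that. You instead use the lower bound pointwise, $k_1*F\ge\theta_1\Gamma(\alpha)\,\IL{\alpha}F$ for $F\ge0$. This reduces the question to the failure of $\IL{\alpha}:L^{2}\to L^{\infty}$, which you exhibit with the profile \eqref{eq:endpointh} and Fatou's lemma. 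Both arguments are correct. The paper's needs no explicit profile and isolates $k_1\notin L^{2}$ as the whole obstruction. Yours gives an explicit source and a located blow-up, $\norm{u(t)}_{\LL}\to\infty$ as $t\uparrow t_0$. It also gives more: since your $h$ lies in $L^{1/\alpha}(0,T)$, for $\alpha\le\frac12$ the same $f=h\,w_1$ lies in $L^{r}\bigl((0,T);\LL\bigr)$ for every $r\le1/\alpha$. So the exponent $q>1/\alpha$ of Proposition~\ref{prop:spectralbound} is sharp there, a point the paper's conclusions leave open.

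One correction concerns the variant you say you would actually write up, testing \eqref{eq:weak} against $\zeta w_j$. That is only the uniqueness half. It shows that \emph{any} weak solution has coefficients $b_1=k_1*F$ and $b_j=0$ for $j\ge2$, but it presupposes that a weak solution exists and does not show that $c\,w_1$ is one. So it does not avoid existence under \Hthree{} for $\alpha\le\frac12$; it requires it, and Theorem~\ref{thm:A} gives existence only for $\alpha>\frac12$. Your first variant is the one that supplies existence, because it verifies \eqref{eq:weak} for $c\,w_1$ directly.

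If you want to keep the second phrasing, add the converse direction. For $u=c\,w_1$, $f=F\,w_1$, $g=0$ and $\zeta:=(w_1,\phi(\cdot))_{\LL}$, identity \eqref{eq:weak} reduces to $\int_0^T c\,\RLD\zeta\,dt+\lambda_1\int_0^T c\,\zeta\,dt=\int_0^T F\zeta\,dt$. This follows from $c=\IL{\alpha}(F-\lambda_1c)$ by Lemma~\ref{lem:RLprops}(iv), together with $\IR{\alpha}\RLD\zeta=\zeta$, which comes from \eqref{eq:RLDrep} and the semigroup property. The same identity shows that $\bigl((F-\lambda_1c)w_1,0\bigr)$ represents $\partial^{\alpha}_{t}u$, so $u\in W^{\alpha}_{2,2}(0)$.
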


\begin{proof}
(i) is Theorem~\ref{thm:A}(iii) with $g=0$, restated here for the comparison with (ii); the
exponent $2$ is admissible in Proposition~\ref{prop:spectralbound} exactly because
$\alpha>\frac12$.

(ii) We write $k:=k_{1}$ and $\theta:=\theta_{1}>0$. If $\alpha\le\frac12$ then
$2\alpha-2\le-1$, so the lower bound in \eqref{eq:kernelbounds} gives
\[
\int_{0}^{T}k(\sigma)^{2}d\sigma\ge\theta^{2}\int_{0}^{T}\sigma^{2\alpha-2}d\sigma=\infty,
\]
that is $k\notin L^{2}(0,T)$. Suppose now, for contradiction, that $k*F\in L^{\infty}(0,T)$ for
every $F\in L^{2}(0,T)$. The operator $SF:=k*F$ is then defined on all of $L^{2}(0,T)$, linear from $L^{2}(0,T)$ to
$L^{\infty}(0,T)$, and closed: if $F_{m}\to F$ in $L^{2}$ and $SF_{m}\to G$ in $L^{\infty}$, then
Young's inequality gives $SF_{m}\to SF$ in $L^{1}$, so $G=SF$ a.e. By the closed graph theorem
there is $M$ with $\norm{k*F}_{L^{\infty}}\le M\norm{F}_{L^{2}}$ for every $F$. We test this on
$F_{m}(s):=\min\{k(T-s),m\}$, which lies in $L^{2}\cap L^{\infty}$. Since $k>0$, the change
$\sigma=T-s$ gives
\[
(k*F_{m})(T)=\int_{0}^{T}k(\sigma)\min\{k(\sigma),m\}d\sigma
\ge\int_{0}^{T}\min\{k(\sigma),m\}^{2}d\sigma=\norm{F_{m}}^{2}_{L^{2}},
\]
and $k*F_{m}$ is continuous, being the convolution of an $L^{1}$ function with an $L^{\infty}$
one, so $\norm{k*F_{m}}_{L^{\infty}}\ge\norm{F_{m}}^{2}_{L^{2}}$ and
\[
M\ge\frac{\norm{k*F_{m}}_{L^{\infty}}}{\norm{F_{m}}_{L^{2}}}\ge\norm{F_{m}}_{L^{2}}
\xrightarrow[m\to\infty]{}\norm{k}_{L^{2}(0,T)}=\infty
\]
by monotone convergence, a contradiction. There is therefore $F\in L^{2}(0,T)$ with
$k*F\notin L^{\infty}(0,T)$.

We set $f(x,t):=F(t)w_{1}(x)$, which satisfies \Hthree, and $g=0$, which satisfies \Hfour. We
claim that $u(t):=(k*F)(t)w_{1}$ is the weak solution for these data. That $u\in\LHone$ follows
from Young's inequality, $k$ being in $L^{1}(0,T)$ and $F$ in $L^{2}(0,T)$. To identify it, we
approximate: we choose $F_{m}\in C^{\infty}_{0}(0,T)$ with $F_{m}\to F$ in $L^{2}(0,T)$ and set
$f_{m}:=F_{m}w_{1}\in\mathcal{D}$. For this source every mode with $j\ge2$ vanishes, so by
Lemma~\ref{lem:modesofu} the Galerkin approximations are $u^{(m)}=(k*F_{m})w_{1}$ for every
$n\ge1$; by Lemma~\ref{lem:modes}(ii) this function is absolutely continuous and satisfies
\eqref{eq:galerkin} pointwise with
$\partial^{\alpha}_{t}u^{(m)}=(F_{m}-\lambda_{1}k*F_{m})w_{1}$, so the integration by parts
\eqref{eq:intbyparts} turns \eqref{eq:galerkin} into the weak formulation \eqref{eq:weak} with
data $(0,f_{m})$. Letting $m\to\infty$, Young's inequality gives $k*F_{m}\to k*F$ in
$L^{2}(0,T)$, hence $u^{(m)}\to u$ in $\LHone$ and
$\partial^{\alpha}_{t}u^{(m)}\to(F-\lambda_{1}k*F)w_{1}$ in $\LHmone$, while $f_{m}\to f$ in
$\LLtwo$; every term of \eqref{eq:weak} is linear and continuous under these convergences, so
that the identity passes to the limit. Hence $u\in\WscaleG$ is a weak solution and, by
Theorem~\ref{thm:uniqueness}, the weak solution. Since $\norm{w_{1}}_{\LL}=1$ we have
$\norm{u(t)}_{\LL}=\abs{(k*F)(t)}$, which is not in $L^{\infty}(0,T)$.
\end{proof}

\begin{remark}[Comparison with the single-phase-lag equation]\label{rem:spl}
Proposition~\ref{prop:sharp} may appear to be in tension with results available for neighbouring
equations. For the fractional single-phase-lag model, Maes and
Van Bockstal \cite{maes} obtain a weak solution in
$L^{\infty}\bigl((0,T);\HH\bigr)\cap C\bigl([0,T];\LL\bigr)$ under $F\in\LLtwo$, which is our
\Hthree{} verbatim, and for every $\alpha\in(0,1)$, with no threshold on the order. Their
equation is not \eqref{eq:P}, however: besides the fractional derivative it carries the classical
term $\rho c\,\partial_{t}u$ with $\rho c>0$, and their estimate is obtained by testing with
$\partial_{t}u$, which that term makes possible. The uniform bound is supplied there by the
first-order derivative and not by the fractional one.

For \eqref{eq:P} no such test function is available: $\partial_{t}u$ need not belong to
$\LLtwo$, let alone to $\HH$, and every route to a pointwise value in $t$ must pass through
$\IL{\alpha}$, whose mapping property into $L^{\infty}$ fails at the exponent $2$ for every
$\alpha<\frac12$, by Proposition~\ref{prop:nobound}, and fails at $\alpha=\frac12$ as well,
which Proposition~\ref{prop:sharp}(ii) establishes by a different argument since the
concentration family does not reach that endpoint. The bound is missing not because our argument
is weak, but because the equation has no term able to produce it.
\end{remark}

\subsection*{Comparison with other approaches}

Two other approaches meet the same exponent. Floridia, G\"olgeleyen and Yamamoto \cite{floridia}
construct strong solutions of $\partial^{\alpha}_{t}(u-a)=Au+F$ in a Banach space $X$ under
$F\in L^{q}(0,T;X)$. The condition their proof consumes is written there as a restriction on the
fractional power in which the initial datum is taken, and for a datum in $X$ itself it reduces to
$q>1/\alpha$. The mechanism is ours: the boundedness of
$\IL{\alpha}:L^{p}(0,T)\to L^{\infty}(0,T)$ for $p>1/\alpha$. The factor two in $r>2/\alpha$
reflects only that our estimates are carried out at the level of the squared norm. Their framework
requires $A$ to be sectorial, which for the elliptic realisation forces smooth coefficients and a
smooth boundary, so it does not contain the present setting; but the threshold is the same object
seen twice.

The second line lies further from ours. Kubica, Ryszewska and Zacher \cite{kubicaRZ}
work with De Giorgi--Nash--Moser techniques and prove a weak Harnack inequality and interior
H\"older continuity for $f\in L^{q_{1}}\bigl((0,T);L^{q_{2}}(\Omega)\bigr)$ under the balance
\cite[(H3)]{kubicaRZ}, which we quote in their notation, where $N$ is the dimension, written $d$
above, and $d$ is a slack parameter: for $N\ge2$,
\begin{equation}\label{eq:KRZ}
\frac{p_{0}'}{q_{1}}+\frac{N}{2q_{2}}=1-d,
\qquad
q_{1}\ge\frac{p_{0}'}{1-d},
\qquad
q_{2}\ge\frac{N}{2(1-d)},
\qquad
d\in(0,1),
\end{equation}
with $p_{0}$ the exponent in which the kernel $l$ conjugate to $k$ is integrable. In the
single-order case \cite[Eq.~(3)]{kubicaRZ}, where $k(t)=t^{-\alpha}/\Gamma(1-\alpha)$ and
$l(t)=t^{\alpha-1}/\Gamma(\alpha)$, any $p_{0}<1/(1-\alpha)$ is admissible
\cite[Rem.~1.2]{kubicaRZ}, so $p_{0}'$ may be brought arbitrarily close to $1/\alpha$ from above,
but not to it. Take $q_{2}=2$, the space in which we measure the source. For $N=2$ the spatial
term of \eqref{eq:KRZ} equals $\frac12$, so that $p_{0}'/q_{1}=\frac12-d$; this forces
$d\in(0,\frac12)$, the two side conditions then holding automatically, and
\[
q_{1}=\frac{p_{0}'}{\tfrac12-d}\in\bigl(2p_{0}',\infty\bigr),
\qquad\text{whose infimum is}\qquad
\inf q_{1}=\frac{2}{\alpha} .
\]
Hypothesis \Hthree, which is $q_{1}=2$, lies outside that range. An energy estimate and a Moser
iteration share no technique, and the exponent at which each becomes available is the same number.
The agreement is dimensional, and that is itself informative: in \eqref{eq:KRZ} the choice
$q_{2}=2$ requires $1-d-N/4>0$ and is therefore available only for $N\le3$, the infimum being
$4/\alpha$ when $N=3$. Our $2/\alpha$ carries no dimension, the source being measured in $\LL$ and
no Sobolev embedding in space being used anywhere; theirs carries $N$ because the Moser iteration
uses one, and $N=2$ is where that weight is neutral. Proposition~\ref{prop:spectralbound} shows that for the operator considered here the conclusion
needs only $q>1/\alpha$; we retain \Hthreep{} because
it is the hypothesis that survives when the coefficient depends on time, in the setting of
\cite{kubica}, where the spectral representation is unavailable, and because it produces the
explicit constant of Theorem~\ref{thm:stability}.

\section{Examples}\label{sec:examples}

One example is called for: a function of the solution space whose norm we compute, which shows
that the two summands of that norm measure different things.

\begin{example}[A function of $W^{\alpha}_{2,2}$ and the value of its norm]\label{ex:space}
Let $w_{1}$ be the first eigenfunction of Lemma~\ref{lem:spectral}, normalised by
$\norm{w_{1}}_{\LL}=1$, and let
\[
u(x,t):=\frac{t^{\alpha}}{\Gamma(\alpha+1)}\,w_{1}(x),\qquad (x,t)\in\Omega\times[0,T].
\]
For the first summand of the norm, $u(\cdot,t)\in\HH$ for each $t$ with
$\norm{u(\cdot,t)}_{\HH}=t^{\alpha}\Gamma(\alpha+1)^{-1}\norm{w_{1}}_{\HH}$, so
\[
\norm{u}^{2}_{\LHone}=\frac{\norm{w_{1}}^{2}_{\HH}}{\Gamma(\alpha+1)^{2}}\int_{0}^{T}t^{2\alpha}dt
=\frac{T^{2\alpha+1}}{(2\alpha+1)\Gamma(\alpha+1)^{2}}\norm{w_{1}}^{2}_{\HH}<\infty .
\]
For the second, we write $\varphi(t):=t^{\alpha}/\Gamma(\alpha+1)$. Then
$\varphi'(t)=\alpha t^{\alpha-1}/\Gamma(\alpha+1)$ is integrable because $\alpha-1>-1$, so
$\varphi\in AC([0,T])$ and \eqref{eq:caputo} applies:
\[
\partial^{\alpha}_{t}\varphi=\bigl(\IL{1-\alpha}\varphi'\bigr)(t)
=\frac{\alpha}{\Gamma(1-\alpha)\Gamma(\alpha+1)}\int_{0}^{t}(t-s)^{-\alpha}s^{\alpha-1}ds .
\]
The integral is Euler's integral of the first kind,
$\int_{0}^{t}(t-s)^{p-1}s^{q-1}ds=t^{p+q-1}\Gamma(p)\Gamma(q)/\Gamma(p+q)$
\cite[Sec.~1.1]{podlubny},
which with $p=1-\alpha$ and $q=\alpha$ equals $\Gamma(1-\alpha)\Gamma(\alpha)$. Hence
$\partial^{\alpha}_{t}\varphi=\alpha\Gamma(\alpha)/\Gamma(\alpha+1)=1$, so
$\partial^{\alpha}_{t}u=w_{1}$, constant in $t$, and
$\norm{\partial^{\alpha}_{t}u}^{2}_{\LHmone}=T\norm{w_{1}}^{2}_{\HM}<\infty$. Since $u(0)=0$, the
pair representing $u$ in the sense of Definition~\ref{def:space} is $(w_{1},0)$, and no ambiguity
of the kind discussed in Remark~\ref{rem:pairunique} arises. Therefore $u\in\WscaleG$ with
\[
\norm{u}_{W^{\alpha}_{2,2}}
=\frac{T^{\alpha+1/2}}{\sqrt{2\alpha+1}\,\Gamma(\alpha+1)}\norm{w_{1}}_{\HH}
+\sqrt{T}\,\norm{w_{1}}_{\HM}.
\]
The two summands measure different things and neither controls the other: the first is the
spatial regularity of $u$ averaged in time, the second its fractional regularity in time. To see
that the second is not automatic, we take $v(x,t):=h(t)w_{1}(x)$ with $h\in L^{2}(0,T)$
arbitrary: such a $v$ always lies in $\LHone$, but $\partial^{\alpha}_{t}v$ need not exist, still
less belong to $\LHmone$.
\end{example}

Two other explicit families appear in this paper, and neither is an example in the sense just
used. The family $f_{\varrho}$ of Proposition~\ref{prop:nobound} is a concentration
argument: it proves that no constant controls
$\sup_{t}\bigl(\IL{\alpha}\norm{f}^{2}_{\LL}\bigr)(t)$ by the $\LLtwo$ norm of the source, which
is what forces \Hthreep{} into the picture. The source $f(x,t)=F(t)w_{1}(x)$ of
Proposition~\ref{prop:sharp}(ii) is a counterexample: it turns the failure of a bound into the
failure of a conclusion, and it is the content of the optimality statement. Both belong to the
proofs where they occur, and we do not offer them as illustrations.

\section{Conclusions}\label{sec:conclusions}

For the time-fractional diffusion problem \eqref{eq:P} with $\alpha\in(0,1)$, on a bounded
Lipschitz domain and with a bounded, symmetric, uniformly elliptic coefficient matrix, we have
worked under the
structural hypotheses \Hone{} and \Htwo, the initial datum \Hfour, and the two alternatives
\Hthree{} and \Hthreep{} on the source.

The two source hypotheses are ordered, and in one direction only:
$\Hthreep\Rightarrow\Hthree$, since $\alpha<1$ forces every admissible exponent above $2$ and the
interval is bounded, while the converse fails, as $t^{-\alpha/2}w(x)$ shows
(Proposition~\ref{prop:hierarchy}). Hypothesis \Hthree{} is the weaker, \Hthreep{} the stronger,
and we have arranged the paper accordingly.

Under \Hthree{} we obtain a weak solution in $\WscaleG$ for $\alpha\in(\frac12,1)$
(Theorem~\ref{thm:A}), with the time-averaged bounds \eqref{eq:N1} and \eqref{eq:M2A}, the
uniform bound \eqref{eq:LinfA}, and strong convergence of the Galerkin approximations in
$L^{p}\bigl((0,T);\LL\bigr)$ for every $p<2$. Uniqueness holds under \Hthree{} alone and for
every $\alpha\in(0,1)$ (Theorem~\ref{thm:uniqueness}).

Under \Hthreep{} we obtain a solution for every $\alpha\in(0,1)$, lying in $\LinfL$
with the explicit bound \eqref{eq:M1} (Theorem~\ref{thm:B}) and satisfying the stability estimate
\eqref{eq:stability} with the constant \eqref{eq:Keps} written out, and the problem is well posed
in the sense of Hadamard (Corollary~\ref{cor:hadamard}). What the reinforcement buys is therefore
the range $\alpha\le\frac12$ and the explicit constant, and not the uniform estimate itself,
which the mode representation already supplies above $\frac12$ under \Hthree.

Of the three thresholds, two are features of a method. One is the exponent $2/\alpha$, which
Proposition~\ref{prop:spectralbound} lowers to $1/\alpha$ for this operator. The other is the cut
$\alpha>\frac12$, which comes from the window $[1,2\alpha)$ of admissible exponents in the
compactness criterion, and which a linear equation does not need in order to have a solution. The
third is a feature of the problem: for every $\alpha\in(0,\frac12]$ there is a source satisfying
\Hthree{} whose unique weak solution leaves $\LinfL$ (Proposition~\ref{prop:sharp}). The
numerical coincidence of the last two values is not an identity of content.

Existence under \Hthree{} and uniqueness are due to Zacher \cite{zacher09}, the scale
$W^{\alpha}_{2,2}$ to Kubica and Yamamoto \cite{kubica}, and the compactness criterion to Li and
Liu \cite{liliu}; we claim none of these. Three things are ours. The identification of
$[1,2\alpha)$ as the exact window of admissible exponents, with both halves proved
(Proposition~\ref{prop:compactA}), and hence the origin of the restriction in
Theorem~\ref{thm:A}. The identification of $2/\alpha$ as the exact threshold of the energy route,
with its explicit constant and with the concentration argument of
Proposition~\ref{prop:nobound}, which rules out every exponent below it. And the proof that the
gap between the two conclusions is a feature of the problem.

Two open questions, which are the same frontier seen twice. We do not know whether the gap
between $1/\alpha$ and $2/\alpha$ survives without the spectral representation: whether the sharp
bound \eqref{eq:spectralbound} holds for time-dependent coefficients $A=A(x,t)$, in the setting
of \cite{kubica}, or when $A$ is not symmetric, or when the operator carries lower-order terms.
In these cases the energy route of Sections~\ref{sec:caseA} and~\ref{sec:caseB} goes through
essentially unchanged, since we use the symmetry of $a$ only to diagonalise and never in an
energy estimate. The spectral representation on which Section~\ref{sec:sharp} rests is not
available there. The gap between $1/\alpha$ and $2/\alpha$ would then become operative rather
than methodological, and the optimality statement would have to be reached, if at all, by other
means. We do not know either
whether the counterexample of Proposition~\ref{prop:sharp}(ii) extends from $r=2$ to every
$r\le1/\alpha$; the argument we give uses square integrability of the kernel specifically, and we
have not pursued the general case. Finally there is the semilinear problem, where one cannot
dispense with compactness, since weak convergence is not preserved by composition with nonlinear
functions, and where the cut at $\alpha=\frac12$ would therefore become a restriction of the
result and not only of the proof.

\section*{Acknowledgements}

The authors used a large language model for language editing. All mathematical content, proofs
and results are the authors' own.

\end{document}